\documentclass[11pt]{article}
\usepackage{geometry} 
\usepackage{amsmath,amsthm,amssymb}                
\usepackage{graphicx, color}
\usepackage{amssymb}
\usepackage{epstopdf}
\usepackage{pdfsync}
\usepackage{mathabx}

\newtheorem{definition}{Definition}[section]
\newtheorem{lemma}[definition]{Lemma}
\newtheorem{theorem}[definition]{Theorem}
\newtheorem{proposition}[definition]{Proposition}
\newtheorem{corollary}[definition]{Corollary}
\newtheorem{remark}[definition]{Remark}

\RequirePackage{authblk}

\numberwithin{equation}{section}

\def\e{\varepsilon}

\def\wto{\rightharpoonup}

\title{Homogenization of some viscous Hamilton--Jacobi equations using variational arguments on probability spaces}
\author[1]{Andrea Braides} 
\author[2]{Gianni Dal Maso}
\author[3]{Claude Le Bris}%\footnote{corresponding author, e-mail: {\tt claude.le-bris@enpc.fr}} 
\affil[1]{\small Department of Mathematics, University of Rome Tor Vergata, via della Ricerca Scientifica 1, Rome, Italy {\tt braides@mat.uniroma2.it}}
\affil[2]{\small SISSA, via Bonomea 265, Trieste, Italy {\tt dalmaso@sissa.it}}
\affil[3]{\small \'Ecole Nationale des Ponts et Chauss\'ees, Institut Polytechnique de Paris, CNRS and INRIA MATHERIALS project-team,
77455~Marne-La-Vall\'ee Cedex 2, France,
{\tt claude.le-bris@enpc.fr}}
\date{}
\begin{document} 

\maketitle

\begin{abstract}This work studies the asymptotic behavior of a class of viscous Hamilton--Jacobi equations with oscillatory periodic potentials, already studied by other authors in the PDE framework, and described in terms of an effective Hamiltonian. A novel variational approach is introduced, based on the classical stochastic control representation involving a Brownian motion, which allows the problem to be reformulated in terms of functionals defined on probability spaces. The core result is the identification of a homogenized Lagrangian obtained via a $\Gamma$-convergence procedure adapted to random processes, where filtrations and Brownian motions are treated as variables. The effective Hamiltonian is then recovered as the Legendre transform of this homogenized Lagrangian. The analysis provides both lower and upper bounds by means of blow-up arguments and the construction of stochastic correctors, yielding a complete characterization of the homogenized limit. This approach allows for a stability result with respect to the addition of some localized defects embedded in the periodic background.

{\bf MSC codes:} 49J45, 35F21, 35B40, 35B20, 35B35, 35B27

{\bf Keywords:} Homogenization,  $\Gamma$-convergence, Hamilton--Jacobi equations

\end{abstract}

%\blfootnote{Preprint SISSA  19/2024/MATE}

\def\EE{\mathbb E}

\section{Introduction}
In this work we consider the asymptotic behaviour of the solutions to the oscillatory Hamil\-ton-Jacobi equation \begin{equation}\label{11}
\lambda U_{\e}(x)-\frac\e2 \triangle U_{\e}(x)+H_{\rm per}\Big(\frac x\e,
\nabla U_{\e}(x)\Big)=
\psi(x),
\qquad x\in\mathbb R^d.
\end{equation}
with $\lambda>0$, and the quadratic Hamiltonian
$$
H_{\rm per}(y,p)= \frac12|p|^2-V_{\rm per}(y),
$$ 
where $V_{\rm per}$ is periodic. The functions $V_{\rm per}$ and $\psi$ are supposed sufficiently regular, say Lipschitz continuous, so that, for each $\e>0$, the solution exists and is unique in the sense of viscosity solutions.

As is well known, this asymptotic behaviour can be characterized by an effective {\em homogenized Hamiltonian} $H_{\rm hom}$, for which the solutions   
$U_{\e}$ converge to the viscosity solution of 
\begin{equation}\label{12}
\lambda U(x)+H_{\rm hom}(\nabla U(x))=\psi(x),
\qquad x\in\mathbb R^d.
\end{equation}
as $\e\to 0^+$. In the PDE approach to homogenization, the value of the Hamiltonian $H_{\rm hom}$ at $p$ is characterized by the correctors equation
\begin{equation}\label{uhs}
-\frac12 \triangle w_{p}(x)+H_{\rm per}(x, p+\nabla w_{p}(x))=H_{\rm hom}(p),
\qquad x\in\mathbb R^d;
\end{equation}
that is, the value $H_{\rm hom}(p)$ is the unique constant such that there is a periodic function $w_p$, unique up to addition of a constant, such that  \eqref{uhs} holds in the sense of viscosity solutions (see \cite{MR1159184,PLL-college}, and related work by many authors \cite{Lions-Sou}).

On the other hand, given a probability space $(\Omega, \mathcal F, P)$ and a $d$-dimensional Brownian motion $B_t$ on it,
the solution $U_{\e}$ can also be characterized as the value function
$$
U_{\e}(x)=\inf\Big\{
\int_0^{+\infty} \EE\Big(\frac12 |\dot v(t)|^2+ V_{\rm per} \Big(\frac{\sqrt\e B_t+ v(t)}\e\Big)+\psi(\sqrt\e B_t+ v(t))\Big)\, e^{-\lambda t}dt:  v(0)=x\Big\},
$$
where $\EE$ denotes the expectation with respect to $(\Omega, \mathcal F, P)$.
Here, the infimum is taken over all random functions $v\colon(0,+\infty) \to L^2(\Omega;\mathbb R^d)$ in the corresponding weighted Sobolev space, satisfying the initial condition
and adapted to the filtration corresponding to the Brownian motion (see e.g.~\cite[Section I.8]{MR1484411} and \cite{zbMATH06190335,zbMATH07814183,MR2179357}). It is interesting to remark that the minimum value is independent of the choice of the Brownian motion, since the solution $U_\e$ to equation \eqref{11} is unique.

With this characterization of solutions in mind, we propose a new approach, through a variational homogenization result for the functionals
\begin{equation}\label{fnb}
\int_0^{+\infty} \EE\Big(\frac12 |\dot v(t)|^2+ V_{\rm per} \Big(\frac{\sqrt\e B_t+ v(t)}\e\Big)\Big)\, e^{-\lambda t}dt.
\end{equation}
It is worth noting that a homogenization result for such functionals alone, which is expressed in terms of $\Gamma$-convergence, is not sufficient to study the behaviour of the minimum problems defining $U_\e(x)$, since the additional term 
$$
\int_0^{+\infty} \EE(\psi(\sqrt\e B_t+ v(t)))\, e^{-\lambda t}dt
$$
cannot easily be treated as a continuous perturbation of the main functional \eqref{fnb}.
We adopt a different approach. Using the results established here for  \eqref{fnb}, we characterize $H_{\rm hom}$ as the Legendre transform of a homogenized Lagrangian, for which it is sufficient to consider the case $\psi=0$.  From this we characterize the asymptotic behaviour of $U_\e(x)$ for $\psi\ne 0$ relying on the PDE results in  \cite{MR1159184}.

 \smallskip
We now describe our new type of $\Gamma$-convergence, in which the main conceptual novelty is that filtrations and Brownian motions are treated as optimization variables rather than fixed probabilistic objects.

In this limit analysis, two main issues arise. The first is the definition of a proper convergence of random functions to deterministic functions. The second is the development of technical tools of nonlinear homogenization type that allow one to treat the Brownian motion as a variable to be optimized. 

In order to properly study the homogenization process, we precisely define our functionals as
$$
F^\lambda_\e(v,\mathcal F_t,B_t)= \int_0^{+\infty} \EE\Big(\frac12 |\dot v(t)|^2+ V_{\rm per} \Big(\frac{\sqrt\e B_t+ v(t)}\e\Big)\Big)\, e^{-\lambda t}dt,
$$
explicitly depending also on a  filtration $\{\mathcal F_t\}_{t\ge 0}$ such that $v(t)$ is measurable with respect to $\mathcal F_t$ for every $t$, and  $\{B_t\}_{t\ge 0}$ a $d$-dimensional Brownian motion adapted to $\{\mathcal F_t\}_{t\ge 0}$. The convergence of random variables $v_\e$ to a deterministic function $v$ is then defined as the weak convergence of $v_\e$ towards a random function $\widehat v$, where $\EE(\widehat v)=v$, and the $\Gamma$-limit is defined with respect to this convergence  as $\e\to0$.
The main result is that this limit can be written in terms of a {\em deterministic  homogenized Lagrangian}  $f_{\rm hom}$ as
$$
F^\lambda_{\rm hom}(v)=\int_0^{+\infty} f_{
\rm hom}(\dot v(t))\, e^{-\lambda t}dt.
$$
The effective Lagrangian is defined by a variant of the usual asymptotic homogenization formula,
\begin{eqnarray}\label{fomma}\nonumber
&& f_{\rm hom}(\xi)=\liminf_{T\to+\infty} \frac1T\min\bigg\{\int_0^T \EE\big(\tfrac12|\dot w(t)|^2+V_{\rm per}(B_t+ w(t))\big)dt:
\\
&&\ \hskip6.7cm\  \EE(w(T, \cdot)-w(0,\cdot))=T\xi \Big\},
\end{eqnarray}
where the infimum is taken over all filtrations $\mathcal F_t$, Brownian motions $B_t$ adapted to $\mathcal F_t$, and functions $ w\in H^1((0,T);L^2(\Omega; \mathbb R^d))$, adapted to $\mathcal F_t$. This weak formulation is used in Stochastic control (see \cite{zbMATH06042622} and \cite[Chapter 2, Section 4]{zbMATH01325009}). 
The solutions (functions, filtrations, and Brownian motions) to the minimum problems in the definition of $f_{\rm hom}$ can then be used to construct random (almost) correctors, by concatenating not only the functions $v$ as usual,  but also the corresponding filtrations and Brownian motions. This concatenation is allowed by the possibility of choosing independent minimizing Brownian motions for the same minimum problem. 

The variational approach also allows us to prove that, when the periodic potential $V_{\rm per}$ is perturbed by adding some continuous non-negative function $V_0$ vanishing at infinity, the same homogenization limit is achieved.  Similar questions have been addressed for the inviscid first-order Hamilton--Jacobi equations both from the PDE theoretic viewpoint (see \cite{PLL-college,zbMATH07755395,zbMATH08099540,zbMATH06921166}) and the variational viewpoint \cite{MR5057984}. Finally, we remark that our approach extends to other Hamilton--Jacobi equations with more general Hamiltonians and non-constant viscosity (see Section 8).

\smallskip
Our article is organized as follows. Section 2 contains the definition (Definition 2.1)  of our notion of variational convergence for random functions and the statement of our main result, namely Theorem 2.2, of variational convergence on a finite time interval $(a,b)$. Indeed, since the exponential $e^{-\lambda t}$ is integrable on $(0,+\infty)$, the convergence results for the energies above can be obtained from the analogous result for energies localized on bounded time intervals. We address the latter setting until Section 6. First, Section 3 yields the lower limit. We construct a sequence of suitable correctors in Section 4 and use it for the proof of the upper limit in Section 5. We then address the case of a suitable perturbation of the periodic potential in Section 6. We  adapt our arguments to the infinite time interval; that is, to $(0,+\infty)$,  in Section 7 and conclude on the homogenization of the original problem \eqref{11}--\eqref{12}.  In Section 8 we comment on possible application of the convergence scheme to other settings. Our final Section 9 contains two classical properties of the Brownian motion that we include here for the convenience of the reader.

\section{Statement of the convergence theorem} 
Throughout the text, $(\Omega, \mathcal F, P)$ is a fixed probability space.
Given $\lambda>0$, $\mathcal F_t$ and $B_t$, we consider energies of the form
\begin{equation}\label{fellat}
F^\lambda_\e(v,\mathcal F_t,B_t)= \int_0^{+\infty} \EE\Big(\frac12 |\dot v(t)|^2+ V_{\rm per} \Big(\frac{\sqrt\e B_t+ v(t)}\e\Big)\Big)\, e^{-\lambda t}dt
\end{equation}
defined on functions $v=v(t,\omega)$ with $v(0,\omega)=x$. All our Brownian motions are standard, valued in $\mathbb R^d$, and start at $x=0$ for $t=0$.

The convergence results for the energies above can be obtained from the analogous result for energies localized on a bounded interval. With fixed $a,b$ we consider the functional
\begin{equation}\label{deffe}
F_\e(v,\mathcal F_t,B_t)= \int_a^{b} \EE\Big(\frac12 |\dot v(t)|^2+ V_{\rm per} \Big(\frac{\sqrt\e B_t+ v(t)}\e\Big)\Big)dt
\end{equation}
defined for $\mathcal F_t$ a filtration of $\mathcal F$, $B_t$ a Brownian motion adapted to $\mathcal F_t$, and $v\in H^1((a,b);L^2(\Omega))$ such that for every $t$ the function $v(t)$ is $\mathcal F_t$-measurable. In this notation, $\dot v$ denotes the time derivative, defined for almost all $t$ as the limit of the difference quotients in the strong topology in $L^2(\Omega)$.

We remark that the spaces $L^2((a,b);L^2(\Omega))$ and $H^1((a,b);L^2(\Omega))$ can be characterized equivalently by requiring that $\omega\mapsto \int_a^b|u(t,\omega)|^2dt$ and $\omega\mapsto \int_a^b|\dot u(t,\omega)|^2dt$ be integrable in $\Omega$.

\smallskip
We will consider the following notion of convergence. 

\begin{definition}[convergence to a deterministic variable]\label{defco}
Given a sequence $v_k$ with  $v_k\in H^1((a,b);L^2(\Omega;\mathbb R^d))$ and $v\in H^1((a,b);\mathbb R^d)$ we write $v_k\to v$ if 
for all subsequences $v_{k_j}$, there exist a subsequence, which we still denote by $v_{k_j}$, 
and $\widehat v\in H^1((a,b);L^2(\Omega;\mathbb R^d))$, possibly depending on the subsequence, such that 
$$
 v_{k_j}\wto \widehat v \hbox{ weakly in }H^1((a,b);L^2(\Omega;\mathbb R^d)),
\qquad
\EE(\widehat v(t))=v(t)\hbox{ for every }t.
$$
\end{definition}

We define $f_{\rm hom}\colon\mathbb R^d\to\mathbb R$ as
\begin{eqnarray}\label{fomd}
f_{\rm hom}(\xi)=\liminf_{T\to+\infty} \frac1T\inf_{\mathcal F_t,  B_t}\bigg\{\int_0^T \EE\Big(\frac12|\dot w(t)|^2+V_{\rm per}(B_t+ w(t))\Big)dt: w\in X_0^T(\xi)  \bigg\},
\end{eqnarray}where 
$X_0^T(\xi)$ 
is the set of all $w\in  H^1((a,b);L^2(\Omega;\mathbb R^d))$ such that $w(t)$ is $\mathcal F_t$-measurable for every $t$ and $\EE(w(T, \cdot)-w(0,\cdot))=T\xi $, and the minimum is also taken over all filtrations $\mathcal F_t$ and all $d$-dimensional Brownian motions $B_t$ adapted to $\mathcal F_t$.
Note that $X_0^T(\xi)$ depends on the filtration $\mathcal F_t$. Note moreover that,  using $w(t)= \xi t$ as a test function, 
\begin{equation}\label{13}
f_{\rm hom}(\xi)\le \frac12|\xi|^2+\max V_{\rm per}.
\end{equation}

Accordingly, we define 
\begin{equation}\label{Fhomdef}
F_{\rm hom}(v)=\int_a^b f_{\rm hom}(\dot v(t))dt,
\end{equation}
for $v\in H^1((a,b);\mathbb R^d)$. Given $\e_k\to 0$, our goal is to prove the $\Gamma$-convergence of $F_{\e_k}$ to $F_{\rm hom}$ in the following sense.

\begin{theorem}\label{main} Let $F_\e$ be given by \eqref{deffe}. Given $\e_k\to 0$, we have

\smallskip
{\rm(i)} For every $v\in H^1(a,b)$ and for every sequences $\mathcal F^k_t$,  $B^k_t$, and $v_k\to v$ such that 
$v_k(t)$ is $\mathcal F^k_t$-measurable, we have 
$$
F_{\rm hom} (v)\le\liminf_{k\to+\infty}F_{\e_k}(v_k, \mathcal F^k_t,B^k_t);
$$

\smallskip
{\rm(ii)} for every $v\in H^1(a,b)$ there exists sequences  $\mathcal F^k_t$,  $B^k_t$, and $v_k\to v$, such that 
$v_k(t)$ is $\mathcal F^k_t$-measurable,  such that
$$
F_{\rm hom} (v)\ge\limsup_{k\to+\infty}F_{\e_k}(v_k, \mathcal F^k_t,B^k_t).
$$
\end{theorem}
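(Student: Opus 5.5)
The proof splits into a lower bound (i) and an upper bound (ii). In both, I first use the scaling that links $F_\e$ to the cell problem \eqref{fomd}. Given $v$ on an interval $(t_0,t_0+\e T)$, set
$$
w(s)=\e^{-1}v(t_0+\e s)-\e^{-1}v(t_0),\qquad \widetilde B_s=\e^{-1/2}\bigl(B_{t_0+\e s}-B_{t_0}\bigr),
$$
with the shifted filtration $\mathcal F_{t_0+\e s}$. Then $\widetilde B$ is again a standard Brownian motion adapted to the new filtration. The integrand becomes $\frac12|\dot w(s)|^2+V_{\rm per}(\widetilde B_s+w(s)+c)$, where the random offset
$$
c=\e^{-1}\bigl(\sqrt\e B_{t_0}+v(t_0)\bigr)
$$
is $\mathcal F_{t_0}$-measurable. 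The energy on that interval equals $\e\int_0^T(\cdots)\,ds$. Since $V_{\rm per}$ is periodic and the minimum in \eqref{fomd} is taken over all filtrations and Brownian motions, the offset should be harmless. The plan is to condition on $\mathcal F_{t_0}$ and check that the infimum defining $f_{\rm hom}$ is unchanged if $w(0)$ is an arbitrary random variable measurable at time $0$ (the problem is translation invariant after reducing modulo the lattice). As a preliminary, I would also show that the $\liminf$ in \eqref{fomd} is a true limit, by a subadditivity or concatenation argument using independent Brownian increments, and that $f_{\rm hom}$ is convex and of quadratic growth (by \eqref{13} and $V_{\rm per}\ge\min V$). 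Convexity comes from concatenating an optimal $w$ for $\xi_1$ on a fraction $\theta T$ with one for $\xi_2$ on the rest.

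\textbf{Lower bound (i).} Take $v_k\to v$ with bounded energy. Pass to a subsequence with $v_k\wto\widehat v$ and $\EE\widehat v=v$. I use a blow-up argument, either Fonseca--Müller style or simply a partition of $(a,b)$ into intervals $I_j$ of length $\delta$. On each $I_j$, I cut into $\e_k$-scaled cells of length $\e_kT$. On each cell the rescaled function is a competitor for the cell problem with slope $\xi=\EE(\text{increment})/(\e_kT)$. This gives
$$
F_{\e_k}(v_k;I_j)\gtrsim \sum_{\text{cells}} \e_kT\, f_T(\xi_{\rm cell}),
$$
where $f_T$ is the finite-$T$ quantity. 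The constraint in \eqref{fomd} fixes only the expectation of the increment, so the convergence in Definition~\ref{defco} exactly supplies $\EE(v_k(t+h)-v_k(t))\to v(t+h)-v(t)$, by weak convergence tested against constants. Then Jensen's inequality applied to the convex $f_{\rm hom}$ (after $T\to\infty$, with $f_T\ge f_{\rm hom}-o(1)$ uniformly on bounded sets) together with lower semicontinuity yields $\int f_{\rm hom}(\dot v)$. A technical point is that the cell increments are averaged. This is handled by convexity and the equi-boundedness of $\EE\int|\dot v_k|^2$.

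\textbf{Upper bound (ii).} By density and the continuity of $F_{\rm hom}$ (convex with quadratic growth), it suffices to treat piecewise-affine $v$. Fix a slope $\xi$ and $T$ large. Take near-optimal $(w,\mathcal F_t,B_t)$ for the cell problem and, if necessary, modify $w$ so that $w(0)=0$. Build the recovery sequence by concatenating independent copies: on a product probability space (or by enlarging $\Omega$, which the statement allows since the filtration and Brownian motion are variables), the $j$-th copy is driven by the $j$-th Brownian increment. The rescaled pieces $\e w^{(j)}(\cdot/\e)$ are glued so that the global Brownian motion is the concatenation of the rescaled increments; the strong Markov property makes this a Brownian motion for the concatenated filtration. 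Continuity at the junctions requires starting each piece from the end point of the previous one, and the offset issue above, handled via the conditioning and periodicity claim, makes each piece cost exactly the cell energy. The random deviation $v_k-v$ has zero mean increments. Its $L^2$ norm is bounded, so $v_k$ converges weakly to some $\widehat v$ with $\EE\widehat v=v$, which is exactly the convergence of Definition~\ref{defco}. Affine pieces with different slopes are joined by the same concatenation at the breakpoints.

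\textbf{Main obstacle.} I expect the hardest step to be the offset/conditioning claim. It requires showing that a random, $\mathcal F_{t_0}$-measurable initial shift $c$ does not lower the cell infimum below $f_{\rm hom}$, so that the lower bound holds, and that optimal cell solutions can be started at an arbitrary adapted point, so that the gluing works. My approach is a measurable selection of near-minimizers parametrized by $c\in[0,1)^d$, using the periodicity of $V_{\rm per}$ and the continuity of the cell value in $c$, combined with regular conditional probabilities to transfer the cell problem onto conditional spaces.
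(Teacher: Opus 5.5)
There is a genuine gap, located in the step you treat as routine: gluing near-minimizers of the cell problem.

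If a copy $(w^{j+1},B^{j+1})$ is restarted at the end point $X_j$ of the previous piece, continuity of the path forces the potential to be evaluated at $B^{j+1}_s+w^{j+1}(s)+c_j$, with $c_j=X_j-w^{j+1}(0)$. This is a random vector which is not in $\mathbb Z^d$. Periodicity of $V_{\rm per}$ only gives invariance under integer shifts, so the piece does not cost ``exactly the cell energy''. For the same reason you cannot normalize $w(0)=0$ at no cost when $w(0)$ is random.

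Your remedy (conditioning, measurable selection in $c\in[0,1)^d$, continuity of the cell value in $c$) only reduces matters to a further claim: that the cell value with prescribed starting point $c$ is, per unit time and as $T\to\infty$, independent of $c$. Continuity in $c$ yields at best a discrepancy $T\,\omega(|c-c'|)$, with $\omega$ the modulus of continuity of $V_{\rm per}$. That is, it gives closeness only for nearby starting points. Independence for distant ones is the real content, and it requires an extra mechanism.

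The paper's mechanism (Proposition~\ref{correct}) is recurrence of Brownian motion on the torus. After each copy one waits, for a time at most $K$, until the stopping time at which ${\rm dist}(\widetilde B^j_t+\widetilde w^j(t)-w^{j+1}(0),\mathbb Z^d)\le r$, with the filtration containing $\mathcal F^{j+1}_0$. By Proposition~\ref{apbrown} this happens outside a set of probability less than $\eta_K\to0$. The next independent copy is then attached through the strong Markov concatenation of Proposition~\ref{concat}. The price per cell is of order $K\max|V_{\rm per}|$ for waiting, $T\omega(r)$ for the residual offset, and $2T\eta_K\max|V_{\rm per}|$ on the failure set. In addition the slope is reduced to $\frac{T}{T+K}\xi$. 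All of this is removed by letting $T\to\infty$ along a sequence realizing the $\liminf$ and diagonalizing. Your subadditivity argument (existence of the limit) and your convexity proof are also concatenation arguments, so they inherit the same gap.

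This propagates to your lower bound. The partition/Jensen route with cells of length $\e_kT$ needs convexity of $f_{\rm hom}$ a priori; otherwise weak limits only see the convex envelope. The paper's blow-up avoids this. At a.e.\ $t_0$ it produces a competitor on a cell of length $T_k=\rho_k/\e_k\to\infty$, and fixes the mean end values exactly with Lemma~\ref{lemmada}. It thus uses only the definition of $f_{\rm hom}$ as a $\liminf$. Convexity is obtained afterwards (Corollary~\ref{coro5}) from (i) and the upper bound for piecewise-affine targets, and this is what justifies the density step in (ii).

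Finally, the random offset is no obstacle in the lower bound. $X_0^T(\xi)$ leaves $w(0)$ free and the filtration may have a nontrivial $\mathcal F_0$, so $w+c$ is itself admissible. The paper simply takes $\widehat w_k=w_k+B^k_{t_0}/\sqrt{\e_k}$ with $\widehat{\mathcal F}^k_s=\mathcal F^k_{t_0+\e_k s}$, and no conditioning or measurable selection is needed there.
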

From this theorem, we will deduce the analogous result for the functionals \eqref{fellat}. 
We briefly explain how that convergence result allows us
to give an alternative characterization of $H_{\rm hom}$ as the Legendre transform of the homogenized Lagrangian $f_{\rm hom}$. The details will be given in Section \ref{stab-hl}.

Given $\lambda>0$ and $\e>0$, for all $p\in\mathbb R^d$ we consider the unique viscosity solution $U_{ p ,\e}$ of the Hamilton-Jacobi equation 
\begin{equation}
\lambda U_{ p ,\e}(x)-\frac\e2 \triangle U_{ p ,\e}(x)+H_{\rm per}\Big(\frac x\e,
 p +\nabla U_{ p ,\e}(x)\Big)=0,
\qquad x\in\mathbb R^d.
\end{equation}
From homogenization theory we know that
$U_{ p ,\e}(x)$ converge as $\e\to 0$ to the (constant) solution $U_ p $ of 
\begin{equation}\label{Homp}
\lambda U_{ p }(x)+H_{\rm hom}(
 p +\nabla U_{ p }(x))=0,
\qquad x\in\mathbb R^d.
\end{equation}

On the other hand, given a probability space $(\Omega, \mathcal F, P)$ and a $d$-dimensional Brownian motion $B_t$ on it,
the solution $U_{ p ,\e}$ can also be characterized as 
$$
U_{ p ,\e}(x)=\inf\Big\{
\int_0^{+\infty} \EE\Big(\frac12 |\dot v(t)|^2-\langle  p ,\dot v(t)\rangle+ V_{\rm per} \Big(\frac{\sqrt\e B_t+ v(t)}\e\Big)\Big)\, e^{-\lambda t}dt:  v(0)=x\Big\},
$$
where $\EE$ denotes the expectation with respect to $(\Omega, \mathcal F, P)$.
Here, the infimum is taken over all functions $v\colon(0,+\infty) \to L^2(\Omega;\mathbb R^d)$ in the corresponding weighted Sobolev space, satisfying the initial condition
and adapted to the filtration corresponding to the Brownian motion (see e.g.~\cite[Section I.8]{MR1484411} and \cite{MR2179357}).

Using the functionals $F^\lambda_\e$ in \eqref{fellat}, the solutions $U_{ p ,\e}(x)$ can be rewitten as
\begin{eqnarray}\label{Uetae}
U_{ p ,\e}(x)=\inf\Big\{ F^\lambda_\e(v,\mathcal F_t,B_t)-\int_0^{+\infty}\EE(\langle  p ,\dot v(t)\rangle) e^{-\lambda t}dt: v(0)=x\Big\},
\end{eqnarray}
where the integral will be treated separately as a continuous perturbation.
In Theorem \ref{main2} we will show that the limit of $F^\lambda_\e$ in the appropriate weighted Sobolev space can be written  as
$$
F^\lambda_{\rm hom}(v)=\int_0^{+\infty} f_{
\rm hom}(\dot v(t))\, e^{-\lambda t}dt.
$$

Since in this approach the integral in 
\eqref{Uetae} converges continuously to the corresponding deterministic term 
$\int_0^{+\infty}\langle  p ,\dot v(t)\rangle e^{-\lambda t}dt$,
the behaviour of the solutions $U_{ p ,\e}(x)$ can be characterized as converging to 
\begin{equation}
U_ p (x)=\min\Big\{ \int_0^{+\infty}
\big(f_{\rm hom}(\dot v(t))-\langle  p ,\dot v(t)\rangle \big)e^{-\lambda t}dt: v(0)=x\Big\}.
\end{equation}
This minimum value is independent of $x$, is equal to $\frac1 \lambda \min\{f_{\rm hom}(\xi)-\langle  p ,\xi\rangle\}$, and it is achieved by $v(t)=x+\xi t$, where $\xi
\in \mathbb R^d$ minimizes $f_{\rm hom}(\xi)-\langle  p ,\xi\rangle$.
This gives 
$$
\lambda U_ p =\min \{f_{\rm hom}(\xi)-\langle  p ,\xi\rangle\} =-f_{\rm hom}^*( p ),
$$
where $f_{\rm hom}^*$ denotes the Legendre transform of $f_{\rm hom}$. Since $U_ p $ is constant independent of $x$, from the previous equality it trivially solves 
$$
\lambda U_ p + f^*_{\rm hom}(p+\nabla U_ p )=\lambda U_ p + f^*_{\rm hom}( p )=0.
$$
Since $U_ p $ is also characterized as the solution of 
$$
\lambda U_ p + H_{\rm hom}(p+\nabla U_ p )=\lambda U_ p + H_{\rm hom}( p )=0,
$$
this shows that $f^*_{\rm hom}=H_{\rm hom}$ by the arbitrariness of $ p $.

\section{Lower bound}
This section is devoted to the proof of claim (i) in Theorem \ref{main}.
We will need the following lemma, which will be used to modify boundary values. Note that the functions implicitly depend on $\omega$.

\begin{lemma}[variation of boundary data]\label{lemmada} Let $V_k\colon(0,1)\times \mathbb R^d\to \mathbb R$ be Borel functions such that there exists $M>0$ with $|V_k(t,x)|\le M$ for all $k\in\mathbb N$, $t\in(0,1)$, and $x\in\mathbb R^d$. We define
\begin{equation}\label{degde}
G_k(z)=\int_0^1\EE\Big(\frac12|\dot z(t)|^2dt+V_k(t,z(t))\Big)dt
\end{equation}
for $z
\in H^1((0,1),L^2(\Omega;\mathbb R^d))$.
Let $\mathcal G^k_t$ be a sequence of filtrations and let $z_k\in H^1((0,1),L^2(\Omega;\mathbb R^d))$
be such that $z_k(t)$ is  $\mathcal G^k_t$-measurable for every $t\in(0,1)$. Assume that $z_k\wto \widehat z$ weakly in $H^1((0,1),L^2(\Omega;\mathbb R^d))$.
Then, given $\eta>0$ there exists a sequence $\widetilde z_k$ in $H^1((0,1),L^2(\Omega;\mathbb R^d))$ such that 

\smallskip
{\rm (i)} $\widetilde z_k\wto \widehat z$ weakly in $H^1((0,1),L^2(\Omega;\mathbb R^d))$;

\smallskip
{\rm (ii)} $\widetilde z_k(t)$ is  $\mathcal G^k_t$-measurable for every $t\in(0,1)$; 

\smallskip
{\rm (iii)} $G_{k} (\widetilde z_k) \le G_{k} (z_k)+\eta$ for $k$ large enough;

\smallskip
{\rm (iv)} $\EE(\widetilde z_k(0))=\EE(\widehat z(0))$ and $\EE(\widetilde z_k(1))=\EE(\widehat z(1))$.

\end{lemma}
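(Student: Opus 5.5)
Since only the expectations at the endpoints must be adjusted, I will add to $z_k$ a \emph{deterministic} affine-in-time correction, possibly switched on only near the endpoints. A deterministic function is $\mathcal G^k_t$-measurable for every $t$, so (ii) is automatic. Set $a_k=\EE(\widehat z(0))-\EE(z_k(0))$ and $b_k=\EE(\widehat z(1))-\EE(z_k(1))$. The maps $z\mapsto \EE(z(0))$ and $z\mapsto\EE(z(1))$ are continuous linear functionals on $H^1((0,1);L^2(\Omega;\mathbb R^d))$, because of the continuous embedding into $C([0,1];L^2)$. Weak convergence $z_k\wto\widehat z$ therefore gives $a_k\to0$ and $b_k\to0$ in $\mathbb R^d$.

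The first candidate is $\widetilde z_k(t)=z_k(t)+(1-t)a_k+t\,b_k$. Then (iv) holds, and since $\ell_k(t)=(1-t)a_k+tb_k\to0$ strongly in $H^1$, (i) holds as well. The difficulty is (iii). The kinetic part is harmless:
\[
\int_0^1\EE\,\tfrac12|\dot z_k+b_k-a_k|^2dt\le \int_0^1\EE\,\tfrac12|\dot z_k|^2dt+|b_k-a_k|\,\|\dot z_k\|_{L^2}+\tfrac12|b_k-a_k|^2 .
\]
The right-hand side differs from the original kinetic energy by $o(1)$, because $\|\dot z_k\|$ is bounded by weak convergence. The potential part is not harmless. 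The $V_k$ are merely Borel and bounded, with no continuity and no uniformity in $k$, so $V_k(t,z_k+\ell_k)$ cannot be compared with $V_k(t,z_k)$ pointwise, even when $\ell_k$ is small.

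The main step is to confine the modification to a short time set and use only the bound $|V_k|\le M$. Fix $\delta>0$ with $4M\delta\le\eta/2$, and choose a deterministic correction $\ell_k\in H^1((0,1);\mathbb R^d)$ that is supported in $[0,\delta]\cup[1-\delta,1]$, with $\ell_k(0)=a_k$ and $\ell_k(1)=b_k$. Concretely, take $\ell_k(t)=a_k(1-t/\delta)^+ + b_k(1-(1-t)/\delta)^+$. Then $\|\ell_k\|_{H^1}\le C_\delta(|a_k|+|b_k|)\to0$, so (i) still holds, and (ii) and (iv) hold as before. On $(\delta,1-\delta)$ the integrands of $G_k(\widetilde z_k)$ and $G_k(z_k)$ coincide. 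On the two end intervals, of total length $2\delta$, the potential terms differ by at most $2M\cdot2\delta\le\eta/2$. The kinetic terms differ over the whole interval by at most
\[
\|\dot\ell_k\|_{L^2}\|\dot z_k\|_{L^2((0,1);L^2)}+\tfrac12\|\dot\ell_k\|_{L^2}^2 ,
\]
which is at most $C(|a_k|+|b_k|)/\sqrt\delta+C(|a_k|+|b_k|)^2/\delta\to0$. For $k$ large this is below $\eta/2$, which gives (iii).

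I expect the main obstacle to be exactly this lack of regularity of $V_k$ in $x$. Any change of $z_k$ on a set of times of positive measure must be paid for through the $L^\infty$ bound alone. That forces $\delta$ to be chosen first, in terms of $\eta$ and $M$, and only afterwards $k$ large, in terms of $\delta$, so that the $1/\sqrt\delta$ blow-up of $\|\dot\ell_k\|$ is beaten by $a_k,b_k\to0$. This order of choices is why the lemma allows an error $\eta$ rather than $o(1)$.
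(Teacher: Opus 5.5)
Your proof is correct and follows essentially the paper's argument: you add a deterministic correction of the endpoint means supported in $[0,\delta]\cup[1-\delta,1]$, the potential cost is controlled by $4M\delta$ through the $L^\infty$ bound alone, and the kinetic cross terms vanish because $a_k,b_k\to 0$ while $\dot z_k$ stays bounded. The differences are cosmetic: the paper uses a smooth cutoff $\varphi\in C^\infty_c(0,1)$ and invokes the arbitrariness of $\delta$ at the end, whereas you use a piecewise-linear cutoff and fix $\delta$ in terms of $\eta$ and $M$ from the start.
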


\begin{proof} We first note that $\EE(z_k(0))\to\EE(\widehat z(0))$ (and similarly $\EE(z_k(1))\to\EE(\widehat z(1))$).
We set $\alpha_k=\EE(z_k(0))-\EE(\widehat z(0))$ and $\beta_k=\EE(z_k(1))-\EE(\widehat z(1))$. Let $\delta>0$ and let $\varphi\in C_c^\infty(0,1)$ with $0\le\varphi\le 1$ and $\varphi=1$ in $(\delta,1-\delta)$. We set
$$
\widetilde z_k(t)=\begin{cases} z_k(t)-(1-\varphi(t))\alpha_k & \hbox{ if } t\in(0,\delta)\\
 z_k(t) &\hbox{ if } t\in[\delta,1-\delta]\\
z_k(t)-(1-\varphi(t))\beta_k & \hbox{ if } t\in(1-\delta,1),
 \end{cases}
$$
so that
$$
\dot{\widetilde z}_k(t)=\begin{cases} \dot z_k(t) +\dot \varphi(t)\alpha_k & \hbox{ if } t\in(0,\delta)\\
\dot z_k(t) &\hbox{ if } t\in[\delta,1-\delta]\\
\dot z_k(t) +\dot \varphi(t)\beta_k & \hbox{ if } t\in(1-\delta,1)
 \end{cases}
$$
Properties (ii) and (iv) are true by construction, and (i) follows since  $\alpha_k,\beta_k\to 0$.
To prove (iii) we compute 
\begin{eqnarray*}
G_k (\widetilde z_k)&=& \int_0^{1} \EE\Big(\frac12 |\dot{\widetilde z}_k(t)|^2+ V_k(t, \widetilde z_k(t))\Big)dt
\\
&\le& \int_{\delta}^{1-\delta} \EE\Big(\frac12 |\dot{z}_k(t)|^2+  V_k(t, z_k(t))\Big)dt
\\
&&
+\int_0^{\delta} \EE\Big(\frac12 |\dot{z}_k(t)|^2+ \alpha_k\dot{z}_k(t)\dot \varphi(t) +\frac12\alpha_k^2 |\dot{\varphi}(t)|^2\Big)dt
\\
&&
+\int_{1-\delta}^1 \EE\Big(\frac12 |\dot{z}_k(t)|^2+ \beta_k\dot{z}_k(t)\dot \varphi(t) +\frac12\beta_k^2 |\dot{\varphi}(t)|^2\Big)dt
+2\delta \max |V_k|
\\
&\le& G_k(z_k)
+\int_0^{\delta} \EE\Big(\alpha_k\dot{z}_k(t)\dot \varphi(t) +\frac12\alpha_k^2 |\dot{\varphi}(t)|^2\Big)dt
\\
&&
+\int_{1-\delta}^1 \EE\Big(\beta_k\dot{z}_k(t)\dot \varphi(t) +\frac12\beta_k^2 |\dot{\varphi}(t)|^2\Big)dt
+4\delta M.
\end{eqnarray*}
Since $\alpha_k$ and $\beta_k\to 0$, claim (iii) follows by the arbitrariness of $\delta$.
\end{proof}

\begin{proof}[Proof of the lower bound \rm(claim {\rm(i)} of Theorem {\rm\ref{main}})]
We will use the blow-up method \cite{FM,BMS}, applied to the measures $\mu_k=g_k\mathcal L$, where
$$
g_k(t)=\EE\Big(\frac12 |\dot v_{k}(t)|^2+ V_{\rm per} \Big(\frac{\sqrt{\e_k} B^k_t+ v_k(t)}{\e_k}\Big)\Big) .
$$
It is not restrictive to suppose that there exists the limit
$$
\lim_{k\to+\infty}\mu_k((a,b))=\lim_{k\to+\infty}F_{\e_k}(v_k, \mathcal F^k_t,B^k_t)<+\infty.
$$
Hence, we can suppose that $\mu_k$ converge weakly* to  a bounded Radon measure $\mu$ on $(a,b)$. We let $\frac{d \mu}{dt}$ denote the density of its absolutely continuous part with respect to the Lebesgue measure $\mathcal L$.

We fix $t_0\in(a,b)$ a Lebesgue point for $\dot v$ such that
$$
\frac{d \mu}{dt}(t_0)=\lim_{\rho\to 0}\frac1{\rho}\mu((t_0,t_0+\rho)),
$$
and note that $\mathcal L$-almost every $t_0\in(a,b)$ satisfies this property. Since $\mu_k((t_0,t_0+\rho))\to \mu((t_0,t_0+\rho))$ except for at most a countable set of $\rho$, we can find $\rho_k\to 0$ such that
$$
\frac{d \mu}{dt}(t_0)= \lim_{k\to +\infty}\frac1{\rho_k}\mu_k((t_0,t_0+\rho_k));
$$
that is, 
\begin{equation}\label{limmu}
\frac{d \mu}{dt}(t_0)= \lim_{k\to +\infty}
\frac1{\rho_k}\int_{t_0}^{t_0+\rho_k}\EE\Big(\frac12 |\dot v_{k}(t)|^2+ V_{\rm per} \Big(\frac{\sqrt{\e_k} B^k_t+ v_k(t)}{\e_k}\Big)\Big)dt.
\end{equation}

With $\widehat v$ as in Definition \ref{defco}, we can also assume that $t_0$ is such that $(\widehat v(t_0+\rho)- \widehat v(t_0))/\rho$ converges strongly in $L^2(\Omega)$ as $\rho\to 0^+$ to $\widehat \xi\in L^2(\Omega;\mathbb R^d)$, the derivative of $\widehat v$ in $t_0$. We can then assume in addition that the parameter $\rho_k>0$ is also such that the scaled functions $z_k(t)=\frac1{\rho_k}(v_k(t_0+\rho_k t)- v_k(t_0))$ converge strongly  in $L^2((0,1),L^2(\Omega);\mathbb R^d)$ to $\widehat z(t)=\widehat \xi t$. 

In order to apply  Lemma \ref{lemmada} we now write
\begin{eqnarray*}
&&\hskip-.5cm\frac1{\rho_k}\int_{t_0}^{t_0+\rho_k}\EE\Big(\frac12 |\dot v_{k}(t)|^2+ V_{\rm per} \Big(\frac{\sqrt{\e_k} B^k_t+ v_k(t)}{\e_k}\Big)\Big)dt
\\
&&=\int_{0}^{1}\EE\Big(\frac12 |\dot v_{k}(t_0+\rho_kt)|^2+ V_{\rm per} \Big(\frac{\sqrt{\e_k} B^k_{t_0+\rho_k t}+ v_k(t_0+\rho_k t)}{\e_k}\Big)\Big)dt
\\
&&=\int_{0}^{1}\EE\Big(\frac12 |\dot z_{k}(t)|^2+ V_{\rm per} \Big(\frac{\sqrt{\e_k} B^k_{t_0+\rho_k t}+v_k(t_0)+\rho_ k z_k(t)} {\e_k}\Big)\Big)dt
\\
&&=\int_{0}^{1}\EE\Big(\frac12 |\dot z_{k}(t)|^2+ V_k (t,z_k(t))\Big)dt,
\end{eqnarray*}
where 
$$
V_k(t,x)= V_{\rm per} \Big(\frac{\sqrt{\e_k} B^k_{t_0+\rho_k t}+v_k(t_0)+\rho_ k x }
{\e_k}\Big).
$$

By applying Lemma \ref{lemmada} with the filtration $\mathcal G^k_t:=\mathcal F^k_{t_0+\rho_k t}$, we obtain $\widetilde z_k$ satisfying claims (i)--(iv) of that lemma, with $\widehat z(t)= \widehat \xi t$. As a consequence, by \eqref{limmu} we have
$$
\liminf_{k\to+\infty} \int_{0}^{1}\EE\Big(\frac12 |\dot z_{k}(t)|^2+ V_{\rm per} \Big(\frac{\sqrt{\e_k} B^k_{t_0+\rho_k t}+v_k(t_0)+\rho_ k \widetilde z_k(t)}{\e_k}\Big)\Big)dt\le \frac{d \mu}{dt}(t_0)+\eta.
$$

Changing variables 
$$
s=\frac{\rho_k}{\e_k}t,\qquad T_k= \frac{\rho_k}{\e_k},\qquad w_k(s)=\frac{\rho_k}{\e_k}\, \widetilde z_k\Big(\frac{\e_k}{\rho_k} s\Big) +\frac{v_k(t_0)}{\e_k},
$$
we can rewrite the integral as 
\begin{eqnarray}\label{limTk}\nonumber
&&\frac1{T_k}\int_{0}^{T_k}\EE\Big(\frac12 |\dot w_{k}(s)|^2+ V_{\rm per} 
\Big(\frac{B^k_{t_0+\e_k s}}{\sqrt{\e_k}}+ w_k(s)\Big)\Big)ds.
\\
&&=\frac1{T_k}\int_{0}^{T_k}\EE\Big(\frac12 |\dot w_{k}(s)|^2+ V_{\rm per} 
\Big(\frac{B^k_{t_0+\e_k s}-{B^k_{t_0}}}{\sqrt{\e_k}}+ \frac{{B^k_{t_0}}}{\sqrt{\e_k}}+ w_k(s)\Big)\Big)ds.
\end{eqnarray}

We then define 
$$
\widehat w_k(s):=w_k(s){+\frac{B^k_{t_0}}{\sqrt{\e_k}}},\qquad\qquad
\widehat B^k_s:=\frac{B^k_{t_0+\e_k s}- B^k_{t_0}}{\sqrt{\e_k}},
$$
 and  rewrite \eqref{limTk} as 
\begin{equation}\label{limTk1}
\frac1{T_k}\int_{0}^{T_k}\EE\Big(\frac12{ \big|\dot{\widehat w}_{k}(s)\big|^2}+ V_{\rm per} 
({\widehat B^k_s}+ 
\widehat w_k(s))\Big)ds.
\end{equation}
If we define the filtration $\widehat {\mathcal F}^k_s:= \mathcal F^k_{t_0+\e_k s}$, to which the Brownian motion  $\widehat B^k_s$ is adapted, then $(\widehat {\mathcal F}^k_s,\widehat B^k_s,\widehat w_k)$ is a test triplet for the problem in \eqref{fomd} for $T=T_k$, since $\EE(\widehat w_k(T_k))-\EE(\widehat w_k(0))=\EE(w_k(T_k)-w_k(0))= \EE(\widehat\xi) T_k= \dot v(t_0)T_k$.
We then obtain 
$$
\frac{d \mu}{dt}(t_0)+\eta\ge f_{\rm hom} (\dot v(t_0)),
$$
which then holds for $\mathcal L$-almost all $t_0\in(a,b)$, and finally,
\begin{eqnarray*}
\lim_{k\to+\infty}F_{\e_k}(v_k, \mathcal F^k_t,B^k_t)
&=&\lim_{k\to+\infty}\mu_k(a,b)\ge \mu(a,b)\\
&\ge& \int_a^b\frac{d \mu}{dt}(t)dt\ge \int_a^b f_{\rm hom} (\dot v(t))dt-\eta(b-a),
\end{eqnarray*}
which gives the lower bound by the arbitrariness of $\eta>0$.\end{proof}

\section{Construction of correctors}\label{corrector-sec} 
In order to prove an upper bound it is convenient, for a given $\xi$, to define $B_t$ and $w$ on the whole half-line $[0,+\infty)$ such that $\sqrt\e B_{t/\e}+ \e w(t/\e)$ is an (almost) corrector for $\xi t$. This is done by concatenating independent copies of an almost minimizer for $f_{\rm hom}$. 

\begin{proposition}[correctors]\label{correct}
With fixed $\xi\in\mathbb R^d$,  $\eta>0$, $K>0$, $T_0>0$ there exist $T>T_0$, $r>0$, a filtration $\mathcal F^\xi_t$, a function $w^\xi \in H^1((0,R),L^2(\Omega;\mathbb R^d))$ for all $R>0$, and a Brownian motion $B^\xi_t$, with both $w^\xi$ and $B^\xi_t$ adapted to $\mathcal F^\xi_t$,  such that for all $0\le s_1<s_2<+\infty$
\begin{eqnarray}\label{tre-b}\nonumber
&&\EE\Big(\int_{s_1}^{s_2} \Big(\frac12|\dot{w}{}^\xi(t)|^2+V_{\rm per}(B^\xi_t+ w^\xi(t))\Big)dt\Big)
\\
&\le&{
(s_2-s_1)\Big( f_{\rm hom}(\xi)+\eta +\big(3\frac{K}T+{2}\eta_K\big)\max|V_{\rm per}|\Big)}+ C(\xi, T, K, \eta),
\end{eqnarray}
where
$$
C(\xi,T,K,\eta):=
2T (f_{\rm hom}(\xi)+\eta) +2(4K+T(1+{2}\eta_K))\max|V_{\rm per}|\Big),\\
$$
and $\eta_K\to 0$ as $K\to+\infty$. Moreover, for all $j\in\mathbb N$, we have
\begin{equation}
\EE(w^\xi(jT+(j-1)K)) -\EE(w^\xi(0))= jT\xi .
\end{equation}
\end{proposition}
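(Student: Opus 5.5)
The plan is to concatenate independent copies of an almost minimizer for the problem defining $f_{\rm hom}(\xi)$, with "gluing" intervals of length $K$ in between.

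\textbf{Choice of the block.} By definition of $f_{\rm hom}$ as a $\liminf$, I pick $T>T_0$ and a triple $(\mathcal F^0_t,B^0_t,w^0)$ on $[0,T]$ with $w^0\in X_0^T(\xi)$ and
$$\frac1T\int_0^T\EE\Big(\tfrac12|\dot w^0|^2+V_{\rm per}(B^0_t+w^0(t))\Big)dt\le f_{\rm hom}(\xi)+\eta.$$
Since $V_{\rm per}$ is periodic, I may shift $w^0$ by a deterministic lattice vector, or more usefully by a $\mathcal F_0$-measurable one, to keep values bounded; in particular I can assume $\EE(w^0(0))$ is fixed.

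\textbf{Concatenation.} On a product probability space I take countably many independent copies $(B^{(j)},w^{(j)})$ of the block. I define $B^\xi$ piecewise: on the $j$-th block interval $I_j=[(j-1)(T+K),(j-1)(T+K)+T]$ it equals the value reached at the start of $I_j$ plus the increments of $B^{(j)}$. On the gap intervals $J_j$ of length $K$, it continues with fresh independent increments. Independence and stationarity of increments make $B^\xi$ a Brownian motion for the natural filtration $\mathcal F^\xi_t$ generated by all the pieces revealed up to time $t$.

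The function $w^\xi$ on $I_j$ is $w^{(j)}$ shifted so that $B^\xi_t+w^\xi(t)$ matches the block argument up to an integer vector. Concretely, at the start time $\tau_j$ of $I_j$, I subtract the fractional part of the position $B^\xi_{\tau_j}+w^\xi(\tau_j^-)$, which is $\mathcal F_{\tau_j}$-measurable. This is the reason the gap interval is needed. On $J_j$, $w^\xi$ must continuously connect the endpoint value of block $j$ to the required starting value of block $j+1$.

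\textbf{Gap estimate (the main obstacle).} The required jump is the difference between the end position and a reduced starting position. In a deterministic setting I would simply interpolate linearly over time $K$. Here the mismatch involves $B$, which is unbounded, and the cost of a linear interpolation over $J_j$ is $|{\rm jump}|^2/(2K)$. The target value must be chosen adaptedly: I choose $w^\xi$ on $J_j$ to move by the jump $D_j$, with $D_j$ measurable at the start of $J_j$ and chosen modulo $\mathbb Z^d$ so that $|D_j|\le\sqrt d$ up to the Brownian increment over $J_j$. That increment, with variance $K$, is exactly what forces the $\eta_K$ terms.

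Alternatively, I would let $w$ absorb only the reduction modulo the lattice and let the potential term over $J_j$ be bounded crudely by $K\max|V_{\rm per}|$. The linear-interpolation kinetic cost is then $O(d/K)$, and the Brownian increment enters only through the choice of the lattice representative, whose effect tends to $0$ as $K\to\infty$ (this gives $\eta_K$). Each gap therefore costs at most $K\max|V_{\rm per}|+O(1/K)$ per period of length $T+K$. Dividing by $T$, this gives the terms $3\frac KT\max|V_{\rm per}|$ and $2\eta_K\max|V_{\rm per}|$ in the per-unit-time rate, after absorbing $O(1/K)$ into $\eta_K\max|V_{\rm per}|$ (harmless if $V_{\rm per}\not\equiv0$).

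The mean condition $\EE(w^\xi(jT+(j-1)K))-\EE(w^\xi(0))=jT\xi$ requires the gap displacements to have zero expectation. I enforce this by choosing lattice representatives symmetrically, or by adding a deterministic correction of size $O(1)$ spread over the gap. This expectation bookkeeping, while keeping adaptedness, is where I expect the most delicacy.

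\textbf{Arbitrary $s_1<s_2$.} The interval $[s_1,s_2]$ is covered by whole periods plus at most two partial periods at the ends. Each partial period is bounded by the full cost of one period, which is at most $T(f_{\rm hom}(\xi)+\eta)+(4K+T(1+2\eta_K))\max|V_{\rm per}|$; here the nonnegativity of the kinetic part is used, together with the bound $|V_{\rm per}|\le\max|V_{\rm per}|$ on any sub-piece. Counting two such boundary periods gives the constant $C(\xi,T,K,\eta)$, and summing the full periods gives the linear term in \eqref{tre-b}.
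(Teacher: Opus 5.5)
There is a genuine gap in the gluing step. With deterministic block intervals, block $I_{j+1}$ starts at a fixed time $\tau_{j+1}$. You need $B^\xi_{\tau_{j+1}}+w^\xi(\tau_{j+1})$ to lie in, or near, $w^{(j+1)}(0)+\mathbb Z^d$. Since $w^\xi\in H^1$, you cannot subtract the fractional part of $B^\xi_{\tau_{j+1}}+w^\xi(\tau_{j+1}^-)$ as a jump. Since $w^\xi$ is adapted, on the gap $J_j$ it cannot move towards a target that depends on $B^\xi_{\tau_{j+1}}$. Your remedy fixes $D_j$ at the beginning of $J_j$ and interpolates linearly. This leaves a misalignment equal to the Brownian increment over $J_j$ reduced modulo $\mathbb Z^d$, and that does \emph{not} become small as $K\to+\infty$. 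Its law converges to the uniform law on $\mathbb R^d/\mathbb Z^d$, and it is independent of the $(j+1)$-st copy. So with probability close to one the next block runs with an order-one random phase, and its potential term is close to $T\int_{[0,1]^d}V_{\rm per}\,dy$. The comparison with $T(f_{\rm hom}(\xi)+\eta)$ is then lost, because the almost minimizer's advantage comes precisely from correlating its position with the wells of $V_{\rm per}$. Hence your claim that this increment is ``exactly what forces the $\eta_K$ terms'', with an effect vanishing as $K\to+\infty$, is false: nothing in your construction produces an event of small probability $\eta_K$. In fact exact alignment at a deterministic time is impossible at finite kinetic cost, since a finite-energy adapted drift keeps the law of $B_t+w(t)$ absolutely continuous. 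Some tolerance $r$ is therefore unavoidable.

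The missing idea is to make the gluing times random and let the Brownian motion realign itself at no cost.
\begin{itemize}
\item \textbf{Construction.} After block $j$ is finished, freeze $w$ at its final value and let the current Brownian motion keep running. Start the next independent copy, translated so that $w$ stays continuous, at the stopping time equal to the first $t\ge jT+(j-1)K$ with ${\rm dist}(B_t+w(t)-w^{j+1}(0),\mathbb Z^d)\le r$, truncated at $jT+jK$.
\item \textbf{Brownian motion.} Restarting a fresh independent Brownian motion at a stopping time again gives a Brownian motion, by the strong Markov property (Proposition~\ref{concat}).
\item \textbf{Good event.} The new block sees its own argument shifted by a lattice vector plus an error of at most $r$. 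By periodicity and uniform continuity of $V_{\rm per}$, this costs at most $\eta T/2$ once $r$ is small.
\item \textbf{Bad event.} Here the projected Brownian motion fails to enter an $r$-ball of the torus within time $K$. By Proposition~\ref{apbrown} it has probability at most $\eta_K\to0$, and it costs at most $2T\max|V_{\rm per}|$. This is where $\eta_K$ comes from.
\item \textbf{Gaps.} The frozen gaps have length between $0$ and $2K$ and contribute only potential energy, which gives the $K\max|V_{\rm per}|$ terms.
\item \textbf{Mean condition.} Since $w$ is constant on the gaps, $w^\xi(jT+(j-1)K)=w^1(0)+\sum_{\ell=1}^j(w^\ell(T)-w^\ell(0))$. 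The mean condition therefore holds automatically, and the expectation bookkeeping you expected to be delicate never arises.
\end{itemize}
Keeping deterministic gluing times would instead require a genuinely stochastic feedback drift on $J_j$, steering $B+w$ into an $r$-neighbourhood of the target coset. That drift would need its own cost estimate and zero-mean correction, and your proposal does not supply it.
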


\begin{proof}
We choose $T>T_0$ such that 
$$
\frac1T\min\bigg\{\int_0^T \EE\Big(\frac12|\dot w(t)|^2+V_{\rm per}(B_t+ w(t))\Big)dt:\mathcal F_t, w\in X_0^T(\xi), B_t\bigg\}<  f_{\rm hom}(\xi)+\frac\eta2.
$$
Recall that $X_0^T(\xi)$ depends on the filtration $\mathcal F_t$. 
Therefore, there exist a filtration $\mathcal F_t$, a function $w\in X_0^T(\xi)$, and a Brownian motion $B_t$ adapted to $\mathcal F_t$, such that 
\begin{equation}\label{mineta}
\frac1T \int_0^T \EE\Big(\frac12|\dot w(t)|^2+V_{\rm per}(B_t+ w(t))\Big)dt< f_{\rm hom}(\xi)+\frac\eta2.
\end{equation}
The function $w$ is extended as a constant $w(t)(\omega)=w(T)(\omega)$ for $t>T$. 

In order to proceed inductively in the following, we will need independent copies of $B_t$ and $w$.
To that end, we note that, up to enlarging $\Omega$, we can find a sequence of mutually independent filtrations $\mathcal F^j_t$, a sequence of Brownian motions $B^j_t$, adapted to $\mathcal F^j_t$, and corresponding $w^j$ defined on the same $(0,T)$ belonging to the corresponding $X_0^{j,T}(\xi)$, equivalent to $B_t$ and $w$; that is, such that
\begin{eqnarray}\label{equibw}
&\displaystyle \int_0^T \EE\big(|\dot w^j(t)|^2\big)dt=
\int_0^T \EE\big(|\dot w(t)|^2\big)dt,&
\\ \label{equibw-2}
 &\displaystyle \int_0^T \EE\big(V_{\rm per}(B^j_t+ w^j(t))\big)dt
= \int_0^T \EE\big(V_{\rm per}(B_t+ w(t))\big)dt.&
\end{eqnarray}
Indeed, we can replace $\Omega$ with $\Omega^\mathbb N$ with the product probability. After denoting $\pi_j$ the projection on the $j$-th factor, we define the filtrations 
$$
\mathcal F^j_t=\{ \pi_j^{-1}(A): A\in \mathcal F_t\}.
$$
Note that, given distinct indices $j_1,\ldots, j_m\in\mathbb N$, arbitrary times  $t_1,\ldots, t_m\in[0,T]$, and  a family $E_i\in \mathcal F^{j_i}_{t_i}$ we have
$$
P(E_1\cap\ldots\cap E_m)=\prod_{i=1}^mP(E_i);
$$
that is, $\mathcal F^j_t$ are independent. We then set
$$
B^j_t=B_t \circ \pi_j,\qquad w^j(t)=w(t)\circ \pi_j,
$$
both of which are adapted to $\mathcal F^j_t$. 
Note that \begin{equation}\label{cowj}
w^j(t)=w^j(T)\hbox{ for all $j$ and for all $t>T$}.
\end{equation}
Note that for almost every $t\ge 0$ the laws of $\dot w^j(t)$ and $B^j_t+w^j(t)$ coincide with the corresponding laws of $\dot w(t)$ and $B_t+w(t)$, from which we obtain
equalities \eqref{equibw} and \eqref{equibw-2}.

For all $j\in\mathbb N\cup\infty$ we also introduce the filtration
$$
\widetilde{\mathcal F}^j_t:=\sigma\Big(\bigcup_{i=1}^j\mathcal F^i_t\Big).
$$

\smallskip
To define $B^\xi_t$ and $w^\xi$ on the half line $(0,+\infty)$ we proceed by defining inductively Brownian motions $\widetilde B^j_t$ and functions $\widetilde w_j$, and then setting $B^\xi_t=\widetilde B^j_t$ and $w^\xi=\widetilde w_j$ on intervals $(\tau^\xi_{j-1},\tau^\xi_{j})$ between two suitable stopping times.

Let $\widetilde \tau_0=0$. We fix $r>0$ such that the modulus of continuity $\lambda$ for $V_{\rm per}$ satisfies $\lambda(r)<\eta/2$.
We consider the {\em stopping time} $\tau_1$ with respect to the filtration $\widetilde{\mathcal F}^2_t$ defined as the minimal $t\ge T$ such that
\begin{equation}\label{r-1}
{\rm dist}(B^1_t+w^1(T)-w^2(0),\mathbb Z^d)\le r.
\end{equation}
Note that in the scalar case $d=1$ this stopping time can be replaced by the one defined by the condition
\begin{equation}\label{simpz-1}
B^1_t+w^1(T)-w^2(0)\in\mathbb Z,
\end{equation}
avoiding the error term $r$. The same remark holds for the stopping times $\tau_j$ defined in the following.

For fixed $K>0$ there exits $\eta_K>0$, with $\eta_K\to 0$ as $K\to+\infty$, such that, having set $\Omega^1_K=\{\tau_1>T+ K\}$, we have  $P(\Omega^1_K)<\eta_K$  {(see Proposition \ref{apbrown})}. In other words we have $\tau_1\le T+ K$ on $\Omega\setminus\Omega^1_K$.

Let $\widetilde \tau_1=\tau_1\wedge (T+K)$, so that $T\le \widetilde \tau_1\le T+K$.
We set 
$$
 \widetilde B^1_t=B^1_t,\qquad\widetilde w^1(t) =w^1(t),
$$
recalling that $w^1(t)=w^1(T)$ for $t\ge T$, so that 
\begin{equation}\EE(\widetilde w^1(T))=\xi T+ \EE(w^1(0)).
\end{equation} 
Note that
$$
\int_0^{\widetilde \tau_1} \Big(\frac12|\dot{\widetilde w}^1(t)|^2+V_{\rm per}(\widetilde B^1_t+ {\widetilde w}^1(t))\Big)dt
=\int_0^{\widetilde \tau_1} \Big(\frac12|\dot{w}^1(t)|^2+V_{\rm per}(B^1_t+ {w}^1(t))\Big)dt
$$
$$
=\int_0^{T} \Big(\frac12|\dot{w}^1(t)|^2+V_{\rm per}(B^1_t+ {w}^1(t))\Big)dt
+\int_T^{\widetilde \tau_1} \Big(\frac12|\dot{w}^1(t)|^2+V_{\rm per}(B^1_t+ {w}^1(t))\Big)dt
$$
Hence, by \eqref{mineta},  \eqref{equibw} and \eqref{cowj}, we have
\begin{eqnarray*}
&&\hskip-1cm\EE\Big(\int_0^{\widetilde \tau_1} \Big(\frac12|\dot{\widetilde w}^1(t)|^2+V_{\rm per}(\widetilde B^1_t+ {\widetilde w}^1(t))\Big)dt\Big)
\\
&&= \int_0^T \EE\Big(\frac12|\dot w(t)|^2+V_{\rm per}(B_t+ w(t))\Big)dt
+\EE
\Big(\int_T^{\widetilde \tau_1} V_{\rm per}(B^1_t+ {w}^1(T))dt
\Big)
\\
&&\le \int_0^T \EE\Big(\frac12|\dot w(t)|^2+V_{\rm per}(B_t+ w(t))\Big)dt
+\EE
\Big(\int_T^{\widetilde \tau_1} |V_{\rm per}(B^1_t+ {w}^1(T))|dt
\Big)
\\
&&\le 
T\Big( f_{\rm hom}(\xi)+\frac\eta2\Big) +K\max V_{\rm per}.
\end{eqnarray*}

\smallskip
In order to illustrate the induction argument we preliminarily deal with $j=2$. We define $\widetilde w^2(t)$ and $\widetilde B^2_t$ as follows. We set 
$$
\widetilde w^2(t)=  \begin{cases}\widetilde w^1(t) &\hbox{ for } 0\le t\le \widetilde \tau_{1}\\
\widetilde w^1(\widetilde\tau_{1})+ w^2(t-\widetilde\tau_{1})-  w^2(0)
&\hbox{ for }t> \widetilde \tau_{1}.\end{cases}
$$
Observe that $\widetilde w^2\in H^1((0,R);L^2(\Omega;\mathbb R^d))$ for every $R>0$, and that for every $t>0$ the function $w^2(t)$ is $\widetilde{\mathcal F}^2_t$-measurable. Note moreover that, since $\widetilde w^1(\widetilde\tau_{1})=w^1(T)$, 
 \begin{equation}
 \widetilde w^2(t)=\widetilde w^2(T+\widetilde\tau_{1})=
w^1(T)+ w^2(T)-  w^2(0)\quad\hbox{ if }t\ge T+\widetilde\tau_{1};
 \end{equation}
in particular, 
 \begin{equation}
 \widetilde w^2(t)=\widetilde w^2(2T+K)=w^1(T)+ w^2(T)-  w^2(0)\quad\hbox{ if }t\ge 2T+K
 \end{equation}
 since 
 $\widetilde w^1(\widetilde\tau_{1})=w^1(T)$ and $w^2(2T+K-\widetilde\tau_{1})=w^2(T)$, recalling that $2T+K-\widetilde\tau_{1} \ge T$.
 
We also set
$$
\widetilde B^2_t = \begin{cases}\widetilde B^1_t  &\hbox{ for } 0\le t\le \widetilde \tau_{1} \\
 B^2_{t-\widetilde \tau_{1} } + \widetilde B^1_{\widetilde \tau_{1} }  &\hbox{ for }t> \widetilde \tau_{1} .\end{cases}
$$
By the strong Markov property this formula defines a Brownian motion on $[0,+\infty)$ adapted to the filtration $\widetilde{\mathcal F}^2_t$ (see Proposition \ref{concat}). This argument will be used iteratively in the following.

We define the stopping time $\tau_{2} $ with respect to the filtration $\widetilde{\mathcal F}^3_t$  as the minimal $t\ge 2T+K$ such that
$$
{\rm dist}(\widetilde B^2_t  + \widetilde w^2(t) - w^3(0) ,\mathbb Z^d)\le r,
$$
or, equivalently, such that
$$
{\rm dist}(\widetilde B^2_t  + \widetilde w^2(\widetilde\tau_{1} +T) - w^3(0) ,\mathbb Z^d)\le r,
$$
recalling that $w^2(t)=w^2(T)$ for $t\ge T$.
We then set $\widetilde\tau_{2} = \tau_{2} \wedge( 2T+ 2K),
$
so that $2T+K\le \widetilde \tau_2\le 2T+2K$.
Note that, having set $\Omega^2_K=\{\widetilde\tau_{2}\neq \tau_{2}\}$, we still have $P(\Omega^2_K)\le\eta_K$. 

Using the definitions of $\widetilde B^2_t$ and ${\widetilde w}^2$ we have
\begin{eqnarray*}
&&\int_0^{\widetilde \tau_2} \Big(\frac12|\dot{\widetilde w}^2(t)|^2+V_{\rm per}(\widetilde B^2_t+ {\widetilde w}^2(t))\Big)dt
\\&&
=\int_0^{\widetilde \tau_1} \Big(\frac12|\dot{\widetilde w}^2(t)|^2+V_{\rm per}(\widetilde B^2_t+ {\widetilde w}^2(t))\Big)dt +
\int_{\widetilde \tau_1}^{\widetilde \tau_2} \Big(\frac12|\dot{\widetilde w}^2(t)|^2+V_{\rm per}(\widetilde B^2_t+ {\widetilde w}^2(t))\Big)dt
\\
&&
=\int_0^{\widetilde \tau_1} \Big(\frac12|\dot{\widetilde w}^1(t)|^2+V_{\rm per}(\widetilde B^1_t+ {\widetilde w}^1(t))\Big)dt 
\\
&&\quad+
\int_{\widetilde \tau_1}^{\widetilde \tau_1+T} \Big(\frac12|\dot{w}^2(t-\widetilde \tau_1)|^2+V_{\rm per}(B^2_{t-\widetilde \tau_{1}}+ \widetilde B^1_{\widetilde \tau_{1}}+ \widetilde w^1(\widetilde\tau_{1})+ w^2(t-\widetilde\tau_{1})-  w^2(0))\Big)dt
\\
&&\quad
+\int_{\widetilde \tau_1+T}^{\widetilde \tau_2} \Big(\frac12|\dot{\widetilde w}^2(t)|^2+V_{\rm per}(\widetilde B^2_t+ {\widetilde w}^2(t))\Big)dt
\\
&&
=\int_0^{\widetilde \tau_1} \Big(\frac12|\dot{\widetilde w}^1(t)|^2+V_{\rm per}(\widetilde B^1_t+ {\widetilde w}^1(t))\Big)dt 
\\
&&\quad
+
\int_{0}^{T} \Big(\frac12|\dot{w}^2(t)|^2+V_{\rm per}(B^2_{t}+ \widetilde B^1_{\widetilde \tau_{1}}+ \widetilde w^1(\widetilde\tau_{1})+ w^2(t)-  w^2(0))\Big)dt
\\
&&\quad
+\int_{\widetilde \tau_1+T}^{\widetilde \tau_2} V_{\rm per}(\widetilde B^2_t+ {\widetilde w}^2(t))dt.
\end{eqnarray*}

On $\Omega\setminus\Omega^1_K$ we then have 
\begin{eqnarray*}
\int_{0}^{T} \Big(\frac12|\dot{w}^2(t)|^2+V_{\rm per}(B^2_{t}+ \widetilde B^1_{\widetilde \tau_{1}}+ \widetilde w^1(\widetilde\tau_{1})+ w^2(t)-  w^2(0))\Big)dt
\\
\le \int_{0}^{T} \Big(\frac12|\dot{w}^2(t)|^2+V_{\rm per}(B^2_{t}+w^2(t))\Big)dt +\lambda(r) T.
\end{eqnarray*}
Note that in dimension $1$ we can omit this error term.

On $\Omega^1_K$
\begin{eqnarray*}
&&
\int_{0}^{T} \Big(\frac12|\dot{w}^2(t)|^2+V_{\rm per}(B^2_{t}+ \widetilde B^1_{\widetilde \tau_{1}}+ \widetilde w^1(\widetilde\tau_{1})+ w^2(t)-  w^2(0))\Big)dt
\\&&=
\int_{0}^{T} \Big(\frac12|\dot{w}^2(t)|^2+V_{\rm per}(B^2_{t}+w^2(t))\Big)dt\\
&&\quad+\int_{0}^{T} \Big(V_{\rm per}(B^2_{t}+ \widetilde B^1_{\widetilde \tau_{1}}+ \widetilde w^1(\widetilde\tau_{1})+ w^2(t)-  w^2(0))-V_{\rm per}(B^2_{t}+w^2(t))\Big)dt
\\&&\le 
\int_{0}^{T} \Big(\frac12|\dot{w}^2(t)|^2+V_{\rm per}(B^2_{t}+w^2(t))\Big)dt+2T \max|V_{\rm per}|.
\end{eqnarray*}

Hence, 
\begin{eqnarray*}
&&
\EE\Big(\int_0^{\widetilde \tau_2} \Big(\frac12|\dot{\widetilde w}^2(t)|^2+V_{\rm per}(\widetilde B^2_t+ {\widetilde w}^2(t))\Big)dt\Big)
\\&&
\le 
2T (f_{\rm hom}(\xi)+\eta) +4K\max V_{\rm per} + 2T\eta_K\max|V_{\rm per}|,
\end{eqnarray*}
recalling that $\lambda(r)<\eta/2$.

\bigskip 
We now prove by induction that for all $j\ge 2$, there exist $\widetilde w^{j}$, $\widetilde B^{j}_t$, and $\widetilde \tau^{j}$ with the following properties:

\smallskip
(i) $\widetilde w^{j}\in H^1((0,R);L^2(\Omega;\mathbb R^d))$ for every $R>0$, $w^{j}(t)$ is $\widetilde{\mathcal F}^{j}_t$-measurable  for every $t>0$, 
 \begin{equation}\label{ii2}
 \widetilde w^j(t)=\widetilde w^j(  j T+(j-1)K) \hbox{ for } t\ge j T+(j-1)K,
 \end{equation}
and
 \begin{equation}\label{312}
 \widetilde w^j(jT+(j-1)K)=\sum_{\ell=1}^j(w^\ell(T)-  w^\ell(0)) +w^1(0);
 \end{equation}
 
\smallskip
(ii) $\widetilde B^{j}_t$ is Brownian motion adapted to the filtration $\widetilde{\mathcal F}^{j}_t$;

\smallskip
(iii) $\widetilde \tau_{j}$ is a stopping time with respect to the filtration $\widetilde{\mathcal F}^{j+1}_t$ such that 
 \begin{equation}\label{ii1}
jT+(j-1)K\le \widetilde \tau_{j}\le j T+ jK,
 \end{equation}
and
\begin{equation}\label{r-j}
{\rm dist}\big(\widetilde B^{j}_{\tau_{j} } + \widetilde w^{j}(\widetilde \tau_{j} ) - w^{j+1}(0) ,\mathbb Z^d\big)\le r
\end{equation}
except for a set $\Omega^{j}_K$ with $P(\Omega^{j}_K)< \eta_K$;

\smallskip
(iv) 
we have
\begin{eqnarray}\label{tre}\nonumber
&&\EE\Big(\int_{\widetilde \tau_{j-1}}^{\widetilde \tau_{j}} \Big(\frac12|\dot{\widetilde w}^{j}(t)|^2+V_{\rm per}(\widetilde B^{j}_t+ {\widetilde w}^{j}(t))\Big)dt\Big)
\\
&\le& 
T (f_{\rm hom}(\xi)+\eta) +2K\max V_{\rm per} +2T\eta_K\max|V_{\rm per}|.
\end{eqnarray}

\medskip The construction above guarantees that these properties hold for $j=2$.
Suppose that we have constructed $\widetilde w^{j}$, $\widetilde B^{j}_t$ and $\widetilde \tau^{j}$ satisfying (i)--(iv) above.
We now define  $\widetilde w^{j+1}$, $\widetilde B^{j+1}_t$ and $\widetilde \tau^{j+1}$ satisfying (i)--(iv) above with $j+1$ in the place of $j$.

\smallskip
(i) For every $t>0$ we define $\widetilde w^{j+1}(t) $ as
$$
\widetilde w^{j+1}(t) =  \begin{cases}\widetilde w^{j}(t)  &\hbox{ for } 0\le t\le \widetilde \tau_{j} \\
\widetilde w^{j}(\widetilde\tau_{j} ) + w^{j+1}(t-\widetilde\tau_{j} ) -  w^{j+1}(0) 
&\hbox{ for }t> \widetilde \tau_{j} .\end{cases}
$$
We observe that $\widetilde w^{j+1}\in H^1((0,R);L^2(\Omega;\mathbb R^d))$ for every $R>0$, and that for every $t>0$ the function $\omega\mapsto w^{j+1}(t) $ is $\widetilde{\mathcal F}^{j+1}_t$-measurable. Moreover, by \eqref{ii1} and \eqref{ii2},
 \begin{eqnarray*}
 \widetilde w^{j+1}((j+1)T+jK)&=& \widetilde w^{j}(\widetilde\tau_{j})+ w^{j+1}((j+1)T+jK-\widetilde\tau_{j})-  w^{j+1}(0)
\\
&=&
\widetilde w^{j}(jT+(j-1)K)+ w^{j+1}(T)-  w^{j+1}(0)
\\
&=&
 \sum_{\ell=1}^j(w^\ell(T)-  w^\ell(0))+w^1(0) + w^{j+1}(T)-  w^{j+1}(0),
 \end{eqnarray*}
and claim (i) is proved.

\smallskip
(ii) We set
$$
\widetilde B^{j+1}_t = \begin{cases}\widetilde B^{j}_t  &\hbox{ for } 0\le t\le \widetilde \tau_{j} \\
 B^{j+1}_{t-\widetilde \tau_{j} } + \widetilde B^{j}_{\widetilde \tau_{j} }  &\hbox{ for }t> \widetilde \tau_{j} .\end{cases}
$$
By the strong Markov property this formula defines a Brownian motion on $[0,+\infty)$ adapted to the filtration $\widetilde{\mathcal F}^{j+1}_t$.

\smallskip
(iii) We define the stopping time $\tau_{j+1} $ with respect to the filtration $\widetilde{\mathcal F}^{j+2}_t$  as the minimal $t\ge (j+1)T+jK$ such that
$$
{\rm dist}\big(\widetilde B^{j+1}_t+ \widetilde w^{j+1}(t)- w^{j+2}(0),\mathbb Z^d\big)\le r,
$$
or, equivalently, such that
$$
{\rm dist}\big(\widetilde B^{j+1}_t+ \widetilde w^{j+1}(\widetilde\tau_{j}+T)- w^{j+2}(0),\mathbb Z^d\big)\le r,
$$
recalling that $w^{j+1}(t)=w^{j+1}(T)$ for $t\ge T$.
Then, we set $$\widetilde\tau_{j+1} = \tau_{j+1} \wedge( (j+1)T+(j+1)K).$$ 
Note that, having set $\Omega^{j+1}_K=\{\widetilde\tau_{j+1}\neq\tau_{j+1}\}$, we still have $P(\Omega^{j+1}_K)<\eta_K$. 

\smallskip
(iv) We finally estimate
$$
\EE\Big(\int_{\widetilde \tau_{j}}^{\widetilde \tau_{j+1}} \Big(\frac12|\dot{\widetilde w}^{j+1}(t)|^2+V_{\rm per}(\widetilde B^{j+1}_t+ {\widetilde w}^{j+1}(t))\Big)dt\Big).
$$
To that end, we note that
\begin{eqnarray*}\nonumber
&&\int_{\widetilde \tau_{j}}^{\widetilde \tau_{j+1}} \Big(\frac12|\dot{\widetilde w}^{j+1}(t)|^2+V_{\rm per}(\widetilde B^{j+1}_t+ {\widetilde w}^{j+1}(t))\Big)dt\\ \nonumber
&=&\int_{\widetilde \tau_{j}}^{\widetilde \tau_{j}+T}\Big(\frac12|\dot{\widetilde w}^{j+1}(t)|^2+V_{\rm per}(\widetilde B^{j+1}_t+ {\widetilde w}^{j+1}(t))\Big)dt\\
&&+\int_{\widetilde \tau_{j}+T}^{\widetilde \tau_{j+1}}\Big(\frac12|\dot{\widetilde w}^{j+1}(t)|^2+V_{\rm per}(\widetilde B^{j+1}_t+ {\widetilde w}^{j+1}(t))\Big)dt,
\end{eqnarray*}
and that
\begin{eqnarray*}
&&
\int_{\widetilde \tau_{j}}^{\widetilde \tau_{j}+T}\Big(\frac12|\dot{\widetilde w}^{j+1}(t)|^2+V_{\rm per}(\widetilde B^{j+1}_t+ {\widetilde w}^{j+1}(t))\Big)dt
\\
&&=\int_{\widetilde \tau_{j}}^{\widetilde \tau_{j}+T}\Big(\frac12|\dot w^j(t-\widetilde\tau_{j})|^2\\
&&\qquad +V_{\rm per}(
B^{j+1}_{t-\widetilde \tau_{j}}+ \widetilde B^{j}_{\widetilde \tau_{j}}+ \widetilde w^{j}(\widetilde\tau_{j})+ w^{j+1}(t-\widetilde\tau_{j})-  w^{j+1}(0)\Big)dt
\\
&&
=\int_{0}^{T}\Big(\frac12|\dot w^{j+1}(t)|^2+V_{\rm per}(
B^{j+1}_{t}+ \widetilde B^{j}_{\widetilde \tau_{j}}+ \widetilde w^{j}(\widetilde\tau_{j})+ w^{j+1}(t)-  w^{j+1}(0)\Big)dt.
\end{eqnarray*}

On $\Omega\setminus \Omega^{j}_K$ by the inductive hypothesis (iii) we have
$$
\int_{0}^{T}\Big(\frac12|\dot w^{j+1}(t)|^2+V_{\rm per}(
B^{j+1}_{t}+ \widetilde B^{j}_{\widetilde \tau_{j}}+ \widetilde w^{j}(\widetilde\tau_{j})+ w^{j+1}(t)-  w^{j+1}(0)\Big)dt.
$$
$$
\le \int_{0}^{T}\Big(\frac12|\dot w^{j+1}(t)|^2+V_{\rm per}(B^{j+1}_{t}+  w^{j+1}(t))\Big)dt+\lambda(r)T,
$$
while on $\Omega^{j}_K$ we write 
\begin{eqnarray*}
&&\int_{0}^{T}\Big(\frac12|\dot w^{j+1}(t)|^2+V_{\rm per}(
B^{j+1}_{t}+ \widetilde B^{j}_{\widetilde \tau_{j}}+ \widetilde w^{j}(\widetilde\tau_{j})+ w^{j+1}(t)-  w^{j+1}(0)\Big)dt.
\\
&&
=\int_{0}^{T}\Big(\frac12|\dot w^{j+1}(t)|^2+V_{\rm per}(B^{j+1}_{t}+  w^{j+1}(t))\Big)dt
\\&&
+\int_{0}^{T}\Big(V_{\rm per}(
B^{j+1}_{t}+ \widetilde B^{j}_{\widetilde \tau_{j}}+ \widetilde w^{j}(\widetilde\tau_{j})+ w^{j+1}(t)-  w^{j+1}(0)\Big)-V_{\rm per}(B^{j+1}_{t}+  w^{j+1}(t))\Big)dt.
\end{eqnarray*}
Hence, recalling that $\lambda(r)<\eta/2$,
\begin{eqnarray}\label{stima-1}\nonumber
&&\EE\Big(\int_{\widetilde \tau_{j}}^{\widetilde \tau_{j}+T}\Big(\frac12|\dot{\widetilde w}^{j+1}(t)|^2+V_{\rm per}(\widetilde B^{j+1}_t+ {\widetilde w}^{j+1}(t))\Big)dt\Big)\\
&
\le&  T (f_{\rm hom}(\xi)+\eta) + T 2\eta_K \max|V_{\rm per}|.
\end{eqnarray}

Furthermore, we have 
\begin{eqnarray}\label{stima-2}\nonumber
\int_{\widetilde \tau_{j}+T}^{\widetilde \tau_{j+1}}\Big(\frac12|\dot{\widetilde w}^{j+1}(t)|^2+V_{\rm per}(\widetilde B^{j+1}_t+ {\widetilde w}^{j+1}(t))\Big)dt
&=&\int_{\widetilde \tau_{j}+T}^{\widetilde \tau_{j+1}}V_{\rm per}(\widetilde B^{j+1}_t+ {\widetilde w}^{j+1}(t))dt
\\
& \le& 2 K\max|V_{\rm per}|.
\end{eqnarray}
From \eqref{stima-1} and \eqref{stima-2} we obtain claim (iv) with $j$ replaced by $j+1$.

\medskip

We finally define $\mathcal F^\xi_t=\widetilde{\mathcal F}^\infty_t$, $B^\xi_t$ and $w^\xi$ on $(0,+\infty)$ as 
\begin{equation}
B^\xi_t= \widetilde B^{j}_t,\qquad w^\xi(t)= \widetilde w^{j} (t)\hbox{ for } t\le jT.
\end{equation}
Since $\widetilde \tau_j\ge jT$ and  we have $\widetilde B^{j}_t= \widetilde B^k_t$ and $\widetilde w^{j}=\widetilde w^k$ on $[0,\widetilde \tau_j]$ for $k\ge j$, this is a good definition. We observe that $B^\xi_t$ is a Brownian motion adapted to the filtration $\mathcal F^\xi_t$, and   $w^\xi\in H^1((0,R);L^2(\Omega;\mathbb R^d))$ for every $R>0$, and that for every $t>0$ the function $w^\xi(t)$ is $\mathcal F^\xi_t$-measurable. 
Eventually, \eqref{tre-b}  follows from \eqref{ii1} and \eqref{tre} by choosing $r$ small enough, and estimating
the integral on $(s_1,s_2)$ with the integral between $(\widetilde\tau_{j_1}, \widetilde\tau_{j_2})$, where 
$\widetilde\tau_{j_1}$ is the largest stopping time not greater than $s_1$ and 
$\widetilde\tau_{j_2}$ is the smallest stopping time not smaller than $s_2$.
\end{proof}

\begin{remark}[correctors in the one-dimensional case]\rm If $d=1$, as remarked in the proof, taking into account condition \eqref{simpz-1} and its iterations at every $j$, stopping times $\widetilde \tau_{j}$ can be assumed  to satisfy
$$
jT+(j-1)K\le \widetilde \tau_{j}\le j T+ jK,
$$
and
$$
B^\xi_{\tau_{j} } + w^\xi(\widetilde \tau_{j} ) - w^{j+1}(0) \in \mathbb Z^d
$$
except for a set $\Omega^{j}_K$ with $P(\Omega^{j}_K)< \eta_K$, avoiding the error term $r$.
\end{remark}

\section{Upper bound}\label{up}
We now prove claim (ii) of Theorem \ref{main}. 
 We begin with the case of an affine target function; that is, $v(t):=\xi t+x$, for  $t\in(a,b)$, with $x,\xi\in\mathbb R^d$. 

With fixed $\eta>0$ and $K$ we define  the correctors $\mathcal F^\xi_t$, $B^\xi_t$ and $w^\xi$ as in Proposition \ref{correct}.
We then define $v_\e$ and a new Brownian motion $B^\e_t$ on $(0,+\infty)$ by
\begin{equation}\label{veh}
v_\e(t)=\e\, w^\xi\Big(\frac{t}\e\Big)+\e\Big\lfloor \frac{x}\e\Big\rfloor,\quad\hbox{and}\quad B^\e_t= \sqrt\e B^\xi_{\frac{t}\e},
\end{equation}
where for every $y\in\mathbb R^d$ the symbol $\lfloor y\rfloor$ denotes the vector whose components are the integer parts of the components of $y$.

We observe that $B^\e_t$ is a Brownian motion adapted to the new filtration $\mathcal F^\e_t=\mathcal F^\xi_{t/\e}$, $v_\e\in H^1((a,b);L^2(\Omega;\mathbb R^d))$, and $v_\e(t)$ is $\mathcal F^\e_t$-measurable for every $t\in(a,b)$.

\begin{proposition}\label{pro1} 
There exists a constant $M>0$, independent of $K$ and $T$, such that 
$$
\EE\Big(\int_a^b|\dot v_\e(t)|^2dt\Big)\le M (b-a)(|\xi|^2+1)
$$
for every $0\le a<b<+\infty$ and $0<\e<(b-a)/(3T+3K)$. 
\end{proposition}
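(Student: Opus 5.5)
The bound reduces by scaling to a kinetic-energy estimate for the corrector $w^\xi$ of Proposition \ref{correct} on long time intervals. Only the kinetic part of the energy is needed, and it is controlled block by block by the energy of a single copy of the almost minimizer $w$.

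\emph{Step 1 (scaling).} From \eqref{veh} we have $\dot v_\e(t)=\dot w^\xi(t/\e)$. The change of variables $s=t/\e$ gives
$$
\EE\Big(\int_a^b|\dot v_\e(t)|^2dt\Big)=\e\,\EE\Big(\int_{a/\e}^{b/\e}|\dot w^\xi(s)|^2ds\Big).
$$
It therefore suffices to show that $\EE\int_{s_1}^{s_2}|\dot w^\xi|^2ds\le M(s_2-s_1)(|\xi|^2+1)$ whenever $s_2-s_1\ge 3T+3K$. This condition holds for $s_1=a/\e$, $s_2=b/\e$ by the assumption on $\e$.

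\emph{Step 2 (energy of one block).} By construction, on each random interval $(\widetilde\tau_{j-1},\widetilde\tau_j)$ we have $\dot w^\xi(t)=\dot w^j(t-\widetilde\tau_{j-1})$ for $t<\widetilde\tau_{j-1}+T$, and $\dot w^\xi=0$ afterwards, because $w^j$ is constant after $T$. Pathwise, then, $\int_{\widetilde\tau_{j-1}}^{\widetilde\tau_j}|\dot w^\xi|^2dt=\int_0^T|\dot w^j(s)|^2ds$, and by \eqref{equibw} its expectation equals $\int_0^T\EE|\dot w|^2dt$. No independence argument is needed here, since the random shift disappears pathwise.

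To bound this quantity, use \eqref{mineta}, the bound $V_{\rm per}\ge-\max|V_{\rm per}|$, and the estimate \eqref{13} on $f_{\rm hom}$:
$$
\frac12\int_0^T\EE|\dot w|^2dt\le T\big(f_{\rm hom}(\xi)+\tfrac\eta2+\max|V_{\rm per}|\big)\le T\big(\tfrac12|\xi|^2+2\max|V_{\rm per}|+\eta\big).
$$
This yields $\int_0^T\EE|\dot w|^2\le C_0T(|\xi|^2+1)$ with $C_0=\max\{1,4\max|V_{\rm per}|+2\eta\}$, taking, say, $\eta\le1$. The constant $C_0$ is independent of $T$ and $K$.

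\emph{Step 3 (counting blocks).} Consecutive stopping times satisfy $\widetilde\tau_j-\widetilde\tau_{j-1}\ge T$ by \eqref{ii1}. Hence, for every $\omega$, the interval $(s_1,s_2)$ is covered by at most $(s_2-s_1)/T+2$ blocks $(\widetilde\tau_{j-1},\widetilde\tau_j)$. Because this number is deterministic, we can sum over the (possibly random) indices $j$ for which the block meets $(s_1,s_2)$. Summing the block bound of Step 2 over all these indices, with each term nonnegative, gives
$$
\EE\int_{s_1}^{s_2}|\dot w^\xi|^2\le\Big(\frac{s_2-s_1}T+2\Big)C_0T(|\xi|^2+1)\le\frac53C_0(s_2-s_1)(|\xi|^2+1),
$$
where the last step uses $s_2-s_1\ge 3T$. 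Multiplying by $\e$ and substituting $s_2-s_1=(b-a)/\e$ gives the claim with $M=\frac53C_0$.

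The only delicate point is this summation over randomly located blocks. I would handle it by bounding the integral over $(s_1,s_2)$ by the sum over the fixed index range $j_1\le j\le j_2$, where $j_1=\lfloor s_1/(T+K)\rfloor$ and $j_2=j_1+\lceil (s_2-s_1)/T\rceil+2$. By \eqref{ii1} this index range contains every block that meets $(s_1,s_2)$, for all $\omega$. It contains at most $(s_2-s_1)/T+K(s_2-s_1)/T^2+O(1)$ indices. If the $K$-dependence of this count must be avoided, I would instead rely on the pathwise count above, bounding the sum by the sum over all indices $j$ whose block meets $(s_1,s_2)$.
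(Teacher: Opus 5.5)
\textbf{Steps 1 and 2 are correct.} They agree with the paper: the pathwise identity $\int_{\widetilde\tau_{j-1}}^{\widetilde\tau_j}|\dot w^\xi|^2dt=\int_0^T|\dot w^j|^2ds$, followed by \eqref{equibw}, \eqref{mineta} and \eqref{13}.

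\textbf{The gap is in Step 3,} exactly at the point you flag, and your final choice between your two options goes the wrong way. Write $X_j=\int_0^T|\dot w^j|^2ds$ and $J(\omega)=\{j:(\widetilde\tau_{j-1},\widetilde\tau_j)\cap(s_1,s_2)\neq\emptyset\}$. A deterministic bound $|J(\omega)|\le N$ does not imply $\EE\sum_{j\in J}X_j\le N\,\EE X_1$. A random index set correlated with the $X_j$ can pick out the large ones. For instance, if $J=\{j_*\}$ with $X_{j_*}=\max_{j\le n}X_j$, then $|J|=1$ but $\EE\sum_{j\in J}X_j=\EE\max_{j\le n}X_j$.

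Such correlation is not excluded here. The time $\widetilde\tau_j$ is determined by $\widetilde B^j$ and $\widetilde w^j$ after time $\widetilde\tau_{j-1}$, that is, by the $j$-th copy $(B^j,w^j)$ (and by $w^{j+1}(0)$). Meanwhile $\widetilde\tau_{j-1}$ depends on $w^j(0)$. So the event $\{j\in J\}$ is in general not independent of $X_j$. Hence the ``pathwise count'', which you name as your fallback, does not by itself give the estimate.

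\textbf{The fix is your other option,} summing over a deterministic superset of $J(\omega)$. Your count for it is wrong: by your own definition $j_2-j_1+1=\lceil (s_2-s_1)/T\rceil+3$, with no $K$ in it.

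This range contains every relevant block for every $\omega$, by \eqref{ii1}. First, $\widetilde\tau_j>s_1$ forces $j(T+K)>s_1$, hence $j>j_1$. Second, $\widetilde\tau_{j-1}<s_2$ forces $(j-1)(T+K)-K<s_2$, which together with $s_2-s_1\ge T$ gives $j<j_2$.

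Since $X_j\ge 0$ and $\EE X_j=\int_0^T\EE|\dot w|^2dt$ for every $j$, you get $\EE\int_{s_1}^{s_2}|\dot w^\xi|^2\le(\lceil (s_2-s_1)/T\rceil+3)\,C_0T(|\xi|^2+1)\le\frac73C_0(s_2-s_1)(|\xi|^2+1)$. This proves the claim with $M$ independent of $K$ and $T$.

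This is precisely the paper's argument, organized with the grid $t_j=jT+(j-1)K$. By \eqref{ii1}, the active piece $(\widetilde\tau_\ell,\widetilde\tau_\ell+T)$ of the copy started at $\widetilde\tau_\ell$ lies in the deterministic interval $[t_\ell,t_{\ell+1}]$. So a fixed range of at most $(s_2-s_1)/(T+K)+2$ indices covers $(s_1,s_2)$, and the expectation of the sum is then computed term by term.
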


\begin{proof}
With fixed  $0\le a<b<+\infty$, and  $0<\e<(b-a)/(3T+3K)$, let $j_a$ be the largest integer such that $\e (j_aT+(j_a-1)K)\le a$, and $j_b$ be the
 least integer such that $\e (j_bT+(j_b-1)K)\ge b$. 
  Then, after a change of variables, 
 we have
\begin{eqnarray*}
\EE\Big(\int_a^b|\dot v_\e(t)|^2dt\Big)&\le&\e
\EE\,\bigg({\sum_{\ell=j_a}^{j_b-1}}\int_{\widetilde\tau_{\ell}}^{\widetilde\tau_{\ell}+T}|\dot w^\xi(t)|^2dt \bigg)
=\e
\EE\bigg({\sum_{\ell=j_a}^{j_b-1}}\int_{0}^{T}|\dot w^\ell(t)|^2dt \bigg)\\
&
=&\e
(j_b-j_a)\EE\,\bigg(\int_{0}^{T}|\dot w(t)|^2dt \bigg)\le\Big(\frac{b-a}{T+K}+2\e\Big)\EE\Big(\int_{0}^{T}|\dot w(t)|^2dt\Big)\\
&\le& 
2\Big(\frac{b-a}{T+K}+2\e\Big)\Big(\int_{0}^{T}\EE\Big(\frac12|\dot w(t)|^2+V_{\rm per} (B_t+w(t))\Big)dt
+ T\max|V_{\rm per}|\Big)
\\
&\le& 
4(b-a) (|\xi|^2+\eta+ 2\max|V_{\rm per}|),
\end{eqnarray*}
using that $f_{\rm hom}(\xi)\le \frac12 |\xi|^2+\max|V_{\rm per}|$ and that $\e\le (b-a)/(3T+3K)$.
The claim is obtained by sending $\eta$ to $0$, and choosing e.g.~$M=4(1+2\max|V_{\rm per}|)$.
\end{proof}

\begin{proposition}\label{pro2} Let $0\le a<b<+\infty$.
For all sequences $\e_k\to 0$, the sequence $v_{\e_k}$ in \eqref{veh} converges in $(a,b)$ in the sense of Definition {\rm\ref{defco}} to the function given by $v_{\xi,T,K}$ given by 
$$
v_{\xi,T,K}(t)= x+\frac{T}{T+K} t \xi.
$$
\end{proposition}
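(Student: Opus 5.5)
Our plan is to verify the two requirements of Definition \ref{defco} directly: that every subsequence of $v_{\e_k}$ has a further subsequence converging weakly in $H^1((a,b);L^2(\Omega;\mathbb R^d))$ to some $\widehat v$, and that $\EE(\widehat v(t))=v_{\xi,T,K}(t)$ for every $t$. Since weak limits are unique once they exist, it is enough to obtain boundedness and then identify the mean of any limit point.

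\emph{Step 1: boundedness.} Proposition \ref{pro1} gives $\EE\int_a^b|\dot v_{\e}|^2dt\le M(b-a)(|\xi|^2+1)$ for $\e$ small. For the $L^2$ part we also need a bound on $v_\e(a)$. We write $v_\e(a)=\e w^\xi(a/\e)+\e\lfloor x/\e\rfloor$ and use $w^\xi(0)=w^1(0)$. Applying Proposition \ref{pro1} on $(0,a)$ when $a>0$, or trivially when $a=0$, together with
$$
\e w^\xi(a/\e)=\e w^1(0)+\int_0^a\dot v_\e\,dt ,
$$
we get a uniform $L^2(\Omega)$ bound, because $\e w^1(0)\to0$ in $L^2$. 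Hence $v_\e$ is bounded in $H^1((a,b);L^2)$, which is a Hilbert space, so weakly convergent subsequences exist.

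\emph{Step 2: identification of the mean.} The map $u\mapsto\EE(u(t))$ is continuous and linear on $H^1((a,b);L^2)$, by the embedding into $C([a,b];L^2)$. Therefore $\EE(\widehat v(t))=\lim_k\EE(v_{\e_k}(t))$, and it suffices to prove the deterministic convergence $m_\e(t):=\EE(v_\e(t))\to x+\frac{T}{T+K}t\xi$. We have $\e\lfloor x/\e\rfloor\to x$. For the remaining part $\e\EE(w^\xi(t/\e))$, Proposition \ref{correct} gives $\EE(w^\xi(jT+(j-1)K))-\EE(w^\xi(0))=jT\xi$. Take $j=j_\e(t)$ to be the largest index with $\e(jT+(j-1)K)\le t$. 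Then $\e j\to t/(T+K)$, so $\e jT\xi\to\frac{T}{T+K}t\xi$, while $\e\EE(w^\xi(0))\to0$.

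The main obstacle is controlling the remainder
$$
\e\,\EE\big(w^\xi(t/\e)-w^\xi(jT+(j-1)K)\big),
$$
since the concatenation times $\widetilde\tau_\ell$ are random and $w^\xi$ need not be monotone. By Cauchy--Schwarz this remainder is bounded by
$$
\e\,\EE\int_{s_1}^{s_2}|\dot w^\xi|\,ds\le \e\,(s_2-s_1)^{1/2}\Big(\EE\int_{s_1}^{s_2}|\dot w^\xi|^2ds\Big)^{1/2},
$$
where $s_2-s_1\le T+K$. By \eqref{tre-b}, or by the same counting as in the proof of Proposition \ref{pro1}, the energy $\EE\int_{s_1}^{s_2}|\dot w^\xi|^2$ over a window of fixed length is bounded by a constant $C(\xi,T,K,\eta)$, because the integrand $V_{\rm per}$ is bounded. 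Hence the remainder is $O(\e)$. Convergence of $m_\e(t)$ therefore holds for every $t\in[a,b]$, and this identifies $\EE(\widehat v(t))=v_{\xi,T,K}(t)$ for every $t$, independently of the subsequence, which completes the argument.
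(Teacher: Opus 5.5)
Your proposal is correct and follows essentially the same route as the paper: compactness from Proposition~\ref{pro1}, the exact mean at the grid times $\e_k(jT+(j-1)K)$ from \eqref{312}, and interpolation to arbitrary $t$ by a Cauchy--Schwarz estimate on the time derivative. The differences are only refinements. You make explicit the $L^2(\Omega)$ bound on $v_\e(a)$ and the continuity of $u\mapsto\EE(u(t))$, both of which the paper leaves implicit, and you interpolate with a local window bound giving an $O(\e)$ error instead of the paper's global equi-H\"older bound giving $O(\sqrt{\e})$.
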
 

\begin{proof}
Since $\e\lfloor \frac{x}\e\rfloor$ converges to $x$ in $\mathbb R^d$, it suffices to consider the case $x=0$.
By Proposition \ref{pro1}, up to subsequences, we can suppose that $v_{\e_k}\wto \widehat v$ weakly in $H^1((a,b);L^2(\Omega;\mathbb R^d))$.
To conclude we have only to prove that $\EE(\widehat v(t)) = v_{\xi,T,K}(t)$. 

We note that for all $j\in\mathbb N$, setting $t^k_j:=\e_k(jT+(j-1)K)$, by \eqref{312} we have
$$
v_{\e_k}( t^k_j) =\e_k w^1(0)+ \e_k\sum_{\ell=1}^j (w^\ell(T)-w^\ell(0)),
$$
so that,   since $w^\ell\in X_0^{\ell,T}(\xi)$,
\begin{eqnarray*}
\EE(v_{\e_k}( t^k_j))&=&\e_k\EE(w^1(0))+ \e_k\sum_{\ell=1}^j\EE (w^\ell(T)-w^\ell(0))
\\
&=&\e_k\EE(w(0))+ \e_k j \EE (w(T)-w(0))
\end{eqnarray*}
and, recalling that  $w\in X_0^T(\xi)$, we have 
$$
\EE(v_{\e_k}( t^k_j))=\e_k\EE(w^1(0))+ \e_kj T \xi=\e_k\EE(w^1(0)) +v_{\xi,T,K}(t^k_j) +{\e_k\frac{K}{T+K}}.
$$
  Since for all $t\in(a,b)$ and for all $k$ there exists $t^k_j$ such that $|t-t^k_j|\le \e_k(T+K)$, by the equi-H\"older continuity of $t\mapsto \EE(v_{\e_k}( t))$ {(see Proposition \ref{pro1})}  we have
$$
|\EE(v_{\e_k}( t))- \EE(v_{\e_k}( t^k_j))|\le \sqrt{M\e_k(T+K)(|\xi|^2+1)}
$$
We then obtain the claim as $k\to+\infty$. 
\end{proof}

\begin{proposition}\label{pro3}
With fixed $0\le a<b$, we have 
$$
\limsup_{\e\to 0}F_\e(v_\e,\mathcal F^\e_t,B^\e_t;{(a,b)})\le 
(b-a)\Big(f_{\rm hom}(\xi)+\eta+ \Big(2\frac{K}{T}+2\eta_K\Big)\max|V_{\rm per}|\Big).
$$
\end{proposition}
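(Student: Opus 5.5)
The plan is to reduce everything, by a change of variables, to the corrector estimate \eqref{tre-b} of Proposition \ref{correct} on the long interval $(a/\e,b/\e)$. By the definition \eqref{veh}, $\dot v_\e(t)=\dot w^\xi(t/\e)$. Moreover
$$
\frac{\sqrt\e B^\e_t+v_\e(t)}{\e}=B^\xi_{t/\e}+w^\xi(t/\e)+\Big\lfloor\frac x\e\Big\rfloor ,
$$
where we use $\sqrt\e B^\e_t=\e B^\xi_{t/\e}$. Since $\lfloor x/\e\rfloor\in\mathbb Z^d$ and $V_{\rm per}$ is $\mathbb Z^d$-periodic, the integer shift disappears inside $V_{\rm per}$. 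The substitution $s=t/\e$ then gives
$$
F_\e(v_\e,\mathcal F^\e_t,B^\e_t;(a,b))=\e\,\EE\Big(\int_{a/\e}^{b/\e}\Big(\tfrac12|\dot w^\xi(s)|^2+V_{\rm per}(B^\xi_s+w^\xi(s))\Big)ds\Big).
$$
Here we use Fubini to exchange $\EE$ and the time integral, which is legitimate because $w^\xi\in H^1((0,R);L^2(\Omega))$ and $V_{\rm per}$ is bounded.

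Next, apply \eqref{tre-b} with $s_1=a/\e$ and $s_2=b/\e$. This yields
$$
F_\e\le (b-a)\Big(f_{\rm hom}(\xi)+\eta+\big(3\tfrac KT+2\eta_K\big)\max|V_{\rm per}|\Big)+\e\, C(\xi,T,K,\eta).
$$
The constant $C$ does not depend on $\e$, so the last term vanishes as $\e\to0$. This already gives the statement, except that the coefficient is $3K/T$ instead of $2K/T$.

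The main obstacle is sharpening $3K/T$ to $2K/T$. To do this I would not quote \eqref{tre-b} directly; instead I would redo its final step. Cover $(a/\e,b/\e)$ by the cycles $(\widetilde\tau_{j-1},\widetilde\tau_j)$ for $j$ between $j_a$ and $j_b$, in the notation of the proof of Proposition \ref{pro1}. Each cycle has length at least $T+K-K=T$ by \eqref{ii1}, and the number of cycles is at most $(b-a)/(\e T)+2$. Each cycle costs at most
$$
T\big(f_{\rm hom}(\xi)+\eta\big)+2K\max|V_{\rm per}|+2T\eta_K\max|V_{\rm per}|,
$$
by \eqref{tre}. Multiplying by the number of cycles and by $\e$ gives
$$
(b-a)\Big(f_{\rm hom}(\xi)+\eta+\big(2\tfrac KT+2\eta_K\big)\max|V_{\rm per}|\Big)+O(\e).
$$
The $O(\e)$ term collects two contributions: the two boundary cycles, and the replacement of the cycle-by-cycle integral by the integral on $(a/\e,b/\e)$. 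The latter is controlled because the integrand is bounded below by $-\max|V_{\rm per}|$, so the extra pieces cost at most $O(T+K)$ in the $s$ variable.

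One point needs care, namely whether $f_{\rm hom}(\xi)+\eta$ can be negative, since then counting cycles by the upper bound $(b-a)/(\e T)+2$ is not automatically safe. This is handled by first adding $\max|V_{\rm per}|$ to the integrand, which makes each cycle's contribution nonnegative so the count can be used, and then subtracting the exact length term afterwards. Taking $\limsup_{\e\to0}$ concludes the proof.
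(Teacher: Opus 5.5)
Your first paragraph is exactly the paper's proof. It uses the same change of variables, absorbs the shift $\lfloor x/\e\rfloor$ by periodicity, applies \eqref{tre-b} with $s_1=a/\e$, $s_2=b/\e$, and lets $\e\,C(\xi,T,K,\eta)\to0$. As you noticed, this only yields the coefficient $3K/T$. The paper's own proof ends with precisely this $3K/T$ bound and then asserts the claim, so the factor $2$ is not justified there either. This is harmless, since the recovery-sequence corollary only needs the $K/T$ term to vanish as $T_j\to+\infty$.

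The gap is in your sharpening. Set $M=\max|V_{\rm per}|$ and let $\beta=T(f_{\rm hom}(\xi)+\eta)+2KM+2T\eta_KM$ be the per-cycle bound from \eqref{tre}. The index range $j_a\le j\le j_b$ is deterministic, by \eqref{ii1} its cycles cover $(a/\e,b/\e)$, and its cardinality is $N=\frac{b-a}{\e(T+K)}+O(1)$. If $\beta\ge0$, replacing $N$ by the larger $(b-a)/(\e T)+2$ is legitimate and gives exactly $\beta/T$, the stated constant.

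But $\beta<0$ does occur, because $f_{\rm hom}$ can be as low as $-M$ (e.g.\ $V_{\rm per}\equiv-1$, $\xi=0$, $T\gg K$), and there your device fails:
\begin{itemize}
\item Adding $M$ to the integrand raises the bound for the $j$-th cycle by $M\,\EE(\widetilde\tau_j-\widetilde\tau_{j-1})$. Any uniform bound $L$ for these expected lengths is at least their average, which is $T+K$ up to $o(1)$ by \eqref{ii1}.
\item Multiplying by the overcount $(b-a)/(\e T)$ and subtracting the exact term $(b-a)M$ leaves an extra $(b-a)M(L-T)/T\ge(b-a)MK/T$, up to $o(1)$. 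You are back at $3K/T$, or at $4K/T$ with the crude choice $L=T+2K$.
\item If instead you telescope the added lengths, the $M$-terms cancel up to boundary terms. You are left with $N\beta$, $\beta<0$, which is the original difficulty.
\end{itemize}
In fact no counting device can work. Using \eqref{tre} cycle by cycle, the best possible rate is $\beta/(T+K)$, which is strictly larger than the target $\beta/T$ when $\beta<0$.

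The missing idea is that the idle time per cycle is $K$ on average, not $2K$, so \eqref{tre} has to be opened up. Bound the active part $(\widetilde\tau_{j-1},\widetilde\tau_{j-1}+T)$ of each cycle by \eqref{stima-1}, i.e.\ by $T(f_{\rm hom}(\xi)+\eta+2\eta_KM)$. Bound the idle part by $M\,\EE(\widetilde\tau_j-\widetilde\tau_{j-1}-T)$. By \eqref{ii1} the idle lengths telescope to at most $(N+1)K$, so the sum over the cycles is at most $N\gamma+KM$ with $\gamma=T(f_{\rm hom}(\xi)+\eta+2\eta_KM)+KM$. Since $N=\frac{b-a}{\e(T+K)}+O(1)$ holds as a two-sided estimate, this gives the rate $\gamma/(T+K)$ whatever the sign of $\gamma$. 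Finally, $f_{\rm hom}(\xi)\ge\min V_{\rm per}\ge-M$ yields
\[
\frac{\gamma}{T+K}=\frac{T}{T+K}\bigl(f_{\rm hom}(\xi)+\eta+2\eta_KM\bigr)+\frac{K}{T+K}M\le f_{\rm hom}(\xi)+\eta+2\eta_KM+\frac{2K}{T+K}M,
\]
which is below the stated bound.
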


\begin{proof} 
Using a change of variables and the periodicity of $V_{\rm per}$, by \eqref{tre-b}  with $s_1=a/\e$ and $s_2=b/\e$ we obtain
\begin{eqnarray*}
\limsup_{\e\to0} F_\e(v_\e,\mathcal F^\e_t,B^\e_t; (a,b))
&=& \limsup_{\e\to0}\e\, \EE\Big(\int_{a/\e}^{b/\e} \Big(\frac12|\dot{w}{}^\xi(t)|^2+V_{\rm per}(B^\xi_t+ w^\xi(t))\Big)dt\Big)\\
&\le&{
(b-a)\Big( f_{\rm hom}(\xi)+\eta +\Big(3\frac{K}T+{2}\eta_K\Big)\max|V_{\rm per}|\Big)},
\end{eqnarray*}
and the claim
\end{proof}

\begin{corollary}[recovery sequence for an affine target function] For all $\xi$ and $x$ and for all $\delta>0$ there exists a sequence $\overline v_\e\to x+ \xi t$ and such that 
$$
\limsup_{\e\to 0}F_\e(\overline v_\e,\mathcal F^\e_t,B^\e_t; (a,b))\le  f_{\rm hom}(\xi)(b-a).
$$
\end{corollary}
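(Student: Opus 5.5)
The key issue is that the sequence $v_\e$ built in \eqref{veh} from the correctors of Proposition \ref{correct} does not converge to $x+\xi t$. By Proposition \ref{pro2} it converges to $x+\frac{T}{T+K}t\xi$, because of the idle intervals of length $K$ between consecutive copies. I will correct this by building the corrector for a rescaled slope and then using a diagonal argument in $(\eta,K,T)$. Everything relies on the continuity of $f_{\rm hom}$, which I will establish first.

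\emph{Step 1: convexity and continuity of $f_{\rm hom}$.} The function $f_{\rm hom}$ is finite and bounded above by \eqref{13}. It is bounded below by $-\max|V_{\rm per}|$. I plan to show that it is convex. The lower bound already proved, claim (i) of Theorem \ref{main}, together with the fact that $\Gamma$-limits are lower semicontinuous, gives that $F_{\rm hom}$ is lower semicontinuous. The quickest route to convexity, however, is to concatenate two independent almost-minimizers with slopes $\xi_1$ and $\xi_2$ on time intervals in proportion $\theta:(1-\theta)$. This uses the same independent-copy and stopping-time gluing as in Proposition \ref{correct}. The gluing errors are of order $K\max|V_{\rm per}|/T$ and $\eta_K$, so they vanish in the limit. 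A finite convex function on $\mathbb R^d$ is continuous, which is all that is needed below.

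\emph{Step 2: adjusted slope.} Fix $\xi$ and $x$, and set $\xi'=\frac{T+K}{T}\xi$. Apply Proposition \ref{correct} with $\xi'$ in place of $\xi$, and let $\overline v_\e$ be the corresponding function in \eqref{veh}. Propositions \ref{pro1} and \ref{pro2} show that $\overline v_\e\to x+\xi t$ in the sense of Definition \ref{defco}. Proposition \ref{pro3} then gives
$$
\limsup_{\e\to0}F_\e(\overline v_\e,\mathcal F^\e_t,B^\e_t;(a,b))\le (b-a)\Big(f_{\rm hom}\big(\tfrac{T+K}{T}\xi\big)+\eta+\big(3\tfrac KT+2\eta_K\big)\max|V_{\rm per}|\Big).
$$

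\emph{Step 3: choice of parameters and diagonalization.} Given $\delta>0$, I choose the parameters in the following order. First take $\eta<\delta/4$. Then take $K$ large enough that $2\eta_K\max|V_{\rm per}|<\delta/4$. Next, using the continuity from Step 1, take $T_0$ so large that every $T>T_0$ satisfies both $3\frac KT\max|V_{\rm per}|<\delta/4$ and $f_{\rm hom}(\frac{T+K}{T}\xi)<f_{\rm hom}(\xi)+\delta/4$. The proposition supplies some $T>T_0$, and with that $T$ the right-hand side above is at most $(b-a)(f_{\rm hom}(\xi)+\delta)$.

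The stated bound without $\delta$ follows by a diagonal argument along a sequence $\delta_n\to0$. The sequence must be chosen so that the recovery sequences remain bounded in $H^1$ uniformly in $n$. Proposition \ref{pro1} provides this, since its constant $M$ does not depend on $K$ or $T$ and $|\xi'|\le 2|\xi|$. Because the bound is uniform, the convergence in the sense of Definition \ref{defco} passes to the diagonal sequence.

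The main obstacle is Step 1, the continuity of $f_{\rm hom}$ at $\xi$. If a convexity argument turns out to be delicate, an alternative is to avoid the need for continuity. One could modify the corrector so that there are no idle intervals in expectation: on the waiting interval $(\widetilde\tau_j,\,\widetilde\tau_j+\cdot)$, let $w$ move linearly with velocity $\xi$. The extra cost is $\frac12|\xi|^2 K$ per cell, which is negligible relative to $T$. The price is that the stopping-time condition must then be stated relative to the moving position.
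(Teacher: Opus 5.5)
\textbf{There is a genuine gap in Steps 2--3.}

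In Proposition \ref{correct} the horizon $T$ is not a free parameter. The proposition chooses it for a slope fixed in advance, namely as some $T>T_0$ at which $\frac1T\min\{\cdots\}$ lies within $\eta/2$ of the $\liminf$ in \eqref{fomd} at that slope. The set of such $T$ is unbounded, but it need not contain all large $T$.

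With $\xi'=\frac{T+K}{T}\xi$, the slope depends on the very $T$ the proposition is supposed to return. So you need a $T$ for which the finite-horizon minimum at slope $\frac{T+K}{T}\xi$ is close to $f_{\rm hom}(\xi)$, and nothing you cite provides one. If the proposition returns some $T'\neq T$, Proposition \ref{pro2} gives the limit $x+\frac{T'(T+K)}{T(T'+K)}\xi t\neq x+\xi t$.

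Continuity of $f_{\rm hom}$ does not close this, because it says nothing about the finite-horizon minima uniformly in the slope. What you would actually need is a direct estimate comparing $\frac1T\min$ at slope $\frac{T+K}{T}\xi$ with $\frac1T\min$ at slope $\xi$, for instance via a short terminal correction near $t=T$. That estimate would make Step 1 superfluous.

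Step 1 is also not available at this point, for two reasons.
\begin{itemize}
\item The paper obtains convexity only later (Corollary \ref{coro5}), from the piecewise-affine upper bound. That bound rests on the same diagonal argument as the present corollary, so citing it here would be circular.
\item Your direct concatenation of $\xi_1$- and $\xi_2$-blocks runs into exactly the defect you are trying to remove. The waiting intervals of length up to $K$ destroy the exact constraint $\EE(w(S)-w(0))=S\xi$ that defines $X_0^S(\xi)$. You cannot restore it by rescaling slopes without already knowing continuity.
\end{itemize}
Also, Theorem \ref{main}(i) alone does not make $F_{\rm hom}$ a $\Gamma$-limit, so it gives no lower semicontinuity. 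Your fallback of moving with velocity $\xi$ during the waiting intervals is plausible. However, it needs a version of Proposition \ref{apbrown} for Brownian motion with drift, and you do not carry it out.

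\textbf{The missing observation} is that you never need the limit to be exactly $x+\xi t$ for fixed parameters. Keep the slope $\xi$. Fix $\eta=\delta/4$ and $K$ with $2\eta_K\max|V_{\rm per}|\le\delta/4$. Then let $T=T_j\to+\infty$ along a sequence realizing the $\liminf$ in \eqref{fomd} at $\xi$.
\begin{itemize}
\item By Proposition \ref{pro2}, the corresponding $v^j_\e$ converge to $x+\frac{T_j}{T_j+K}\xi t$, which tends to $x+\xi t$ strongly in $H^1$ as $j\to+\infty$.
\item The error term $3\frac K{T_j}\max|V_{\rm per}|$ in Proposition \ref{pro3} vanishes as $j\to+\infty$.
\item The uniform bound of Proposition \ref{pro1} allows a diagonal choice in $(j,\e)$. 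For bounded sequences, convergence in the sense of Definition \ref{defco} is just weak convergence of $\EE(v_k)$ in $H^1((a,b);\mathbb R^d)$, which is metrizable on bounded sets.
\end{itemize}
This is the paper's proof. It uses neither convexity nor continuity of $f_{\rm hom}$, and you already had every ingredient in your Step 3: the diagonal in $T$ simply replaces the slope rescaling.
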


\begin{proof} Let $\eta=\delta/4$, and let $K$ be large enough so that $2\eta_K\max|V_{\rm per}|\le \delta/4$.
Let $T_j$ be a diverging sequence such that 
\begin{eqnarray*}
&&f_{\rm hom}(\xi)=\lim_{j\to+\infty} \frac1{T_j}\inf\bigg\{\int_0^{T_j} \EE\Big(\frac12|\dot w(t)|^2+V_{\rm per}(B^j_t+ w(t))\Big)dt: w\in X_0^{T_j}(\xi),  B^j_t ,  \mathcal F_t \bigg\}.
\end{eqnarray*}

By Proposition \ref{pro2} for all fixed $j$, the sequence $v^j_{\e}$ defined therein with $T=T_j$ converges in the sense of Definition {\rm\ref{defco}} to the function given by $v_{\xi,T,K}$ given by 
$$
v^j_{\xi,K}(t)= x+ \frac{T_j}{T_j+K} t \xi .
$$
Moreover, letting $\mathcal F^{j,\e}_t$ and $B^{j,\e}$ be the corresponding scaled filtration and Brownian motion,
by Proposition \ref{pro3} we have 
$$
\limsup_{\e\to 0}F_\e(v^j_\e,\mathcal F^{j,\e}_t,B^{j,\e}_t; (a,b))\le \Big(f_{\rm hom}(\xi)+\frac\delta2+ 2\frac{K}{T_j}\max|V_{\rm per}|\Big)(b-a).
$$
The conclusion follows by letting $j\to+\infty$ and using a diagonal argument.
\end{proof}

\begin{proposition}\label{pro5}
With fixed $0\le a<b$, and $v$ piecewise affine, there exists $v_\e$ converging to $v$ in the sense of Definition {\rm\ref{defco}}, filtrations $\mathcal F^\e_t$, and Brownian motions $B^\e_t$ such that
$$
\limsup_{\e\to 0}F_\e(v_\e,\mathcal F^\e_t,B^\e_t; (a,b))\le 
\int_a^b f_{\rm hom}(v'(t))dt.
$$
\end{proposition}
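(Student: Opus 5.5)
Let $a=t_0<t_1<\dots<t_N=b$ be such that $v(t)=x_i+\xi_i t$ on each $(t_{i-1},t_i)$. The plan is to build the recovery sequence interval by interval: on each $(t_{i-1},t_i)$ use the affine recovery construction of the Corollary. The pieces are then glued in time by concatenating the filtrations, the Brownian motions and the functions, exactly as in the proof of Proposition \ref{correct}.

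\textbf{Step 1: recovery on each interval.} Fix $\delta>0$. For each $i$ the Corollary (with $T=T_j$ large and $K$ fixed) provides an affine recovery triple on $(t_{i-1},t_i)$. Its energy is at most $(f_{\rm hom}(\xi_i)+\delta)(t_i-t_{i-1})$ in the limit, and its expectation converges to an affine function with slope $\frac{T}{T+K}\xi_i$. To avoid slope mismatches I would run the construction with $\xi_i$ replaced by $\frac{T+K}{T}\xi_i$. The cost of this replacement vanishes as $T\to\infty$ by the upper bound \eqref{13} and the local boundedness of $f_{\rm hom}$; the convexity of $f_{\rm hom}$ is not yet available, but the $\limsup$ is only needed along $T_j$ with $K/T_j\to0$.

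\textbf{Step 2: gluing in time.} Take the pieces on a product space $\Omega^{N}$ (or $\Omega^{\mathbb N}$), so that the building blocks of different intervals are independent. Set $B^\e_t$ equal to the $i$-th Brownian motion, restarted at $t_{i-1}$ from the value reached at $t_{i-1}$. Since $t_{i-1}$ is deterministic, this concatenation is a Brownian motion for the joined filtration by Proposition \ref{concat}, and no strong Markov property is needed. Define $v_\e$ on $(t_{i-1},t_i)$ as $v_\e(t_{i-1})$ plus the increment of the $i$-th corrector function $\e w^{\xi_i}((t-t_{i-1})/\e)$. This makes $v_\e$ continuous, hence in $H^1((a,b);L^2)$, and adapted.

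The potential term contains the shifted argument $(\sqrt\e B^\e_t+v_\e(t))/\e$. The mismatch relative to the original corrector is the random vector $(\sqrt\e B^\e_{t_{i-1}}+v_\e(t_{i-1}))/\e$ modulo $\mathbb Z^d$. This vector is independent of the $i$-th block, whereas the corrector of Proposition \ref{correct} assumed a lattice-compatible start. I would handle it with the same device as in that proof: after $t_{i-1}$, insert a waiting period of length at most $\e K$ during which $v_\e$ is kept constant. The period ends at the stopping time when the Brownian motion brings the shift to within distance $r$ of $\mathbb Z^d$, truncated at $\e K$. Its energy cost is at most $2\e K\max|V_{\rm per}|$ plus $\eta_K$-weighted errors, and these vanish as $\e\to0$. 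Alternatively, the estimate \eqref{tre-b} already holds with arbitrary $s_1,s_2$ and an additive constant $C$, which after scaling by $\e$ becomes $\e C\to0$. The only genuinely new point is this initial shift, which I expect is absorbed by exactly this first-stopping-time trick.

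\textbf{Step 3: convergence and the energy bound.} Convergence $v_\e\to v$ in the sense of Definition \ref{defco} follows as in Propositions \ref{pro1}--\ref{pro2}. The $H^1$ bounds are uniform on each piece. The expectations at the nodes $t_i$ add up to $v(t_i)$ up to $O(\e)$, and equi-Hölder continuity gives convergence at all $t$. The energy splits as a sum over the intervals, each bounded via Proposition \ref{pro3}, giving $\limsup F_\e\le\int_a^b f_{\rm hom}(v')\,dt+C\delta$. A diagonal argument in $\delta$, $K$ and $T_j$ concludes.

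\textbf{Main obstacle.} The delicate point is the interface between pieces, namely the control of the random initial lattice shift and the slope correction $\frac{T}{T+K}$. The diagonal procedure must let $K\to\infty$ and $K/T\to0$ simultaneously while staying consistent across all $N$ intervals.
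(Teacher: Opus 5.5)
Your gluing device is essentially the paper's. After each breakpoint $a_i/\e$ (your $t_i/\e$), the paper waits until the stopping time at which ${\rm dist}(B^{\e,i}_t+w^\e_i(a_i/\e)-w^{\xi_{i+1}}(0),\mathbb Z^d)\le r$, truncated at $a_i/\e+K$. It then starts an independent copy of the corrector of Proposition \ref{correct}. Freezing $v_\e$ during the wait, as you propose, matches that stopping condition, which is written in terms of the value at the breakpoint.

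The step that fails as written is the slope replacement in Step 1. Running the construction with $\frac{T+K}{T}\xi_i$ in place of $\xi_i$ costs nothing in the limit only if $\limsup_{T\to\infty}f_{\rm hom}(\frac{T+K}{T}\xi_i)\le f_{\rm hom}(\xi_i)$, that is, only if $f_{\rm hom}$ is upper semicontinuous at $\xi_i$. Neither \eqref{13} nor local boundedness gives this. The convexity that would give it (Corollary \ref{coro5}) is derived from the very proposition you are proving.

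There is also a circularity. Proposition \ref{correct} chooses $T$ as a function of the slope, so applied to $\frac{T+K}{T}\xi_i$ it returns some other $T'$. The resulting limit slope $\frac{T'(T+K)}{(T'+K)T}\xi_i$ is again not $\xi_i$. To make the replacement work you would need an estimate at fixed $T$ for the cell problem. Writing $m_T(\zeta)$ for the infimum in \eqref{fomd} at time $T$, you need $\frac1Tm_T(\xi_i+\frac KT\xi_i)\le\frac1Tm_T(\xi_i)+o(1)$ as $T\to\infty$. This is true: add $K\xi_i$ linearly on a final subinterval of length $s$ with $1\ll s\ll T$, in the spirit of Lemma \ref{lemmada}. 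But it is not what you argued.

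The simpler route, which is the paper's, is to drop the replacement. Keep the slopes $\xi_i$ and let $v_\e$ converge to the piecewise affine $v_{K,\{T_i\}}$ with slopes $\frac{T_i}{T_i+K}\xi_i$. The computation of Proposition \ref{pro3} bounds the energy by $\sum_i(a_i-a_{i-1})\bigl(f_{\rm hom}(\xi_i)+\eta+C(\frac K{T_i}+\eta_K)\max|V_{\rm per}|\bigr)$, which is in terms of $f_{\rm hom}(\xi_i)$ and not of the actual slopes. Since $v_{K,\{T_i\}}\to v$ in $H^1$ as $T_i\to\infty$, the diagonal argument you already invoke (in $T_i$, then $K$ and $\eta$) closes the proof with no continuity of $f_{\rm hom}$.

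Two smaller points:
\begin{itemize}
\item Once the wait ends at a stopping time, the concatenation is no longer at a deterministic time. You do therefore need the strong-Markov content of Proposition \ref{concat}, which is available.
\item When the truncation at $K$ occurs (probability up to $\eta_K$), the entire following piece runs with a misaligned lattice shift, because the corrector's internal realignments are made in its own frame. That error is of order $\eta_K\max|V_{\rm per}|(a_i-a_{i-1})$ and vanishes as $K\to\infty$, not as $\e\to0$. For the same reason, your alternative of using \eqref{tre-b} alone cannot absorb the initial shift.
\end{itemize}
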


\begin{proof} We define $a=a_0<a_1<\dots < a_m=b$ and $\xi_1,\ldots,\xi_m$ such that $v'(t)=\xi_i$ in $(a_{i-1}, a_i)$. It is not restrictive to suppose that $\tfrac{a_{i}}\e>\tfrac{a_{i-1}}\e+K$ for all $i\in\{1,\ldots,m\}$.
Note that, up to a translation, it is not restrictive to suppose that $v(t)=\xi_1t$ in the first interval. 

With fixed $\eta>0$, $T_0$ and $K$ we define $T_i>T_0$ and the correctors $\mathcal F^{\xi_i}_t$, $B^{\xi_i}_t$ and $w^{\xi_i}$ as in Proposition \ref{correct} for $i\in\{1,\ldots,m\}$. It is not restrictive to suppose that the filtrations  $\mathcal F^{\xi_i}_t$ are mutually independent. For all $j$ we define 
$$
\widehat{\mathcal F}^j_t=\sigma\Big(\bigcup_{i=1}^j\mathcal F^{\xi_i}_t\Big).
$$

With fixed $\e>0$ we define $w^\e$, $\mathcal F^{\e}_t$, and $B^{\e}_t$ by a finite induction argument.
With fixed $r>0$ let $\tau^\e_1$ be the stopping time with respect to $\widehat{\mathcal F}^2_t$ defined as the smallest $t\ge a_1/\e$ such that
$$
{\rm dist}(B^{\xi_1}_t+w^{\xi_1}(\tfrac{a_1}\e)-w^{\xi_2}(0),\mathbb Z^d)\le r
$$
and set $\widetilde\tau^\e_1=\tau^\e_1\wedge(\tfrac{a_1}\e+K)$.
For $t\ge 0$ we set 
$$
w^\e_1(t)=w^{\xi_1}(t) \hbox{ and }B^{\e,1}_t=B^{\xi_1}_t.
$$

For $t\ge0$ we then let 
$$
w^\e_2(t)=w^{\xi_2}((t-\widetilde\tau^\e_1)^+)-w^{\xi_2}(0)+w^{\e,1}(t\wedge\widetilde\tau^\e_1), \hbox{ and }B^{\e,2}_t=B^{\xi_2}_{(t-\widetilde\tau^\e_1)^+}+ B^{\e, 1}_{t\wedge\widetilde\tau^\e_1}.
$$
and let
$\tau^\e_2$ be the stopping time  with respect to $\widehat{\mathcal F}^3_t$ defined as the smallest $t\ge a_2/\e$ such that
$$
{\rm dist}(B^{\e,2}_t+w^\e_2(\tfrac{a_2}\e)-w^{\xi_3}(0),\mathbb Z^d)\le r
$$
and set $\widetilde\tau^\e_2=\tau^\e_2\wedge(\tfrac{a_2}\e+K)$

This construction can then be iterated, by setting
for $t\ge0$  
$$
w^\e_i(t)=w^{\xi_i}((t-\widetilde\tau^\e_{i-1})^+)-w^{\xi_i}(0)+w^{\e,{i-1}}(t\wedge\widetilde\tau^\e_{i-1})\hbox{ and }B^{\e,i}_t=B^{\xi_i}_{(t-\widetilde\tau^\e_{i-1})^+}+ B^{\e, {i-1}}_{t\wedge\widetilde\tau^\e_{i-1}}.
$$
and letting
$\tau^\e_i$ be the stopping time  with respect to $\widehat{\mathcal F}^{i+1}_t$ defined as the smallest $t\ge a_i/\e$ such that
$$
{\rm dist}(B^{\e,i}_t+w^\e_i(\tfrac{a_2}\e)-w^{\xi_{i+1}}(0),\mathbb Z^d)\le r
$$
and set $\widetilde\tau^\e_i=\tau^\e_i\wedge(\tfrac{a_i}\e+K)$.
We finally set 
$$
w^\e(t):=w^\e_m(t), \ \ B^{\e}_t=B^{\e,m}_t,\ \hbox{ and }\mathcal F^\e_t=\widehat{\mathcal F}^{i+1}_t.
$$

If we consider $v_\e(t)= \e w^\e(t/\e)$, then $v_\e\to v_{K,\{T_i\}}$, the piecewise affine function with
$$
v'_{K,\{T_i\}}=  \frac{T_i}{T_i+K}\xi_i 
$$
on $(a_{i-1}, a_i)$.  After a diagonalization, the same computation as in Proposition \ref{pro3} gives the conclusion.
\end{proof}

\begin{corollary}\label{coro5}
The function $f_{\rm hom}\colon \mathbb R^d\to\mathbb R$ is convex.
\end{corollary}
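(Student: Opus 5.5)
The plan is to combine the lower bound (claim (i) of Theorem \ref{main}, already proved) with the upper bound for piecewise affine targets (Proposition \ref{pro5}). Fix $\xi_1,\xi_2\in\mathbb R^d$ and $\theta\in(0,1)$, set $\xi=\theta\xi_1+(1-\theta)\xi_2$, and work on $(a,b)=(0,1)$. Let $v$ be the piecewise affine function with $v(0)=0$, $\dot v=\xi_1$ on $(0,\theta)$ and $\dot v=\xi_2$ on $(\theta,1)$. Proposition \ref{pro5} gives $v_\e\to v$ in the sense of Definition \ref{defco}, together with filtrations and Brownian motions, such that
$$
\limsup_{\e\to0}F_\e(v_\e,\mathcal F^\e_t,B^\e_t)\le \theta f_{\rm hom}(\xi_1)+(1-\theta)f_{\rm hom}(\xi_2).
$$

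The key step is to compare this sequence with the affine function $u(t)=\xi t$, which has the same endpoint values as $v$. I would rerun the blow-up argument of the lower bound globally: rescale with $T_\e=1/\e$, $w_\e(s)=v_\e(\e s)/\e$ and $\widehat B_s=B^\e_{\e s}/\sqrt\e$. Since Lemma \ref{lemmada} only corrects the means at the endpoints, I would first apply it on $(0,1)$ to $z_\e=v_\e$. This gives $\widetilde z_\e$ with $\EE(\widetilde z_\e(0))=0$ and $\EE(\widetilde z_\e(1))=\EE(\widehat v(1))=v(1)=\xi$, at an energy cost of at most $\eta$. The rescaled triple is then an admissible competitor in \eqref{fomd} for the vector $\xi$ at time $T_\e=1/\e$, because
$$
\EE\big(w_\e(T_\e)-w_\e(0)\big)=\frac{\xi}{\e}=T_\e\,\xi .
$$
Note that $V_{\rm per}((\sqrt\e B+v)/\e)=V_{\rm per}(\widehat B+w)$, and the energy scales by exactly the factor $\e=1/T_\e$.

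This yields
$$
\frac1{T_\e}\inf\{\cdots\}\le \theta f_{\rm hom}(\xi_1)+(1-\theta)f_{\rm hom}(\xi_2)+\eta+o(1).
$$
The main obstacle is that $f_{\rm hom}$ is defined through a $\liminf$ in $T$, while this bound holds only along the particular sequence $T_\e=1/\e$. To get around this, I would observe that the argument works on any interval $(0,L)$ after rescaling, and that Proposition \ref{pro5} holds for every sequence $\e_k\to0$. Choosing $\e_k=1/T_k$ along a sequence $T_k\to+\infty$ therefore bounds $\frac1{T}\inf$ for every large $T$, hence along all sequences, and in particular it bounds the $\liminf$. If there is concern that Proposition \ref{pro5} as stated requires a subsequence, the construction there is explicit for each $\e$, so the bound holds for every small $\e$.

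Letting $\eta\to0$ gives $f_{\rm hom}(\theta\xi_1+(1-\theta)\xi_2)\le\theta f_{\rm hom}(\xi_1)+(1-\theta)f_{\rm hom}(\xi_2)$, which is convexity. As a byproduct, the same argument also shows that the $\liminf$ in \eqref{fomd} is actually a limit.
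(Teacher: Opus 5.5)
Your argument is correct, but it takes a more direct route than the paper. The paper makes the competitor oscillate. It takes $u_k$ with $\frac1k$-periodic derivative alternating between $\xi_1$ and $\xi_2$, applies Proposition~\ref{pro5} to each $u_k$, and uses a diagonal argument to extract a sequence converging in the sense of Definition~\ref{defco} to the affine function $\xi t$. It then applies the lower bound of Theorem~\ref{main}(i), i.e.\ the blow-up argument, to this sequence. You instead keep a single two-slope target and replace the local blow-up by one global rescaling at scale $T=1/\e$. Lemma~\ref{lemmada} is used only to fix the endpoint means. This works because membership in $X_0^T(\xi)$ only involves $\EE(w(T)-w(0))$, so the slopes in between are irrelevant.

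What each route buys:
your route is shorter. It avoids the second diagonal argument (in the subsequence-based weak convergence of Definition~\ref{defco}) and the measure-theoretic blow-up, and it also yields that the $\liminf$ in \eqref{fomd} is a limit. The paper's route is the structural $\Gamma$-convergence argument: a $\Gamma$-limit is lower semicontinuous along oscillating sequences, which forces convexity of the integrand. It does not use the particular form of the cell formula.

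The step you call the main obstacle is not one for convexity. Write $m(T)$ for $\frac1T\inf\{\cdots\}$ in \eqref{fomd}. Then $\liminf_{T\to+\infty}m(T)\le\liminf_k m(T_k)$ for every $T_k\to+\infty$. So the bound along $T_k=1/\e_k$ already suffices, taken along one subsequence on which $v_{\e_k}$ converges weakly (as Lemma~\ref{lemmada} requires).

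The argument over all sequences is needed only for your byproduct, and there it should be phrased with subsequences. The recovery sequences for affine targets (Section~\ref{up}) exist for every $\e_k\to0$. Hence every $T_k\to+\infty$ has a subsequence with $\limsup m(T_k)\le f_{\rm hom}(\xi)+\eta$, and so $\limsup_{T\to+\infty}m(T)\le f_{\rm hom}(\xi)$. Your claim that the bound holds ``for every small $\e$'' is not what the construction gives, and it is not needed.
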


\begin{proof} With fixed $\xi_1$, $\xi_2\in  \mathbb R^d$, $\lambda\in(0,1)$, for all $k\in\mathbb N$ we consider the piecewise-affine function $u_k\colon (0,1)\to \mathbb R^d$, with $u_k(0)=0$ and whose derivative is the $\frac1k$-periodic function equal to $\xi_1$ on $(0,\lambda/k)$ and $\xi_2$ on $(\lambda/k,1/k)$. By Proposition \ref{pro5} for all $k\in\mathbb N$ there exists $u_{k,\e}$ converging to $u_k$ in the sense of Definition {\rm\ref{defco}}, filtrations $\mathcal F^{k,\e}_t$, and Brownian motions $B^{k,\e}_t$ such that
$$
\limsup_{\e\to 0}F_\e(u_{k,\e},\mathcal F^{k,\e}_t,B^{k,\e}_t; (0,1))\le \lambda f_{\rm hom}(\xi_1)+(1-\lambda) f_{\rm hom} (\xi_2).
$$
Note that the right-hand side is independent of $k\in\mathbb N$.
Since $u_k$ converge weakly in $H^1((0,1);\mathbb R^d)$ to to the function $u(t)=\xi t$, with $\xi=  \lambda  \xi_1+(1-\lambda) \xi_2$,  by a diagonal argument we can select a sequence $\e_k\to0$ such that the corresponding $u_{k,\e_k}$ converge to $u$ in the sense of Definition \ref{defco}, and 
$$
\limsup_{k\to+\infty}F_\e(u_{k,\e_k},\mathcal F^{k,\e_k}_t,B^{k,\e_k}_t; (0,1))\le \lambda f_{\rm hom}(\xi_1)+(1-\lambda) f_{\rm hom} (\xi_2).
$$
Since by Theorem \ref{main}(i) we have
$$
\liminf_{k\to+\infty}F_\e(u_{k,\e_k},\mathcal F^{k,\e_k}_t,B^{k,\e_k}_t; (0,1))\ge f_{\rm hom}(\xi)
$$
we conclude that $ f_{\rm hom}(\xi)\le \lambda f_{\rm hom}(\xi_1)+(1-\lambda) f_{\rm hom} (\xi_2)$; that is, the claim.
\end{proof}

\begin{proof}[Proof of Theorem \rm \ref{main}(ii)] By the convexity and the upper bound of $f_{\rm hom}$, proved in Corollary \ref{coro5} and \eqref{13},  the functional $u\mapsto \int_a^bf_{\rm hom}(u')dt$ is strongly continuous in $H^1((a,b);\mathbb R^d)$. Hence, the conclusion follows from Proposition \ref{pro5} and the strong density of piecewise-affine functions, using the same diagonal argument as in the proof of Corollary \ref{coro5}.
\end{proof}

\section{A stability result for Lagrangians with defects}\label{defects}

\def\FB{\widetilde F}

We consider a bounded Borel function $V_0\colon\mathbb R^d\to [0,+\infty)$ such that $V_0(x)\to 0$ as $|x|\to+\infty$, and the perturbed functional
\begin{equation}\label{deffe-dif}
\FB_\e(v,\mathcal F_t,B_t)= \int_a^{b} \EE\Big(\frac12 |\dot v(t)|^2+ V_{\rm per} \Big(\frac{\sqrt\e B_t+ v(t)}\e\Big)+V_0 \Big(\frac{\sqrt\e B_t+ v(t)}\e\Big)\Big)dt
\end{equation}

Then the $\Gamma$-limit of the functionals $\FB_\e$ is the same as that obtained for the periodic case in Theorem \ref{main}. More precisely, we have the following result.

\begin{theorem}\label{main-defect} Let $\FB_\e$ be given by \eqref{deffe-dif}. Given $\e_k\to 0$, we have

\smallskip
{\rm(i)} For every $v\in H^1(a,b)$ and for every sequences $\mathcal F^k_t$,  $B^k_t$, and $v_k\to v$ such that 
$v_k(t)$ is $\mathcal F^k_t$-measurable, we have 
$$
F_{\rm hom} (v)\le\liminf_{k\to+\infty}\FB_{\e_k}(v_k, \mathcal F^k_t,B^k_t);
$$

\smallskip
{\rm(ii)} for every $v\in H^1(a,b)$ there exists sequences  $\mathcal F^k_t$,  $B^k_t$, and $v_k\to v$, such that 
$v_k(t)$ is $\mathcal F^k_t$-measurable,  such that
$$
F_{\rm hom} (v)\ge\limsup_{k\to+\infty}
\FB_{\e_k}(v_k, \mathcal F^k_t,B^k_t).
$$
\end{theorem}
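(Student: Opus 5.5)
Since $V_0\ge 0$, we have $\FB_\e\ge F_\e$ for every triplet $(v,\mathcal F_t,B_t)$. Claim (i) therefore follows at once from Theorem \ref{main}(i):
$$
\liminf_k \FB_{\e_k}(v_k,\mathcal F^k_t,B^k_t)\ge \liminf_k F_{\e_k}(v_k,\mathcal F^k_t,B^k_t)\ge F_{\rm hom}(v).
$$
All the work is in the upper bound (ii). The plan is to take the recovery sequences built in Section \ref{up} and show that the extra term
$$
R_\e:=\int_a^b \EE\Big(V_0\Big(\frac{\sqrt\e B^\e_t+v_\e(t)}{\e}\Big)\Big)dt
$$
tends to $0$ along them. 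Equivalently, the rescaled process $X_\e(t):=(\sqrt\e B^\e_t+v_\e(t))/\e$ should spend, in expectation, a vanishing fraction of the time in any fixed ball $B_R$.

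First I treat an affine target $v(t)=x+\xi t$ with the construction \eqref{veh}. Then $X_\e(t)=B^\xi_{t/\e}+w^\xi(t/\e)+\lfloor x/\e\rfloor$. Given $\sigma>0$, choose $R$ with $V_0\le\sigma$ outside $B_R$. This gives
$$
R_\e\le (b-a)\sigma+\max V_0\int_a^b P\big(|X_\e(t)|\le R\big)\,dt .
$$
The key step is to show $P(|X_\e(t)|\le R)\to0$ for a.e.\ $t\in(a,b)$, which suffices by dominated convergence. The mechanism is diffusive spreading. At time $t>0$ the Brownian part $B^\xi_{t/\e}$ is Gaussian with variance of order $t/\e\to\infty$. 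The shift $\lfloor x/\e\rfloor+w^\xi(t/\e)$ does not concentrate this mass on a fixed ball, because $B_t$ has a density that is uniformly small on bounded sets at large times.

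To make this rigorous, I would condition on the past up to time $s=t/\e-1/\e^{1/2}$, or, more simply, use an independent Brownian increment. The difficulty is that $w^\xi$ is adapted and may react to $B^\xi$, so $B^\xi_{t/\e}+w^\xi(t/\e)$ is not simply a Gaussian shifted by an independent quantity. This is the main obstacle, and I would try to avoid it rather than prove a general anti-concentration estimate for adapted controls.

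The first approach is to modify the recovery sequence so that the controls do not interact with the Brownian motion near the origin. The time during which $X_\e$ can be near the origin is $\e$-small in macroscopic units. So I would replace $w^\xi$ by a constant control $\xi$-drift on the random time set where $|X_\e|\le R'$, and show this costs little. A cleaner alternative is to use the lower-order Gaussian factor directly. The correctors satisfy the energy bound \eqref{tre-b}, so $\EE\int_0^{S}|\dot w^\xi|^2\le CS$. By Girsanov's theorem, the law of $B^\xi+w^\xi$ on a window $[s,s+h]$, given $\mathcal F^\xi_s$, is absolutely continuous with respect to Brownian motion started at $X(s)$, with relative entropy at most $\EE\int_s^{s+h}|\dot w^\xi|^2\le Ch$. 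Taking $h=\e^{-1/2}$ and using the entropy inequality $P(A)\le (H+\log 2)/\log(1/Q(A))$, we get
$$
P\big(|X(s+h)|\le R\big)\lesssim \frac{Ch+1}{\log\big(c\,h^{d/2}/R^d\big)} .
$$
This bound is not small, since $Ch$ grows. So I would instead use windows of fixed length $h$ and compare with the Gaussian occupation time. For a Brownian path with drift of bounded $L^2$ energy per unit time, the expected occupation time of $B_R$ during $[0,S]$ is $o(S)$ as $S\to\infty$. I would try to prove this by a Markov-type argument. Each visit to $B_{R}$ is followed, with probability bounded below (uniformly, using the energy bound on unit windows together with Girsanov), by an excursion to distance $L$. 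Returning from distance $L$ takes expected time growing with $L$ in $d\ge 2$ (and even in $d=1$ by null recurrence). This gives an occupation fraction at most $\theta(L)\to0$.

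For a general $v\in H^1$, I would repeat the argument with the piecewise affine construction of Proposition \ref{pro5}, which inherits the same energy bounds. I would then conclude by the density and diagonal argument used in the proof of Theorem \ref{main}(ii). Since $R_\e\to0$ along the recovery sequences, $\limsup\FB_{\e_k}=\limsup F_{\e_k}\le F_{\rm hom}(v)$.
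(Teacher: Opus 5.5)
Your part (i) is exactly the paper's argument. Part (ii), however, has a genuine gap. Everything rests on showing that $X_\e(t)=(\sqrt\e B^\e_t+v_\e(t))/\e$ spends a vanishing fraction of time in a fixed ball along the recovery sequences of Section~\ref{up}, and this is never established.

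The general principle you end up invoking is false. That principle says that Brownian motion plus an adapted $H^1$ drift with bounded energy per unit time occupies $B_R$ for time $o(S)$ on $[0,S]$. Take $w(0)=0$ and $\dot w(t)=-(B_t+w(t))$. Then $Y=B+w$ is an Ornstein--Uhlenbeck process and $\EE\int_0^S|\dot w|^2dt=\int_0^S\EE|Y_t|^2dt\le dS/2$. Yet $\frac1S\int_0^SP(|Y_t|\le R)\,dt$ converges to a positive constant.

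This is precisely where your excursion argument breaks. An adapted control can bring the process back from distance $L$ quickly at bounded average cost. Moreover, \eqref{tre-b} controls the energy only in expectation over long intervals, not conditionally on each unit window. So Girsanov gives no uniform lower bound on escape probabilities; your entropy estimate already fails, as you note yourself. The alternative of switching to a constant drift on the random set $\{|X_\e|\le R'\}$ is not carried out, and it would interfere with the estimates and the mean constraint of Proposition~\ref{correct}. Whether those specific correctors avoid the origin would need a separate argument exploiting their block structure, which you do not provide.

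The missing idea is that you need not analyze the recovery sequence at all. You can move it away from the defect by a small lattice translation, which $V_{\rm per}$ does not see.

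Take any $(v_k,\mathcal F^k_t,B^k_t)$ given by Theorem~\ref{main}(ii). Let $\mu^k$ be the time average over $(a,b)$ of the laws of $X_k(t)=(\sqrt{\e_k}B^k_t+v_k(t))/\e_k$; this is a probability measure on $\mathbb R^d$. Given $\sigma>0$, pick $L\in\mathbb N$ with $V_0\le\sigma$ outside $[-L,L]^d$, and $M\in\mathbb N$ with $M^{-d}\le\sigma$. The $M^d$ closed cubes $z+[-L,L]^d$, $z\in(2L+1)\{0,\dots,M-1\}^d\subset\mathbb Z^d$, are pairwise disjoint. Hence one of them, with center $z_k$, has $\mu^k$-measure at most $M^{-d}$.

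Set $\widetilde v_k=v_k-\e_k z_k$. Periodicity gives $F_{\e_k}(\widetilde v_k,\mathcal F^k_t,B^k_t)=F_{\e_k}(v_k,\mathcal F^k_t,B^k_t)$. Boundedness of $z_k$ gives $\widetilde v_k\to v$ with the same filtrations and Brownian motions. Finally,
\[
\int_a^b\EE\Big(V_0\big(X_k(t)-z_k\big)\Big)\,dt\le (b-a)\big(\sigma+M^{-d}\sup V_0\big)\le (b-a)\,\sigma\,(1+\sup V_0).
\]
Letting $\sigma\to0$ slowly enough as $k\to\infty$ that still $\e_k z_k\to0$, you obtain (ii).

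This is essentially the paper's proof. It is a deterministic pigeonhole argument that works for any recovery sequence and needs no anti-concentration estimate for controlled diffusions.
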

\begin{proof} Since $V_0\ge 0$ (i) follows from the analogous property in Theorem \ref{main}.

On order to prove (ii), we fix $\e_k$, $v_k$, $B^k_t$ and $\mathcal F^k_t$ with $v_k$ converging to some $v$. For every $s\ge 0$, let $\mu^k_s$ be the probability law of $ 
(\sqrt{\e_k}B^k_{t}+v_k(s))/\e_k$;
that is, for every Borel set $A$ in $\mathbb R$ 
$$
\mu^k_t(A):= P\Big(\Big\{\frac{\sqrt{\e_k}B^k_{t}+v_k(s)}{\e_k}\in A\Big\}\Big).
$$
We also introduce the probability measure defined by
$$
\mu^k(A):= \frac{1}{b-a}\int_{a}^{b}\mu^k_t(A)dt.
$$

We now fix {\color{red} $\eta>0$}, and $M\in\mathbb N$ such that $M^{-d}\le\eta$. Since $\mu^k([-ML,ML]^d)\le 1$, 
there exists $z_k\in\mathbb Z^d\cap[-ML,ML]^d$ such that
$$
\mu^k(z_k+[-L,L]^d)\le \frac1{M^d}\le\eta.
$$
We can then use as recovery sequence the same $B^k_t$ and $\mathcal F^k_t$, and the function $\widetilde v_k(t)= v_k(t)-\e_k z_k$, obtained by translating the function $v_k$. We note that $\widetilde v_k$ still converges to $v$ as $k\to +\infty$ since $z_k$ is bounded, and that 
$$
V_{\rm per} \Big(\frac{\sqrt{\e_k} B^k_t+ v_k(t)}{\e_k}-z_k\Big)=V_{\rm per} \Big(\frac{\sqrt{\e_k} B^k_t+ v_k(t)}\e\Big)
$$
by the periodicity of $V_{\rm per} $. Hence,
\begin{eqnarray*}
\FB_{\e_k}(\widetilde v_k,\mathcal F^k_t,B^k_t)=  F_{\e_k}(\widetilde v_k,\mathcal F^k_t,B^k_t)+\int_a^{b} \EE\Big(V_0 \Big(\frac{\sqrt{\e_k} B^k_t+ v_k(t)}{\e_k}-z_k\Big)\Big)dt
\end{eqnarray*}  
We now observe that
\begin{eqnarray*}
\int_a^{b} \EE\Big(V_0 \Big(\frac{\sqrt{\e_k} B^k_t+ v_k(t)}{\e_k}-z_k\Big)\Big)dt
\le (b-a)\eta \sup V_0.
\end{eqnarray*}  
Hence, we have
$$
F_{\e_k}(v_k, \mathcal F^k_t,B^k_t)+ (b-a)\eta \sup V_0
\ge \FB_{\e_k}(\widetilde v_k, \mathcal F^k_t,B^k_t),
$$
which proves (ii) by choosing as $v_k$ as in the corresponding claim in Theorem \ref{main} {by the arbitrariness of $\eta$}.
\end{proof}

\section{Convergence in $(0,+\infty)$}\label{stab-hl}
We now consider the extension of the $\Gamma$-convergence result to some functionals defined on functions on the half line, in view of the applications to steady-state Hamilton-Jacobi equations.
\begin{definition}\label{Hlambda}
Let  $\lambda>0$ be a given parameter. The space
$H^1_\lambda((0,+\infty);\mathbb R^d)$ 
is defined as the space of
of all functions $u\colon(0,+\infty)\to\mathbb R^d$
 such that  $u\in H^1((0,t_0);\mathbb R^d)$ for all $t_0>0$,
and 
$$
\int_0^{+\infty} |u'(t)|^2  e^{-\lambda t}dt<+\infty,$$
endowed with the norm
$$
\Big(|u(0)|^2 +\int_0^{+\infty} |u'(t)|^2  e^{-\lambda t}dt\Big)^{1/2}.
$$
Similarly, the space
$H^1_\lambda((0,+\infty);L^2(\Omega;\mathbb R^d))$ is defined as the space of
of all functions $u\colon(0,+\infty)\to L^2(\Omega;\mathbb R^d)$,
 such that $u\in H^1((0,t_0);L^2(\Omega;\mathbb R^d))$ for all $t_0>0$,
and 
$$
 \int_0^{+\infty} \EE(|u'(t)|^2)  e^{-\lambda t}dt<+\infty,
$$
endowed with the norm
$$
\Big(\EE (|u(0)|^2) +\int_0^{+\infty} \EE(|u'(t)|^2)  e^{-\lambda t}dt\Big)^{1/2}.
$$
We observe that $H^1_\lambda((0,+\infty);\mathbb R^d)$ and $H^1_\lambda((0,+\infty);L^2(\Omega;\mathbb R^d))$ are {Hilbert} spaces. 
\end{definition}

\begin{definition}[convergence to a deterministic variable on $(0,+\infty)$]\label{defcoinfty}
Let $\lambda>0$ be fixed. Given a sequence $v_k\in H^1_\lambda((0,+\infty);L^2(\Omega;\mathbb R^d))$ and $v\in H^1_\lambda((0,+\infty);\mathbb R^d)$ we write $v_k\to v$ if 
for all subsequences $v_{k_j}$, there exist a subsequence, that we still denote by $v_{k_j}$, 
and $\widehat v\in H^1_\lambda((0,+\infty);L^2(\Omega;\mathbb R^d))$, possibly depending on the subsequence, such that 
$$
 v_{k_j}\wto \widehat v \hbox{ weakly in }H^1_\lambda((0,+\infty);L^2(\Omega;\mathbb R^d))
\quad\hbox{ and }\quad
\EE(\widehat v(t))=v(t)\hbox{ for every }t.
$$
\end{definition}

\subsection{Homogenization on the half line}
Given $\lambda>0$, for all $\e>0$, for all $v\in H^1_\lambda((0,+\infty);L^2(\Omega;\mathbb R^d))$, $\{\mathcal F_t\}_{t\ge 0}$ filtrations such that $v(t)$ is measurable with respect to $\mathcal F_t$ for every $t$, and  $\{B_t\}_{t\ge 0}$ Brownian motions adapted to $\{\mathcal F_t\}_{t\ge 0}$,
we set
\begin{equation}\label{Fle}
F^\lambda_\e(v,\mathcal F_t,B_t)= \int_0^{+\infty} \EE\Big(\frac12 |\dot v(t)|^2+ V_{\rm per} \Big(\frac{\sqrt\e B_t+ v(t)}\e\Big)\Big)\, e^{-\lambda t}dt.
\end{equation}
For $v\in H^1_\lambda((0,+\infty);\mathbb R^d)$, we also define
\begin{equation}\label{Flom}
F_{\rm hom}^\lambda(v)=\int_0^{+\infty} f_{\rm hom}(\dot v(t))\,e^{-\lambda t}dt.
\end{equation}
We will prove the following homogenization result, whose stability with respect to perturbations can be proved using the assumptions and the results in Section \ref{defects}.

\begin{theorem}\label{main2} Given $\lambda>0$, let $F^\lambda_\e$ and $F_{\rm hom}^\lambda$ be given by \eqref{Fle} and \eqref{Flom}, respectively. Given $\e_k\to 0$, for every $v\in H^1_\lambda((0,+\infty);\mathbb R^d)$ we have

\smallskip
{\rm(i)}  for every sequence of functions $v_k\in H^1_\lambda((0,+\infty);L^2(\Omega;\mathbb R^d))$ with  $v_k\to v$ in the sense of Definition {\rm\ref{defcoinfty}},   for every sequence $\{\mathcal F^k_t\}_{t\ge 0}$ of filtrations such that each $v_k(t)$ is $\mathcal F^k_t$-measurable, and  for every sequence $\{B^k_t\}_{t\ge 0}$ of Brownian motions adapted to $\{\mathcal F^k_t\}_{t\ge 0}$,
we have 
$$
F^\lambda_{\rm hom} (v)\le\liminf_{k\to+\infty}F^\lambda_{\e_k}(v_k, \mathcal F^k_t,B^k_t);
$$

\smallskip
{\rm(ii)} there exist a sequence of functions $v_k\in H^1_\lambda((0,+\infty);L^2(\Omega;\mathbb R^d))$ with  $v_k\to v$ in the sense of Definition {\rm\ref{defcoinfty}},  a sequence $\{\mathcal F^k_t\}_{t\ge 0}$ of filtrations such that each $v_k(t)$ is $\mathcal F^k_t$-measurable, and a sequence $\{B^k_t\}_{t\ge 0}$ of Brownian motions adapted to $\{\mathcal F^k_t\}_{t\ge 0}$, such that 
$$
F^\lambda_{\rm hom} (v)\ge\limsup_{k\to+\infty}F^\lambda_{\e_k}(v_k, \mathcal F^k_t,B^k_t).
$$
\end{theorem}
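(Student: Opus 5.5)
The plan is to reduce both claims to Theorem \ref{main} on bounded intervals, using the integrability of $e^{-\lambda t}$ and the uniform bounds $|V_{\rm per}|\le \max|V_{\rm per}|$ and \eqref{13}. The weight is handled by slicing $(0,+\infty)$ into short intervals on which $e^{-\lambda t}$ is almost constant. Tails are controlled because the potential term contributes at most $\max|V_{\rm per}|\,e^{-\lambda R}/\lambda$ beyond time $R$, and the kinetic term is nonnegative.

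\emph{Lower bound.} Take $v_k\to v$ in the sense of Definition \ref{defcoinfty}. Restricting $v_k$ to a bounded interval $(a,b)\subset(0,+\infty)$, weak convergence in $H^1_\lambda((0,+\infty);L^2(\Omega;\mathbb R^d))$ gives weak convergence in $H^1((a,b);L^2(\Omega;\mathbb R^d))$, since the weight is bounded below on $(a,b)$. Hence $v_k\to v$ on $(a,b)$ in the sense of Definition \ref{defco}. Fix $R>0$ and a partition $0=a_0<a_1<\dots<a_m=R$ with mesh $\delta$. Since the integrand is nonnegative up to the constant $\max|V_{\rm per}|$, we have
$$F^\lambda_{\e_k}(v_k,\mathcal F^k_t,B^k_t)\ge \sum_i e^{-\lambda a_i}\Big(F_{\e_k}(v_k;(a_{i-1},a_i))+ C\delta\,\max|V_{\rm per}|\,(1-e^{\lambda\delta})\Big)-\frac{\max|V_{\rm per}|}{\lambda}e^{-\lambda R}.$$
The small error comes from replacing the weight by its value $e^{-\lambda a_i}$ at the right endpoint on the (possibly negative) potential part. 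Applying Theorem \ref{main}(i) on each $(a_{i-1},a_i)$ and then letting $\delta\to0$ and $R\to+\infty$ gives the claim. Here monotone convergence applies to $f_{\rm hom}+\max|V_{\rm per}|\ge0$; this lower bound on $f_{\rm hom}$ follows by testing with $w$ in \eqref{fomd}.

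\emph{Upper bound.} By density and the continuity of $F^\lambda_{\rm hom}$ in $H^1_\lambda$, the reduction is to $v$ piecewise affine with finitely many pieces and affine for $t\ge R$. Continuity of $F^\lambda_{\rm hom}$ follows from the convexity in Corollary \ref{coro5}, the quadratic bound \eqref{13}, and $f_{\rm hom}\ge-\max|V_{\rm per}|$ via dominated convergence. The density step truncates $\dot v$ at large times and uses piecewise constant approximations of $\dot v$ in $L^2_\lambda$. For such $v$ I would reuse the construction in Proposition \ref{pro5}, which is already defined on the whole half-line: the last corrector $w^{\xi_m}$ from Proposition \ref{correct} lives on $(0,+\infty)$. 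The energy bound of Proposition \ref{correct} is linear in $s_2-s_1$ up to a constant $C$, which after scaling becomes $\e C$. The gluing errors at the finitely many interfaces are also $O(\e)$. Hence, on every interval $(a_{i-1},a_i)$ and on the dyadic intervals $(R+n,R+n+1)$, the energy is at most the length times $(f_{\rm hom}(\xi_i)+\eta+o_K(1)+O(K/T))$ plus $O(\e)$. Summing with weights $e^{-\lambda t}$ converges uniformly because of the exponential decay. Compactness and identification of the limit follow from Proposition \ref{pro1} applied on each $(R+n,R+n+1)$. That proposition gives $\EE\int_n^{n+1}|\dot v_\e|^2\le M(|\xi|^2+1)$ uniformly, hence a bound in $H^1_\lambda$. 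Proposition \ref{pro2} then identifies $\EE(\widehat v)$ as the function with slopes $\tfrac{T_i}{T_i+K}\xi_i$. A diagonal argument in $T_i\to+\infty$ along the sequences realizing the $\liminf$ in \eqref{fomd}, then $K\to+\infty$ and $\eta\to0$, yields (ii). Definition \ref{defcoinfty} is metrizable on bounded sets, which justifies the diagonalization.

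The main obstacle is the uniformity on the unbounded interval. The constants in Proposition \ref{correct} and the bound of Proposition \ref{pro1} must hold with $\e$-independent constants on every unit interval, so that the weighted series converges and the $O(\e)$ errors do not accumulate. This works because the per-interval estimates are translation invariant, as the concatenation is stationary by construction. The weighted sum of $O(\e)$ terms over unit intervals is $O(\e/\lambda)$.
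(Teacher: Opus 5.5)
Your part (i) is the paper's argument: slice, freeze the weight at the right endpoint, and apply Theorem~\ref{main}(i) slice by slice. Your version is in fact more careful. Adding $\max|V_{\rm per}|$ to the integrand, working with a finite partition and bounding the tail explicitly handles the possibly negative potential, which the paper's infinite-sum version glosses over.

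Part (ii) follows a genuinely different route. The paper's argument runs as follows:
\begin{itemize}
\item it reduces by density to targets that are constant for large $t$;
\item it takes the recovery sequence of Theorem~\ref{main}(ii) on $(0,t_1)$ as a black box, fixes $\EE(v_k(t_1))$ with Lemma~\ref{lemmada}, and extends $v_k$ constantly after $t_1$, at a cost of at most $e^{-\lambda t_1}\max|V_{\rm per}|/\lambda$;
\item it gets the upper bound on each $(a_{j-1},a_j)$ for free, by combining (ii) on $(0,t_1)$ with (i) on the remaining subintervals, so no corrector estimate is reopened.
\end{itemize}
You instead run the construction of Proposition~\ref{pro5} on the whole half-line, using that \eqref{tre-b} and Proposition~\ref{pro1} are uniform in the position of the time window. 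This gives an explicit recovery sequence even for targets with nonzero asymptotic slope, at the price of extra bookkeeping.

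Two points in that bookkeeping need correcting, though neither breaks the argument.
\begin{itemize}
\item The interface errors are not $O(\e)$. On the event of probability $<\eta_K$ where the stopping time is truncated, the offset of the next corrector from $\mathbb Z^d$ is never repaired, because its internal stopping times only align its own copies. So an error of up to $2\max|V_{\rm per}|$ per unit time persists on the whole remaining half-line. It is harmless only because it is $O(\eta_K)$ per unit length and so falls into your $o_K(1)$ term; likewise the $\lambda(r)$ error falls into $\eta$.
\item The weight must be frozen on intervals of length $\delta\to0$ everywhere, the tail included. If you bound the energy on the affine pieces and on the unit intervals $(R+n,R+n+1)$ and then freeze $e^{-\lambda t}$, you are left with an $\e$-independent Riemann-sum error. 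Each $\delta$-slice still carries only an additive $\e C$ from \eqref{tre-b}, so fine slicing costs nothing as $\e\to0$.
\end{itemize}
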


\begin{proof}
Claim (i). Let $\{a_j\}_{j\ge0}$ be an increasing sequence in $[0,+\infty)$ with $a_0=0$ and $a_j\to+\infty$ as $j\to+\infty$. Using the notation
\begin{equation}\label{deffeab}
F_\e(v,\mathcal F_t,B_t; (a,b))= \int_a^{b} \EE\Big(\frac12 |\dot v(t)|^2+ V_{\rm per} \Big(\frac{\sqrt\e B_t+ v(t)}\e\Big)\Big)dt
\end{equation}
and Theorem \ref{main}, we have
\begin{eqnarray*}
\liminf_{k\to+\infty}F^\lambda_{\e_k}(v_k, \mathcal F^k_t,B^k_t)
\ge \sum_{j\ge 1}\liminf_{k\to+\infty}F_{\e_k}(v_k, \mathcal F^k_t,B^k_t;(a_{j-1},a_j)) e^{-\lambda a_j}\\
\ge \sum_{j\ge 1}\int_{a_{j-1}}^{a_j}f_{\rm hom} (\dot v(t)) e^{-\lambda a_j}dt
\
\ge \sum_{j\ge 1}e^{-\lambda(a_j-a_{j-1})}\int_{a_{j-1}}^{a_j}f_{\rm hom} (\dot v(t)) e^{-\lambda t}dt\\
\ge \sup_{j\ge1}e^{-\lambda(a_j-a_{j-1})}\int_{0}^{+\infty}f_{\rm hom} (\dot v(t)) e^{-\lambda t}dt.
\end{eqnarray*}
Since $\sup_{j\ge1}e^{-\lambda(a_j-a_{j-1})}$ can be taken arbitrarily close to $1$, we have proved the claim.

\medskip
Claim (ii).  By a density argument (see Proposition 4.5 in \cite{MR5057984}), it is sufficient to consider  a target function $v\in H^1_\lambda((0,+\infty);\mathbb R^d)$ such that there exists $t_0$ such that $v(t)=0$ if $t\ge t_0$. We fix $t_1>t_0$. By Theorem \ref{main} with $(a,b)=(0,t_1)$, there exist $v_k$, $\{\mathcal F^k_t\}$, $\{B^k_t\}$ as in Claim (ii) of that theorem. By Lemma \ref{lemmada} we can assume that $\EE(v_k(0))=v(0)$ and $\EE(v_k(t_1))=v(t_1)=0$. The function $v_k$ is then extended by setting $v_k(t)=v_k(t_1)$ for $t\ge t_1$, and, recalling Definition \ref{Hlambda}, we have that $v_k\in H^1_\lambda((0,+\infty);L^2(\Omega))$, and $v_k\to v$ in the sense of  Definition {\rm\ref{defcoinfty}}. Let $N\in\mathbb N$, and set $a_j=jt_1/N$. We note that, using the validity of Claim (i) of Theorem \ref{main} for $(a,b)=(a_{j-1}, a_j)$,  and the validity of Claim (ii) of the same theorem with $(a,b)=(0, t_1)$ we can prove $v_k$ satisfies Claim (ii) of Theorem \ref{main} with $(a,b)=(a_{j-1}, a_j)$ for any $j\in\{1,\ldots, N\}$. We then have
\begin{eqnarray*}
&&\limsup_{k\to+\infty}F^\lambda_{\e_k}(v_k, \mathcal F^k_t,B^k_t)
\\
&\le& \sum_{j= 1}^N\limsup_{k\to+\infty}F_{\e_k}(v_k, \mathcal F^k_t,B^k_t;(a_{j-1},a_j)) e^{-\lambda a_{j-1}}
+\int_{t_1}^{+\infty} e^{-\lambda t}\max|V_{\rm per}|dt
\\
&\le& \sum_{j=1}^N\int_{a_{j-1}}^{a_j}f_{\rm hom} (\dot v(t)) e^{-\lambda a_{j-1}}dt +e^{-\lambda t_1}\frac{\max|V_{\rm per}|}\lambda
\\
&\le& e^{t_1/N} \int_{0}^{t_1}f_{\rm hom} (\dot v(t)) e^{-\lambda t}dt+e^{-\lambda t_1}\frac{\max|V_{\rm per}|}\lambda.
\end{eqnarray*}
By the arbitrariness of $N$ and $t_1>t_0$ we conclude the proof.
\end{proof}

\subsection{Convergence of value functions}
Given $\lambda>0$,  define
\begin{equation}\label{Uex}
U_\e(x)=\inf\big\{ F^\lambda_\e(v,\mathcal F_t,B_t): \EE(v(0))=x\big\},
\end{equation}
where the infimum is taken over all $v\in H^1_\lambda((0,+\infty);L^2(\Omega;\mathbb R^d))$, satisfying the initial condition, $\{\mathcal F_t\}_{t\ge 0}$ filtrations such that $v(t)$ is measurable with respect to $\mathcal F_t$ for every $t$, and  $\{B_t\}_{t\ge 0}$ Brownian motions adapted to $\{\mathcal F_t\}_{t\ge 0}$.
Similarly, we define
\begin{equation}
U_{\rm hom}(x)=\inf\big\{ F^\lambda_{\rm hom}(v): v\in H^1_\lambda((0,+\infty);\mathbb R^d), v(0)=x\big\},
\end{equation}

\begin{theorem} Given $\lambda$ and $x$ we have $U_\e(x)\to U_{\rm hom}(x)$ as $\e\to0$.
\end{theorem}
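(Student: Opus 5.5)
The proof follows the standard ``$\Gamma$-convergence plus equicoercivity implies convergence of minima'' scheme, using Theorem \ref{main2}. The initial condition $\EE(v(0))=x$ has to be carried along, and the constraint needs to be stable both under the convergence of Definition \ref{defcoinfty} and under the recovery construction.

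\emph{Upper bound.} Fix $\delta>0$ and choose $v\in H^1_\lambda((0,+\infty);\mathbb R^d)$ with $v(0)=x$ and $F^\lambda_{\rm hom}(v)\le U_{\rm hom}(x)+\delta$.
\begin{itemize}
\item By Theorem \ref{main2}(ii) there are $v_k,\mathcal F^k_t,B^k_t$ with $v_k\to v$ and $\limsup_k F^\lambda_{\e_k}(v_k,\mathcal F^k_t,B^k_t)\le F^\lambda_{\rm hom}(v)$.
\item The recovery sequence need not satisfy $\EE(v_k(0))=x$ exactly. In its construction, Lemma \ref{lemmada} was already used to impose $\EE(v_k(0))=v(0)$, so I would reuse that. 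Alternatively, set $\alpha_k=\EE(v_k(0))-x\to0$ and apply the same cut-off correction on $(0,\delta)$ as in that lemma. This changes the energy by at most $o(1)+C\delta$, because $V_{\rm per}$ is bounded and $e^{-\lambda t}\le1$.
\item Hence $\limsup_{\e\to0}U_\e(x)\le U_{\rm hom}(x)+C\delta$ along any sequence $\e_k\to0$.
\end{itemize}

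\emph{Lower bound.} Take $\e_k\to0$ realizing $\liminf U_{\e_k}(x)$, and almost minimizers $(v_k,\mathcal F^k_t,B^k_t)$ with $\EE(v_k(0))=x$ and $F^\lambda_{\e_k}(v_k,\ldots)\le U_{\e_k}(x)+1/k$.
\begin{itemize}
\item Comparing with $v\equiv x$ gives $U_\e(x)\le \max|V_{\rm per}|/\lambda$, so the energies are bounded.
\item Since $V_{\rm per}$ is bounded, this yields $\int_0^\infty\EE|\dot v_k|^2e^{-\lambda t}dt\le C$.
\item The $H^1_\lambda$ norm also involves $\EE|v_k(0)|^2$, which is not controlled by the energy; only the mean of $v_k(0)$ is fixed. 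This is the main obstacle. I would replace $v_k$ by $v_k-v_k(0)+x$. This keeps adaptedness, since $v_k(0)$ is $\mathcal F^k_0$-measurable, and keeps $\dot v_k$ and the mean at $0$. It changes the potential term, however.
\item Instead, I would exploit translation freedom pathwise. Subtracting $\e_k\lfloor (v_k(0)-x)/\e_k\rfloor$ leaves the $V_{\rm per}$ term unchanged by periodicity. The remaining random shift is bounded by $\e_k\sqrt d$ and is $\mathcal F^k_0$-measurable, so $\EE|v_k(0)|^2$ stays bounded and the mean becomes $x+O(\e_k)$. That error can then be fixed by a deterministic shift of size $O(\e_k)$ plus the cut-off of Lemma \ref{lemmada}, at cost $o(1)$.
\item After this normalization, $v_k$ is bounded in the Hilbert space $H^1_\lambda((0,+\infty);L^2(\Omega;\mathbb R^d))$. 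Up to a subsequence $v_k\wto\widehat v$, and setting $v=\EE\widehat v$ we get $v_k\to v$ in the sense of Definition \ref{defcoinfty}.
\item Since $\widehat v\mapsto\EE\widehat v(0)$ is weakly continuous, $v(0)=x$.
\item Theorem \ref{main2}(i) then gives $U_{\rm hom}(x)\le F^\lambda_{\rm hom}(v)\le\liminf_k U_{\e_k}(x)$.
\end{itemize}

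Combining the two bounds, with $\delta$ arbitrary, gives $U_\e(x)\to U_{\rm hom}(x)$ for every sequence $\e\to0$, hence for $\e\to0$. The delicate points are the compactness step, specifically controlling $v_k(0)$ through the periodic translation, and the $O(\e_k)$ correction of the mean in both directions.
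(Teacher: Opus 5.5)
Your proposal is correct and follows the same route as the paper. For the lower bound you take almost minimizers, bound them by $U_\e(x)\le\max|V_{\rm per}|/\lambda$, extract a weakly convergent subsequence and apply Theorem~\ref{main2}(i); for the upper bound you take the recovery sequence of Theorem~\ref{main2}(ii) and fix the mean at $t=0$ with Lemma~\ref{lemmada}.

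Your random lattice translation $v_k-\e_k\lfloor (v_k(0)-x)/\e_k\rfloor$ repairs a step the paper passes over. The paper asserts boundedness in $H^1_\lambda((0,+\infty);L^2(\Omega;\mathbb R^d))$, although the energy does not control $\EE(|v_k(0)|^2)$.

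Your subsequent correction of the mean is unnecessary: $\EE(\widetilde v_k(0))\to x$ already gives $v(0)=x$ by weak continuity. Also, a global deterministic $O(\e_k)$ shift would not be cheap, since it moves the fast variable by $O(1)$; only the localized cut-off is cheap.
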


\begin{proof} With fixed $\e_k$ converging to $0$, we choose all $v_k\in H^1_\lambda((0,+\infty);L^2(\Omega;\mathbb R^d))$, with $\EE(v_k(0))=x$, $\{\mathcal F^k_t\}_{t\ge 0}$ filtrations such that $v(t)$ is measurable with respect to $\mathcal F^k_t$ for every $t$, and  $\{B^k_t\}_{t\ge 0}$ Brownian motions adapted to $\{\mathcal F^k_t\}_{t\ge 0}$ such that 
$$
F^\lambda_{\e_k}(v_k,\mathcal F^k_t,B^k_t)\le U_{\e_k}(x)+\frac1k .
$$
Using the constant function $x$ as competitor in \eqref{Uex}, from \eqref{Fle} we obtain that $U_{\e_k}(x)$ is bounded uniformly. Again, using \eqref{Fle} we then obtain
$$
\int_0^{+\infty} \EE\Big(\frac12 |\dot v_k|^2+ V_{\rm per} \Big(\frac{\sqrt\e B^k_t+ v_k}\e\Big)\Big)\, e^{-\lambda t}dt\le C,
$$
from which we deduce that $v_k$ is bounded in $H^1_\lambda((0,+\infty);L^2(\Omega;\mathbb R^d))$, and hence we can suppose that it converges to a function $\widehat v$ weakly in  $H^1_\lambda((0,+\infty);L^2(\Omega;\mathbb R^d))$. This implies that $v_k\to v$ in the sense of  Definition {\rm\ref{defcoinfty}}, where $v(t):= \EE(\widehat v(t))$ for every $t\ge 0$, and $v(0)=\EE(\widehat v(0))=x$. From Theorem \ref{main2}(i) we have that 
\begin{equation}\label{lifi}
U_{\rm hom}(x)\le  F^\lambda_{\rm hom}(v)\le \liminf_{k\to+\infty} F^\lambda_{\e_k}(v_k,\mathcal F^k_t,B^k_t)\le \liminf_{k\to+\infty} U_{\e_k}(x).
\end{equation}

To prove 
\begin{equation}\label{lisu}
U_{\rm hom}(x)\ge \limsup_{k\to+\infty} U_{\e_k}(x),
\end{equation}
with fixed $\eta>0$ we consider  $v\in H^1_\lambda((0,+\infty);\mathbb R^d)$, with $v(0)=x$, such that
$$
U_{\rm hom}(x)+\eta \ge F^\lambda_{\rm hom}(v).
$$
Let $v_k$, $\{\mathcal F^k_t\}_{t\ge 0}$, and $\{B^k_t\}_{t\ge 0}$ be as in Theorem \ref{main2}(ii).
By Lemma \ref{lemmada} we can suppose that $\EE(v_k(0))=x$ for every $k$. Thus we have
$$
U_{\rm hom}(x)+\eta\ge F^\lambda_{\rm hom}(v) \ge\limsup_{k\to+\infty}F^\lambda_{\e_k}(v_k, \mathcal F^k_t,B^k_t)\ge \limsup_{k\to+\infty}U_{\e_k}(x).
$$
By the arbitrariness of $\eta>0$ we get \eqref{lisu}, which, together with \eqref{lifi}, proves the claim.
\end{proof}

\begin{remark}[addition of continuous perturbations]\rm
For  $v\in H^1_\lambda((0,+\infty);L^2(\Omega;\mathbb R^d))$ and $g\in H^1_\lambda((0,+\infty);\mathbb R^d)$, we set
\begin{equation}\label{Fle2}
G^\lambda(v)= \int_0^{+\infty} \EE(
\langle g(t),\dot v(t)\rangle)\, e^{-\lambda t}dt.
\end{equation}
We note that if $v_k\to v$ then
$$
\lim_{k\to+\infty}G^\lambda(v)= \int_0^{+\infty} 
\langle g(t),\dot v(t)\rangle\, e^{-\lambda t}dt.
$$

Indeed, if $v_k\wto \widehat v$ in $H^1_\lambda((0,+\infty);L^2(\Omega;\mathbb R^d))$ then $v_k\wto \widehat v$ in $L^2_\lambda((0,+\infty);L^2(\Omega;\mathbb R^d))$, so that  $G^\lambda(v_k)\to G^\lambda(\widehat v)$, from which we conclude since
$$
G^\lambda(\widehat v)=\int_0^{+\infty} 
\langle g(t),\dot v(t)\rangle\, e^{-\lambda t}dt.
$$

Given $\lambda>0$,  define
\begin{equation}\label{73}
U_\e(x)=\inf_{\mathcal F_t,  B_t}\big\{ F^\lambda_\e(v,\mathcal F_t,B_t) + G^\lambda (v):  \EE(v(0))=x\big\},
\end{equation}
where the infimum is taken over all $v\in H^1_\lambda((0,+\infty);L^2(\Omega;\mathbb R^d))$,  such that $v(t)$ is measurable with respect to $\mathcal F_t$ for every $t$ and satisfies the initial condition, as well as over all  filtrations $\{\mathcal F_t\}_{t\ge 0}$ of $\mathcal F$ and  over all  Brownian motions $\{B_t\}_{t\ge 0}$ adapted to $\{\mathcal F_t\}_{t\ge 0}$.
Similarly, we define
\begin{equation}\label{74}
U_{\rm hom}(x)=\inf\big\{ F^\lambda_{\rm hom}(v) +G^
\lambda(v): v\in H^1_\lambda((0,+\infty);\mathbb R^d), v(0)=x\big\}.
\end{equation}
We then have $U_\e(x)\to U_{\rm hom} (x)$. 
\end{remark}

\subsection{Characterization of the limit Hamiltonian and stability}
We take $g(t)$ equal to a constant $-p\in\mathbb R^d$ in the previous remark. In this case, $U_\e=U_{p,\e}$ is the solution to a problem of the form 
\eqref{Uex} with $\frac12|\dot v|^2-\langle p, \dot v\rangle$ in the place of $\frac12|\dot v|^2$ in the energy.   Correspondingly, $U_{\hom}(x)$ is the solution $U_p(x)$ of the limit problem with $f_{
\hom}(\dot v)-\langle p, \dot v\rangle$ in the place of $f_{\hom}(\dot v)$. Note that $U_p$ is the constant
$$
U_p=\frac1\lambda\min\{ f_{\rm hom}(\xi)-\langle p, \xi\rangle\}= -\frac1\lambda f_{\rm hom}^*(p) .
$$
Since, on the other hand, $U_p$ is also characterized as solving the limit Hamilton-Jacobi equation \eqref{Homp}, we deduce that  $H_{\rm hom}=f_{\rm hom}^*$.

As a consequence of the convergence result for Hamilton--Jacobi equations in terms of viscosity solutions \cite{MR1159184}, we deduce that, for any fixed uniformly continuous and bounded $\psi$ we also have that the value functions
$$
U_{\e}(x)=\inf\Big\{
\int_0^{+\infty} \EE\Big(\frac12 |\dot v(t)|^2+ V_{\rm per} \Big(\frac{\sqrt\e B_t+ v(t)}\e\Big)+\psi(\sqrt\e B_t+ v(t))\Big)\, e^{-\lambda t}dt:  v(0)=x\Big\}
$$
converge to the value function
\begin{equation}\label{Uom}
U_{\rm hom}(x)=\inf\Big\{
\int_0^{+\infty} (f_{\rm hom}(\dot v(t))+\psi(v(t))\Big)\, e^{-\lambda t}dt:  v(0)=x\Big\}.
\end{equation}

\begin{remark}[stability for defects]
\rm From the arguments in Section \ref{defects}, we can conclude that, if $V_0$ is as in that section
and for any fixed uniformly continuous and bounded $\psi$, we also have that the value functions
\begin{eqnarray*}
&&U^0_{\e}(x)=\inf\Big\{
\int_0^{+\infty} \EE\Big(\frac12 |\dot v(t)|^2+ V_{\rm per} \Big(\frac{\sqrt\e B_t+ v(t)}\e\Big)\\
&&
\hskip3cm+ V_0\Big(\frac{\sqrt\e B_t+ v(t)}\e\Big)+\psi(\sqrt\e B_t+ v(t))\Big)\, e^{-\lambda t}dt:  v(0)=x\Big\}
\end{eqnarray*}
converge to the value function $U_{\rm hom}$ for the unperturbed case in \eqref{Uom}.

Indeed, for a fixed sequence $\e_k$, we can apply the arguments in the proof of Theorem \ref{main-defect} to a sequence of almost minimizers $v_k$, $B^k$ for $U^0_{\e_k}$, upon defining $\mu^k$ as
$$
\mu^k(A):= \frac{1}{\lambda}\int_{0}^{+\infty}\mu^k_t(A)e^{-\lambda t}dt.
$$

This gives a variational proof of a convergence result for $U^0_\e$ by P.L.~Lions \cite{PLL-college}.
\end{remark}

\section{Related settings}
We briefly describe how our proof could cover some more general Hamilton--Jacobi equations, and outline some possible extensions. Although definite conclusions are yet to be obtained on the various settings we mention here, we believe that bringing up possible connections of our arguments and results with these other {research lines might shed further light on} the mechanisms at play in our study and {we hope it} may motivate further research.
 
 \smallskip
We first consider the case of non-constant viscosity.
Throughout our work, $\sqrt\e B_t+v(t)$ stands for the solution to the stochastic differential equation
\begin{equation}
\label{eq:dXt-bis}
dX^\varepsilon_t\,=\,\alpha(t)\,dt\,+\,\sqrt\e\,dB_t,
\end{equation}
with $\dot v=\alpha$, starting at $X^\varepsilon_0=x$ at initial time~$t=0$. Equation \eqref{eq:dXt-bis} is the random trajectory to be considered in the stochastic-control interpretation of the Hamilton-Jacobi equation~\eqref{11}. 
We believe that our analysis carries over, \emph{mutatis mutandis}, to the case of the random trajectory solution to
 \begin{equation}
  \label{eq:dXt-sigma-not-1-bis}
dX^\varepsilon_t\,=\,\alpha(t)\,dt\,+\,\sqrt{\varepsilon}\,\sigma_{\rm per}\left(\frac{X^\varepsilon_t}{\varepsilon}\right)\,dB_t,
\end{equation}
for a scalar-valued periodic function~$\sigma_{\rm per}$, which we assume to be bounded, bounded away from zero, and sufficiently regular. The case of a suitable, matrix-valued~$\sigma_{\rm per}$ could likewise be considered, but we leave aside this further extension. Equation~\eqref{eq:dXt-sigma-not-1-bis} is an immediate generalization to~\eqref{eq:dXt-bis}, but also a non-trivial extension since, in sharp contrast to~\eqref{eq:dXt-bis}, the equation is implicit in~$X^\varepsilon_t$ and its solution cannot be simply expressed in terms of~$\alpha$ and $B_t$ as in the case \eqref{eq:dXt-bis}, where $X^\varepsilon_t=\int_0^t\alpha(s)ds+\sqrt\e B_t$. Equation~\eqref{eq:dXt-sigma-not-1-bis}  arises in the stochastic-control interpretation of the Hamilton-Jacobi equation
\begin{equation}
\label{eq:HJB-sigma-not-1-bis}
\lambda U_\varepsilon(x)\,-\,\frac12\,\varepsilon \,\sigma^2_{\rm per}\left(\frac{x}{\varepsilon}\right)\,\Delta U_\varepsilon(x) +\,H_{\rm per}\left(\frac{x}{\varepsilon},\nabla U_\varepsilon(x)\right)\,=\,\psi(x).
\end{equation}
 Our proof for~\eqref{eq:dXt-bis} may be adjusted to address the more general case~\eqref{eq:dXt-sigma-not-1-bis}, and to establish that the homogenized limit of~\eqref{eq:HJB-sigma-not-1-bis} takes the form~\eqref{12}, but now with the homogenized Hamiltonian~$H_{\rm hom}$ that is the Legendre transform of 
\begin{eqnarray}\label{fomd-sigma-not-1-bis}
f_{\rm hom}(\xi)=\liminf_{T\to+\infty} \frac1T \inf\bigg\{\int_0^T {\mathbb E}\Big(\frac12|\beta(t)|^2+V_{\rm per}(Y_t)\Big)dt \bigg\},
\end{eqnarray}
instead of~\eqref{fomma}. The infimum in~\eqref{fomd-sigma-not-1-bis} is taken, as for~\eqref{fomma} on the Brownian motion~$B_t$ and the filtration~$\mathcal F_t$, with all the necessary properties of adaptation described for~\eqref{fomma}, and (this is the new feature) over all the pairs~$(\beta(t),Y_t)$ that satisfy
\begin{equation}
dY_t\,=\,\beta(t)\,dt\,+\,\sigma_{\rm per}(Y_t)\,dB_t\quad\textrm{and}\quad{\mathbb E}\left(\frac{1}{T}\,\int_0^{T} \beta(s)\,ds\right)\,=\,\xi,
\end{equation}
where, of course, $\beta$ plays the role of $\dot w$ in our earlier setting.
The step of the proof that has essentially to be adjusted is the construction of the (almost) corrector.  
The concatenation of the trajectories at the stopping times is not performed in terms of the pair~$(w(t),B_t) $ but of the pair~$(\beta(t),Y_t)$. Notice indeed that our earlier construction, can itself be entirely expressed as  a concatenation of~$(\dot w(t),w(t)+B_t) $ and thus can be extended to our new setting.
Correspondingly, Proposition~\ref{apbrown} should be extended, this time using the properties of the partial differential equation~$-\frac{1}{2} \sigma_{\rm per}^2(x) \Delta u = 1 $, fully similar, in this respect, to those of~$-\frac{1}{2} \Delta u = 1 $, given our assumptions of non degeneracy of~$\sigma_{\rm per}$.

\smallskip
Another class of extensions we might consider for our work consists of general Hamiltonians~$H_{\rm per}(x,p)$ that are not quadratic in~$p$, or that do not even separately depend on the two variables~$p$ and~$x$, as was the case when~$H_{\rm per}(x,p)= \frac12|p|^2-V_{\rm per}(x)$. The latter Hamiltonian~$H_{\rm per}(x,p)$ being convex in~$p$, the oscillating Hamilton-Jacobi equation~\eqref{11} we consider derives from a stochastic control theory problem, and we use this property intensively in the present work. The running cost involved in this control problem and the drift within the random trajectories respectively write as~$L_{\rm per}(x,\alpha)=\frac12|\alpha|^2+V_{\rm per}(x)$ and~$b_{\rm per}(x,\alpha)=\alpha$. The general link between the Hamiltonian~$H_{\rm per}(x,p)$, the Lagrangian $L_{\rm per}(x,\alpha)$, and the drift $b_{\rm per}(x,\alpha)$ is given by the Legendre transformation
$$  H_{\rm per}(x,p)=\,\sup_{\alpha}\;\big\{\langle b_{\rm per}(x,\alpha),p\rangle-L_{\rm per}(x,\alpha)\big\}.$$
 Since our arguments proceed from $\Gamma$-convergence type arguments and are thus performed on the control-theoretic formulation of the problem, it is intuitive that other suitable running costs and drifts may be considered. Provided they enjoy the right analytic properties, our arguments will apply. Definite conclusions are yet to be obtained but we believe that, for instance, our arguments and results carry over to the case when~$b_{\rm per}(x,\alpha)$ is kept to be~$\alpha$ but~$L_{\rm per}(x,\alpha)$ is now 

$\bullet$  periodic and continuous in~$x$, with a modulus of continuity uniform in~$\alpha$, 

$\bullet$   there exist $c_1,c_2>0$ and $r>1$ such that $c_1|\alpha|^r\le L(x,\alpha)\le c_2(1+|\alpha|^r)$, and

$\bullet$   $L(x,\cdot)$ is convex for all $x$.

Considering more general drifts~$b_{\rm per}(x,\alpha)$ that may depend upon~$x$ (say, Lipschitz continuous in~$x$, uniformly in~$\alpha$) is likely to be more delicate. Likewise is the case when these extensions are combined with a diffusion~$\sigma_{\rm per}$ that is not taken constant any longer, as in the beginning of this section. {On the contrary, the stability result of Section \ref{defects} should extend unchanged to the setting of this section, since its proof only uses periodicity of $V_{\rm per}$ and non-negativity of $V_0$.}

\section{Appendix: properties of the Brownian motion}
As mentioned in the
Introduction, this appendix presents two classical results. The first result, namely
the following `concatenation property' is a standard result in the theory of stochastic processes. 
We provide a proof here for the convenience of the reader. 

  \begin{proposition}\label{concat}
 Consider $B^1_t$, $B^2_t$ two Brownian motions, valued in~${\mathbb R}^d$, adapted to two independent filtrations $\mathcal F^1_t$ and  $\mathcal F^2_t$,  and let $\tau$ be a stopping time for the filtration $\mathcal F^1_t$. Then the process
\begin{equation}
\widetilde{B}_t\,=\,\left\{
\begin{array}{cc}
B^1_t&\text{when}\quad t\leq\tau,  \\
 B^2_{t-\tau}\,+\,B^1_{\tau} &  \text{when}\quad t\geq\tau,
\end{array}
\right.
\end{equation}
is a Brownian motion, adapted to the filtration $\widetilde{\mathcal F}_t:=\sigma(\mathcal F^1_t,\mathcal F^2_t)$.
\end{proposition}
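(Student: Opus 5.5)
The plan is to identify $\widetilde B$ through its conditional characteristic functions. One caveat shapes the whole argument. For $s<t$ and $\tau<s$, the increment $\widetilde B_t-\widetilde B_s=B^2_{t-\tau}-B^2_{s-\tau}$ involves $B^2$ at times $\le s$. It is therefore in general \emph{not} independent of $\mathcal F^2_s\subset\widetilde{\mathcal F}_s$. So I will read the statement as: $\widetilde B$ is a standard Brownian motion, its value at time $t$ is $\widetilde{\mathcal F}_t$-measurable, and it is a Brownian motion with respect to the smaller filtration
$$
\mathcal G_t:=\mathcal F^1_t\vee\sigma\big(B^2_{(r-\tau)^+}:\ r\le t\big)\subset\widetilde{\mathcal F}_t .
$$
This is what the construction in Proposition~\ref{correct} actually uses.

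\emph{Step 1 (adaptedness).} Write $\widetilde B_t=B^1_{t\wedge\tau}+B^2_{(t-\tau)^+}$. The first term is $\mathcal F^1_t$-measurable because $B^1$ is continuous and adapted and $\tau$ is a stopping time. For the second term, $(t-\tau)^+$ is $\mathcal F^1_t$-measurable with values in $[0,t]$. The map $(r,\omega)\mapsto B^2_r(\omega)$ restricted to $[0,t]$ is $\mathcal B([0,t])\otimes\mathcal F^2_t$-measurable by continuity of paths. Composing the two gives $\sigma(\mathcal F^1_t,\mathcal F^2_t)$-measurability. Continuity of the paths of $\widetilde B$ and $\widetilde B_0=0$ are immediate.

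\emph{Step 2 (Gaussian independent increments).} Fix $s<t$ and $u\in\mathbb R^d$. The goal is
$$
\EE\big(e^{i\langle u,\widetilde B_t-\widetilde B_s\rangle}\,\big|\,\mathcal G_s\big)=e^{-|u|^2(t-s)/2},
$$
from which the Brownian property follows by the usual iteration over finitely many times. Factor the left-hand integrand as $X\cdot Y$, with
$$
X=e^{i\langle u,B^1_{t\wedge\tau}-B^1_{s\wedge\tau}\rangle},\qquad Y=e^{i\langle u,B^2_{(t-\tau)^+}-B^2_{(s-\tau)^+}\rangle}.
$$
First condition on $\mathcal H:=\mathcal F^1_\infty\vee\sigma(B^2_{(r-\tau)^+}:r\le s)\supset\mathcal G_s$. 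Since $\mathcal F^2$ is independent of $\mathcal F^1_\infty$, conditionally on $\mathcal F^1_\infty$ the time $\tau$ is frozen and $B^2$ is still a Brownian motion. Its increment over $[(s-\tau)^+,(t-\tau)^+]$ is then independent of its path up to time $(s-\tau)^+$. Formally, I would compute $\EE(Y\mathbf 1_{A_1}\mathbf 1_{A_2})$ for $A_1\in\mathcal F^1_\infty$ and $A_2$ generated by the stopped $B^2$-path, using Fubini on the product structure. This gives
$$
\EE(Y\mid\mathcal H)=e^{-|u|^2((t-\tau)^+-(s-\tau)^+)/2}.
$$
This quantity is $\mathcal F^1_t$-measurable, and $X$ is $\mathcal F^1_\infty$-measurable.

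\emph{Step 3 (reduction to optional stopping).} Combining the two factors leaves
$$
\EE(XY\mid\mathcal G_s)=e^{-|u|^2(t-s)/2}\,\EE\big(M_{t\wedge\tau}/M_{s\wedge\tau}\,\big|\,\mathcal G_s\big),\qquad M_r=e^{i\langle u,B^1_r\rangle+|u|^2r/2},
$$
since $(t\wedge\tau-s\wedge\tau)+((t-\tau)^+-(s-\tau)^+)=t-s$. The quantity $M_{t\wedge\tau}/M_{s\wedge\tau}$ is measurable with respect to $\mathcal F^1_t$, while $\mathcal G_s$ is generated by $\mathcal F^1_s$ together with $B^2$, and $B^2$ is independent of $\mathcal F^1_\infty$. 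Using this together with the fact that $\tau$ is an $\mathcal F^1$-stopping time, the conditional expectation reduces to $\EE(M_{t\wedge\tau}\mid\mathcal F^1_s)/M_{s\wedge\tau}$. This equals $1$ by optional stopping for the bounded-time stopped martingale $M$. This last decoupling is the delicate point: $\mathcal G_s$ mixes $\tau$ with $B^2$, so I would verify it on the $\pi$-system of sets $A_1\cap\{(B^2_{(r_i-\tau)^+})_i\in C\}$ with $A_1\in\mathcal F^1_s$, again via Fubini with $\omega_1$ fixed. I expect this step to be the main obstacle.
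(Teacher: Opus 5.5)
Your proof is correct, and it takes a genuinely different route from the paper.

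\textbf{The paper's route.} The paper argues by coupling. Using the deterministic continuous concatenation map $F((a,T),b)$, it writes $\widetilde B=F(U)$ and $B^1=F(V)$, where $U=((B^1_{\cdot\wedge\tau},\tau),B^2)$ and $V=((B^1_{\cdot\wedge\tau},\tau),B^1_{\tau+\cdot}-B^1_{\tau})$. Independence of the filtrations and the strong Markov property show that in both pairs the two components are independent, with the same marginals. Hence $U$ and $V$ have the same law on path space, and so do $\widetilde B$ and $B^1$. Adaptedness to $\widetilde{\mathcal F}_t$ is checked separately.

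Thus the paper proves exactly the first two parts of your reading: Brownian law and $\widetilde{\mathcal F}_t$-measurability. It never asserts independence of increments from $\widetilde{\mathcal F}_s$, and, as you correctly observe, that would be false.

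\textbf{What your route buys.} Your conditional characteristic function argument (Fubini in $B^2$ with $\tau$ frozen, then optional stopping for the exponential martingale of $B^1$) is longer but proves more. It shows that $\widetilde B$ is a Brownian motion relative to $\mathcal G_t$. A filtration-relative statement of this kind is what is needed to apply the strong Markov property again at the next stopping time when the concatenation is iterated in Proposition~\ref{correct}. Equality in law alone does not provide it. One caveat on your remark about that proposition: there $\mathcal G_t$ would also have to contain the $\tau$-shifted $\mathcal F^2$, because $w^{j+1}$ need not be a functional of $B^{j+1}$. Your Step 2 still works in that setting if $B^2$ is an $\mathcal F^2_t$-Brownian motion.

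\textbf{The step you flagged.} The decoupling you single out as the main obstacle is harmless. On $\{\tau\le s\}$ the ratio $M_{t\wedge\tau}/M_{s\wedge\tau}$ equals $1$. On $\{\tau>s\}$ every $B^2$-generator of $\mathcal G_s$ reduces to $B^2_0=0$, so $G\cap\{\tau>s\}\in\mathcal F^1_s$ for all $G\in\mathcal G_s$. Equivalently, in your Fubini computation the weight $\theta\mapsto P\big((B^2_{(r_i-\theta)^+})_i\in C\big)$, evaluated at $\theta=\tau$, depends only on $\tau\wedge s$. It is therefore $\mathcal F^1_s$-measurable, and the claim reduces to $\EE(M_{t\wedge\tau}\mid\mathcal F^1_s)=M_{s\wedge\tau}$.

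\textbf{A hypothesis to state.} Your use of the martingale property of $M$ requires $B^1$ to be an $\mathcal F^1_t$-Brownian motion, not merely adapted; the paper's appeal to the strong Markov property needs the same thing. Without it the conclusion fails: if $\mathcal F^1_t=\sigma(B^1_r:r\ge0)$ for all $t$, every $B^1$-measurable random time is a stopping time. You should state this hypothesis explicitly. Of $B^2$ you only use its law.
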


\begin{proof}
We are going to use a \emph{coupling method}. For this purpose, we first  introduce the concatenation mapping 
$F\colon \big(C\big({\mathbb R}_+,{\mathbb R}^d\big)\times{\mathbb R}_+\big)\times C_0\big({\mathbb R}_+,{\mathbb R}^d\big)\to C\big({\mathbb R}_+,{\mathbb R}^d\big)$,
where $C_0$ denotes continuous functions with value $0$ in $0$, defined by 
\begin{equation}
\label{eq:F}
F ((a,T),b) (t)\,
=\,\left\{
\begin{array}{cc}
a(t) &  \text{when} \quad t\,\leq \,T,  \\
b(t-T)\,+\,a(T) &  \text{when} \quad t\,\geq\,T.
\end{array}
\right.
\end{equation}
This function is deterministic, measurable and actually continuous. 
Note that $\widetilde{B}_t=F((B^1_t,\tau),B^2_t)$; that is, for $P$-almost every $\omega\in\Omega$ we have $\widetilde{B}_t(\omega)=F((B^1(\omega),\tau(\omega)) ,B^2(\omega))(t)$, where 
$B^i(\omega)(t)=B^i_t(\omega)$. From this, we can deduce that $\widetilde B_t$ is 
adapted to $\widetilde{\mathcal F}_t$. Note moreover that $\tau$ is also a stopping time for  $\widetilde{\mathcal F}_t$
\medskip

Let  $\{U_t\}_{t\ge 0}$ be the stochastic process with values in $(\mathbb R^d\times{\mathbb R}_+)\times \mathbb R^d$ defined by the pair
%We then note that the process $U_t$ defined by the pair
\begin{equation}
\label{eq:Ut}
U_t\,=\,\big((\widetilde B_{t \wedge\tau},\tau), \widetilde B_{t+\tau}-\widetilde B_{\tau}\big),
\end{equation}
and let $\{V_t\}_{t\ge 0}$ be the stochastic process with values in $(\mathbb R^d\times{\mathbb R}_+)\times \mathbb R^d$ defined by the pair
\begin{equation}
\label{eq:Vt}
V_t\,=\,\big((B^1_{t \wedge\tau},\tau), B^1_{t+\tau}- B^1_{\tau}\big).
\end{equation}
By well-known properties of the Brownian motions, the trajectories of $U_t$ and $V_t$  are almost surely elements of the functional space~$C\big({\mathbb R}_+,{\mathbb R}^d\times{\mathbb R}^d\big)$. Therefore, the stochastic processes $U_t$ and $V_t$ define two random functions with values in $\big(C\big({\mathbb R}_+,{\mathbb R}^d\big)\times{\mathbb R}_+\big)\times C\big({\mathbb R}_+,{\mathbb R}^d\big)$ denoted by $U$ and $V$.

Our claim $U_t\buildrel{\mathcal L}\over\sim V_t$ follows from the following three properties:

\smallskip
(i) $U_t$ also writes as
\begin{equation}
\label{eq:Ut-bis}
U_t\,=\big((B^1_{t \wedge\tau},\tau), B^2_t\big),\end{equation}
 respectively by definition of the first argument and because the second argument writes as
$\widetilde B_{t+\tau}-\widetilde B_{\tau} \,=\,(B^2_{t}\,+\,B^1_{\tau})-\,B^1_{\tau}\,=\,B^2_t$;

\smallskip(ii) by the strong Markov property, the second argument of $V_t$ is a Brownian motion~$W_t$, independent from $(B^1_{t\wedge\tau},\tau)$, so that
\begin{equation}
\label{eq:Vt-bis}
V_t\,=\, ((B^1_{t\wedge\tau},\tau), W_t);
\end{equation}

\smallskip (iii) the two arguments within each pair are independent from one another, so the joint law of each pair is the product of the laws of the arguments. Moreover, for every finite family $0\le t_1\le\cdots\le t_M$ by a similar argument we can see that the joint law in $\big((\mathbb R^d\times{\mathbb R}_+)\times \mathbb R^d\big)^M$ of 
$(U_{t_1},\ldots, U_{t_M})$ is the same as  the joint law  of $(V_{t_1},\ldots, V_{t_M})$.

\smallskip

We then deduce that the random maps $U$ and $V$ have the same law in $\big(C\big({\mathbb R}_+,{\mathbb R}^d\big)\times{\mathbb R}_+\big)\times C\big({\mathbb R}_+,{\mathbb R}^d\big)$.

\medskip

We finally remark that, since the concatenation mapping~$F$ is deterministic and continuous, the maps $F(U)$ and $F(V)$ have the same law in $C\big({\mathbb R}_+,{\mathbb R}^d\big)$. Since 
$$ \widetilde B \,=\,F(U)\quad \text{and}\quad B^1\,=\,F(V),$$
where $ \widetilde B $ and $B^1$ denote the continuous functions $t\mapsto  \widetilde B_t$ and $t\mapsto  B^1_t$, respectively, we conclude that $ \widetilde B$ and $ B^1$ have the same law in $C\big({\mathbb R}_+,{\mathbb R}^d\big)$. This implies that $\widetilde B$ is a Brownian motion.\end{proof}

The second property of the Brownian motion allows one to show that solutions to minimum problems can be extended until they `almost' reach a point in $\mathbb Z^d$, up to extending their domain in a controlled way, and up to a set of small probability. This shows that such solutions have a sort of almost-periodic behaviour, which can be used for the construction of correctors, as in the deterministic almost-periodic case.

\begin{proposition}\label{apbrown}
Given a Brownian motion $B_t$ on $\mathbb R^d$, a radius $r$, and a time length $T$, then there exists 
 $c(r,T)>0$, with $c(r,T)\to 0$ if $T\to+\infty$, such that
 $$
 P(\{\omega\in\Omega: {\rm dist}(B_t(\omega),\mathbb Z^d)\le r \hbox{ \rm for all } t\in [t_0,t_0+T]\}) \le c(r,T)
$$
 for each time $t_0$.
\end{proposition}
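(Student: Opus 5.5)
The idea is to reduce the event to one about Brownian increments, which are independent across disjoint time windows. Throughout I take $r<1/2$. This is the regime used in Proposition \ref{correct}, where $r$ is chosen small with $\lambda(r)<\eta/2$, and some restriction is needed anyway: for $r\ge\sqrt d/2$ the set $\{{\rm dist}(\cdot,\mathbb Z^d)\le r\}$ is all of $\mathbb R^d$ and the bound fails.

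\emph{Step 1 (connected components).} For $r<1/2$, the closed set $A_r=\{x:{\rm dist}(x,\mathbb Z^d)\le r\}$ is the disjoint union of the closed balls $\overline B(z,r)$, $z\in\mathbb Z^d$. Any two of these balls are at distance at least $1-2r>0$, so they are exactly the connected components of $A_r$. Fix $\omega$ in the event
$$E=\{{\rm dist}(B_t,\mathbb Z^d)\le r \hbox{ for all } t\in[t_0,t_0+T]\}.$$
The path $t\mapsto B_t(\omega)$ is continuous, so it maps the connected interval $[t_0,t_0+T]$ into a single component $\overline B(z,r)$. Consequently $|B_t-B_s|\le 2r$ for all $s,t\in[t_0,t_0+T]$.

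\emph{Step 2 (independent increments).} Let $n=\lfloor T\rfloor$ and consider the unit-length increments $X_i=B_{t_0+i}-B_{t_0+i-1}$ for $i=1,\dots,n$. By Step 1,
$$E\subset\bigcap_{i=1}^n\{|X_i|\le 2r\}.$$
The $X_i$ are independent standard Gaussian vectors in $\mathbb R^d$. Therefore
$$P(E)\le q^{\,n},\qquad q:=P(|N(0,I_d)|\le 2r)<1,$$
where $q<1$ because a Gaussian vector has positive mass outside any ball. We may then set $c(r,T):=q^{\lfloor T\rfloor}$. This bound is independent of $t_0$ and tends to $0$ as $T\to+\infty$, which proves the claim.

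\emph{Main obstacle.} The only genuinely delicate point is Step 1: we must check that continuity forces the path to stay in a single component. This is exactly where $r<1/2$ is used, and it should be stated explicitly as the standing assumption on the radius $r$. A more general statement would require only that the components of $A_r$ be bounded, which holds for every $r<\sqrt d/2$. In that case $2r$ is replaced by the diameter of a component, and the argument of Step 2 goes through unchanged.
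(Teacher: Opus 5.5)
For the event exactly as printed, your argument is correct when $r<1/2$. However, that inequality is evidently a misprint, and your proposal does not reach the actual content of the proposition.

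The event should be ${\rm dist}(x+B_t,\mathbb Z^d)>r$ for all $t\in[t_0,t_0+T]$, uniformly in the shift $x$. Two things in the paper show this. First, its proof bounds $\sup_{x\in\mathbb T^d}P(x+W_t\in E_r^c \hbox{ for all } t\in[0,T])$. Second, Proposition \ref{correct} needs $P(\tau_1>T+K)<\eta_K$, where $\tau_1$ is the first time after $T$ at which $B^1_t+w^1(T)-w^2(0)$ comes within distance $r$ of $\mathbb Z^d$. Your own remark that the printed version fails for $r\ge\sqrt d/2$, although no restriction on $r$ is imposed, was the warning sign; the corrected event is simply empty for such $r$.

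For the avoidance event your Steps 1--2 give nothing. For $d\ge2$ and small $r$, the set $\{{\rm dist}(\cdot,\mathbb Z^d)>r\}$ is connected and unbounded, so the path is not confined and no bound on increments follows. The missing idea is a recurrence estimate for Brownian motion on the torus, uniform in the starting point. Uniformity is essential because the shift $w^1(T)-w^2(0)$ is random. The paper obtains the estimate in three steps:
\begin{itemize}
\item it shows $U_r=\sup_x\mathbb E[\tau_x]<\infty$ for the hitting time $\tau_x$ of the ball $E_r\subset\mathbb T^d$, via $-\frac12\Delta u=1$ off $E_r$ and $u=0$ on $E_r$;
\item Markov's inequality gives $\sup_xP(\tau_x>T_0)\le U_r/T_0$;
\item iterating the Markov property with $T_0=2U_r$ yields the bound $2^{1-T/(2U_r)}$.
\end{itemize}

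Your unit-window idea can be repaired in an elementary way by sampling positions instead of increments. Let $N$ be a standard Gaussian vector in $\mathbb R^d$. Then $p:=\inf_{y\in\mathbb R^d}P({\rm dist}(y+N,\mathbb Z^d)\le r)\ge\min_{|c|\le\sqrt d/2}P(|N+c|\le r)>0$. Conditioning successively on $\mathcal F_{t_0+i-1}$ for $i=1,\dots,\lfloor T\rfloor$ gives $P({\rm dist}(x+B_t,\mathbb Z^d)>r \hbox{ for all } t\in[t_0,t_0+T])\le(1-p)^{\lfloor T\rfloor}$. This holds for every $\mathcal F_{t_0}$-measurable shift $x$ and needs no PDE.

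Finally, your closing remark is inaccurate. For $r\ge1/2$ adjacent balls $\overline B(z,r)$ touch, so $A_r$ is connected and unbounded; its components are bounded only for $r<1/2$.
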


\begin{proof}
The issue is related to the capacity of the ball on the periodic torus. 
We let $W_t$, with $W_0=0$, denote the projection of the Brownian motion $B_t$ onto the torus $\mathbb{T}^d =
\mathbb{R}^d / \mathbb{Z}^d$. 

We first fix a closed ball $E_r\subset \mathbb{T}^d$ of radius $r>0$ and, for $x\in E_r^c:=\mathbb{T}^d\setminus E$, we let
\begin{equation}
\label{eq:Appendix-tau}
\tau_x := \inf \{ t \geq 0 : x\,+\,W_t \in E_r \}
\end{equation}
denote the first time at which the Brownian enters $E_r$, which is a stopping time for the filtration generated by the Brownian. A simple application of the Feynman-Kac representation formula for a steady-state elliptic equation, with homogeneous Dirichlet boundary conditions on~$\partial E_r$ and periodic conditions on the torus, implies that the function~$u(x)=\mathbb{E}[\tau_x]$ solves on~$\mathbb{T}^d$ the equation
$$
\begin{cases}
-\frac{1}{2} \Delta u = 1 & x\in E_r^c, \\
u = 0 & x \in E_r.
\end{cases}
$$
We then have

(a) the solution~$u$ is bounded and we may define 
$U_r := \sup\limits_{x \in \mathbb{T}^d} u(x) < \infty$;

(b) this bound only depends on the ambient dimension~$d$ and the radius~$r$ of the ball~$E$. 

\smallskip
In the following we use the shorthand $\{\tau_x > T_0\}$ for $\{\omega\in\Omega: \tau_x(\omega) > T_0\}$, and similar notation, for subsets of $\Omega$.

\smallskip
For any time $T_0$, we have 
$u(x)\,=\,\mathbb{E}[\tau_x]\,\geq\,T_0\,
{P}(\{\tau_x > T_0\})$,
so that, upon taking the supremum over~$x \in \mathbb{T}^d$, and noting that 
$
P(\{\tau_x > T_0\}) = {P}(
\{ x\,+\,W_t \in E_r^c\ \hbox{ for all } t \in [0,T_0]\})$,
we obtain
\begin{equation}
\label{eq:Appendix-sup}
 \sup_{x \in \mathbb{T}^d}P(
 \{ x\,+\,W_t \in E_r^c\ \hbox{ for all } t \in [0,T_0]\})\leq \frac{U_r}{T_0}.
\end{equation}
Using the Markov property of the Brownian motion, we now iterate formula~\eqref{eq:Appendix-sup}. To this end, we indeed observe that the probability of not entering the ball~$E_r$ in, say, an interval of length
$2\,T_0$ is smaller than that of not entering it in the first
interval $[0,T_0]$, and then of still not entering~$E_r$ in $[T_0,2\,T_0]$ provided that
we take the sup of the latter over all the positions of the process at time $T_0$. Mathematically, this intuition amounts to the following observations. We first remark that 
\begin{eqnarray}
\label{eq:Appendix-1}
&&P(\{ x+W_t \in E_r^c \hbox{ for all }   t \in [0,2\,T_0]\})\nonumber\\
&&=P(\{x+W_t \in E_r^c  \hbox{ for all }  t \in [0,T_0]\}\cap
\{ x+W_t \in E_r^c\hbox{ for all }  t \in [T_0,2\,T_0]\})\nonumber\\
&&=P(\{x+W_t \in E_r^c  \hbox{ for all }  t \in [0,T_0]\}\nonumber\\
&&\hskip 1truecm\cap\{ x+W_{T_0}+(W_{s+T_0}-W_{T_0}) \in E_r^c \hbox{ for all }  s \in [0,T_0]\}).
\end{eqnarray}
We focus on the latter event
\begin{equation*}
A_2:=\{ x +W_{T_0}+(W_{s+T_0}-W_{T_0}) \in E_r^c  \hbox{ for all }  s \in [0,T_0]\}.
\end{equation*}
Using the Markov property, the increment~$W_{s+T_0}-W_{T_0}$ is a Brownian motion, which is denoted by~$\widetilde{W_s}$, that is independent from the original Brownian motion~$W_t$ on~$[0,T_0]$ and thus of the event \begin{equation*}
 A_1:=\{x+W_t \in E_r^c  \hbox{ for all }  t \in [0,T_0]\}.
 \end{equation*}
  In addition, writing the event
 \begin{equation*}
  A_2=\big\{ x+W_{T_0}+\widetilde{W_s} \in E_r^c\ \hbox{ for all }  s \in [0,T_0]\},
  \end{equation*}
again by the Markov property, we note that $A_2$ depends on the event $A_1$ through the position $y=x+W_{T_0}$. 

We therefore have 
\begin{equation}
\label{eq:Appendix-2}
P (A_1\cap A_2)= P (A_1)P (A_2| A_1)=P(A_1) P (A_2| x+W_{T_0} ).
\end{equation}
Now we have
\begin{eqnarray}
\label{eq:Appendix-3}
P (A_2| x\,+\,W_{T_0})
\leq\sup_{y\in{\mathbb T}^d}P(\{y+\widetilde{W_s} \in E_r^c \hbox{ for all }  s\in [0,T_0]\}).
\end{eqnarray}
Collecting~\eqref{eq:Appendix-1}, \eqref{eq:Appendix-2} and \eqref{eq:Appendix-3}, we obtain
\begin{eqnarray}
\label{eq:Appendix-4}
&&\hskip-1cm P( \{x+W_t \in E_r^c \hbox{ for all } t \in [0,2\,T_0]\})\leq P ( x+W_t \in E_r^c  \hbox{ for all } t \in [0,T_0]\})\times\nonumber\\
&&\hskip3cm\times \sup_{y\in{\mathbb T}^d} P \big(\{y+\widetilde{W_s} \in E_r^c\, \hbox{ for all }  s \in [0,T_0]\}\big).
\end{eqnarray}
Using~\eqref{eq:Appendix-sup} for each of the two factors in the right-hand side of \eqref{eq:Appendix-4}, we have
\begin{equation*}
 \sup_{x \in \mathbb{T}^d} P(\{ x +W_t \in E_r^c\ \hbox{ for all } t \in [0,2T_0]\})\leq \Big(\frac{U_r}{T_0}\Big)^2.
\end{equation*}
An iteration of this argument gives
\begin{equation}
\label{eq:Appendix-sup-k}
 \sup_{x \in \mathbb{T}^d}P(\{ x +W_t \in E_r^c\ \hbox{ for all } t \in [0,kT_0]\})\leq \Big(\frac{U_r}{T_0}\Big)^{k}.
\end{equation}
We finally consider an arbitrary time~$T$, which we may always assume larger than $2\,U_r$ since we are interested in the behavior of~$P(\{ x +W_t \in E_r^c\ \hbox{ for all } t \in [0,T]\})$ for $T$ large. Choosing $T_0=2\,U_r$ and iterating $k$ times where $k$ is the integer part~$ k\,=\,\lfloor\frac{T}{T_0}\rfloor$,  by~\eqref{eq:Appendix-sup-k} we obtain
\begin{equation*}
 \sup_{x \in \mathbb{T}^d}P( \{x+W_t \in E_r^c\, \hbox{ for all } t \in [0,k\,T_0]\})\leq \Big(\frac12\Big)^{\left\lfloor\frac{T}{T_0}\right\rfloor}\,\leq \, \Big(\frac12\Big)^{\frac{T}{T_0}-1},
\end{equation*}
thus, since $kT_0=\lfloor\frac{T}{T_0}\rfloor\,T_0\leq T$, that, \emph{a fortiori},
\begin{equation*}
 \sup_{x \in \mathbb{T}^d}P(\{x +W_t \in E_r^c\, \hbox{ for all } t \in [0,T]\})\,\leq \, \Big(\frac12\Big)^{\frac{T}{T_0}-1}.
\end{equation*}
It follows that 
\begin{equation}
\label{eq:Appendix-sup-exp}
 \sup_{x \in \mathbb{T}^d} P( \{x+W_t \in E_r^c\ \hbox{ for all } t \in [0,T]\})\,\leq \,2\,\exp{(-\lambda \,T)}
\end{equation}
for, say, the positive constant~$\lambda\,=\,\frac{1}{2\,U_r}\log{2}$. An optimal constant (the actual exponential rate) may be determined but~\eqref{eq:Appendix-sup-exp} is sufficient for our purposes in the main body of the text: the probability~$P( \{x+W_t \in E_r^c\ \hbox{ for all } t \in [0,T]\})$ is, uniformly in~$x\in{\mathbb T}^d$, getting exponentially small as time grows. This concludes the proof of our claim.
\end{proof}

\smallskip\noindent
{\bf Acknowledgements.} The authors thank  Jean-Fran\c cois Delmas for outlining the proof of Proposition \ref{concat}, and Mauro Mariani for the proof of Proposition \ref{apbrown}. The authors are grateful to Yves Achdou for reading through a preliminary version of this manuscript and suggesting many improvements.
The first author is member of GNAMPA, INdAM.
 This article is based on work supported by the National Research Project PRIN 2022J4FYNJ  ``Variational methods for stationary and evolution problems with singularities and interfaces"
 funded by the Italian Ministry of University and Research, and on Andrea Braides's part, by the Excellence Project MatMod@TOV (Grant Agreement No.~CUP\_E83C23000330006) of the
Department of Mathematics of the University of Rome Tor Vergata. The first two authors thank INRIA for the kind hospitality in Paris.


\begin{thebibliography}{10}

\setlength{\itemsep}{0pt}  % Azzera lo spazio tra gli elementi
    \setlength{\parsep}{0pt}   % Azzera lo spazio tra i paragrafi dello stesso elemento
   

\bibitem{zbMATH07755395}
Y.~Achdou and C.~Le~Bris.
\newblock Homogenization of some periodic {Hamilton}-{Jacobi} equations with
  defects.
\newblock {\em Commun. Partial Differ. Equations}, 48(6):944--986, 2023.

\bibitem{zbMATH08099540}
Y.~Achdou and C.~Le~Bris.
\newblock Homogenization of {Hamilton}-{Jacobi} equations with defects leading
  to stratified problems.
\newblock {\em NoDEA, Nonlinear Differ. Equ. Appl.}, 32(6):39, 2025.
\newblock Id/No 119.

\bibitem{MR1484411}
M.~Bardi and I.~Capuzzo~Dolcetta.
\newblock {\em Optimal {C}ontrol and {V}iscosity {S}olutions of
  {H}amilton-{J}acobi-{B}ellman equations}.
\newblock Systems \& Control: Foundations \& Applications. Birkh\"{a}user
  Boston, Inc., Boston, MA, 1997.

\bibitem{zbMATH06190335}
G.~Barles.
\newblock An introduction to the theory of viscosity solutions for first-order
  {Hamilton}-{Jacobi} equations and applications.
\newblock In {\em Hamilton-Jacobi equations: approximations, numerical analysis
  and applications. Based on the lectures of the CIME summer school, Cetraro,
  Italy, August 29--September 3, 2011}, pages 49--109. Berlin: Springer;
  Florence: Fondazione CIME, 2013.

\bibitem{zbMATH07814183}
G.~Barles and E.~Chasseigne.
\newblock {\em On modern approaches of {Hamilton}-{Jacobi} equations and
  control problems with discontinuities. {A} guide to theory, applications, and
  some open problems}, volume 104 of {\em Prog. Nonlinear Differ. Equ. Appl.}
\newblock Cham: Birkh{\"a}user, 2024.

\bibitem{MR5057984}
A.~Braides, G.~Dal~Maso, and C.~Le~Bris.
\newblock A variational approach to the stability in the homogenization of some
  {H}amilton-{J}acobi equations.
\newblock {\em SIAM J. Math. Anal.}, 58(2):1744--1771, 2026.

\bibitem{BMS}
A.~Braides, M.~Maslennikov, and L.~Sigalotti.
\newblock Homogenization by blow-up.
\newblock {\em Applicable Anal.}, 87:1341--1356, 2008.

\bibitem{zbMATH06921166}
P.~Cardaliaguet, C.~Le~Bris, and P.~E. Souganidis.
\newblock Perturbation problems in homogenization of {Hamilton}-{Jacobi}
  equations.
\newblock {\em J. Math. Pures Appl. (9)}, 117:221--262, 2018.

\bibitem{MR1159184}
L.~C. Evans.
\newblock Periodic homogenisation of certain fully nonlinear partial
  differential equations.
\newblock {\em Proc. Roy. Soc. Edinburgh Sect. A}, 120(3-4):245--265, 1992.

\bibitem{MR2179357}
W.~H. Fleming and H.~M. Soner.
\newblock {\em Controlled {M}arkov processes and viscosity solutions},
  volume~25 of {\em Stochastic Modelling and Applied Probability}.
\newblock Springer, New York, 2006.

\bibitem{FM}
I.~Fonseca and S.~M\"uller.
\newblock Quasiconvex integrands and lower semicontinuity in {$L^1$}.
\newblock {\em SIAM J. Math. Anal.}, 23:1081--1098, 1992.

\bibitem{PLL-college}
P.-L. Lions.
\newblock {\em {\'E}quations elliptiques ou paraboliques, et
  homog\'en\'eisation pr\'e\-ci\-s\'ee}, {L}ectures at {C}oll\`ege de {F}rance
  2013/14. {V}ideo recordings available at: {\tt
  www.college-de-france.fr/site/pierre-louis-lions}.

\bibitem{Lions-Sou}
P.-L. Lions and P.~E. Souganidis.
\newblock Homogenization of ``viscous'' {H}amilton-{J}acobi equations in
  stationary ergodic media.
\newblock {\em Comm. Partial Differential Equations}, 30(1-3):335--375, 2005.

\bibitem{zbMATH06042622}
N.~Touzi.
\newblock {\em Optimal stochastic control, stochastic target problems, and
  backward {SDE}.}, volume~29 of {\em Fields Inst. Monogr.}
\newblock New York, NY: Springer; Toronto: The Fields Institute for Research in
  the Mathematical Sciences, 2013.

\bibitem{zbMATH01325009}
J.~Yong and X.~Y. Zhou.
\newblock {\em Stochastic controls. {Hamiltonian} systems and {HJB} equations},
  volume~43 of {\em Appl. Math. (N.Y.)}.
\newblock New York, NY: Springer, 1999.

\end{thebibliography}
\end{document}